\newcommand{\C}{\ensuremath{\mathbb{C}}}
\newcommand{\R}{\ensuremath{\mathbb{R}}}
\newcommand{\N}{\ensuremath{\mathbb{N}}}
\newcommand{\sphere}{\ensuremath{\mathbb{S}}}
\renewcommand\epsilon\varepsilon
\theoremstyle{definition}
\newtheorem{thmA}{Theorem}
\newtheorem{corB}[thmA]{Corollary}
\newtheorem{thm}{Theorem}[section]
\newtheorem{dfn}[thm]{Definition}
\newtheorem{lem}[thm]{Lemma}
\newtheorem{prp}[thm]{Proposition}
\newtheorem{cor}[thm]{Corollary}
\newtheorem{rem}[thm]{Remark}
\author{Tim de Laat}
\thanks{TdL is a Postdoctoral Fellow of the Research Foundation -- Flanders (FWO), MM is partially supported by the Grant-in-Aid for Young Scientists (B), no.~25800033 from the JSPS, and MdlS is partially
supported by ANR grants GAMME, NEUMANN and OSQPI}
\address{Tim de Laat, 
\newline KU Leuven, Department of Mathematics
\newline Celestijnenlaan 200B -- Box 2400, B-3001 Leuven, Belgium}
\email{tim.delaat@wis.kuleuven.be}
\author{Masato Mimura}
\address{Masato Mimura
\newline Mathematical Institute, Tohoku University
\newline 980-8578, Sendai, Japan}
\email{mimura-mas@m.tohoku.ac.jp}
\author{Mikael de la Salle}
\address{Mikael de la Salle
\newline CNRS-ENS de Lyon, UMPA UMR 5669
\newline F-69364 Lyon cedex 7, France}
\email{mikael.de.la.salle@ens-lyon.fr}
\title[Strong property (T) and fixed point properties]{On strong property (T) and fixed point properties for Lie groups}
\begin{document}

\begin{abstract}
We consider certain strengthenings of property (T) relative to Banach spaces that are satisfied by high rank Lie groups. Let $X$ be a Banach space for which, for all $k$, the Banach--Mazur distance to a Hilbert space of all $k$-dimensional subspaces is bounded above by a power of $k$ strictly less than one half. We prove that every connected simple Lie group of sufficiently large real rank depending on $X$ has strong property (T) of Lafforgue with respect to $X$. As a consequence, we obtain that every continuous affine isometric action of such a high rank group (or a lattice in such a group) on $X$ has a fixed point. This result corroborates a conjecture of Bader, Furman, Gelander and Monod. For the special linear Lie groups, we also present a more direct approach to fixed point properties, or, more precisely, to the boundedness of quasi-cocycles. Without appealing to strong property (T), we prove that given a Banach space $X$ as above, every special linear group of sufficiently large rank satisfies the following property: every quasi-$1$-cocycle with values in an isometric representation on $X$ is bounded.
\end{abstract}

\maketitle

\section{Introduction and statement of the main results}\label{sec=introduction}
A locally compact group has property (T) if its trivial representation is isolated in the unitary dual of the group equipped with the Fell topology. This property was introduced in 1967 by Kazhdan in order to show that certain groups are finitely generated \cite{kazhdan}. Since then, property (T) has been a key ingredient in several striking results in different areas of mathematics. It is well known that connected higher rank simple Lie groups, i.e.~connected simple Lie groups with real rank at least $2$, and their lattices satisfy property (T).

This article deals with three strengthenings of property (T) relative to Banach spaces, namely Lafforgue's strong property (T), property (F$_X$) (for a Banach space $X$) of Bader, Furman, Gelander and Monod and property (FF$_X$), which was defined by the second-named author. We mainly consider the question whether high rank simple Lie groups satisfy these strengthenings with respect to certain natural classes of Banach spaces. In this article, we will only work with real Lie groups and real Banach spaces. A posteriori, all results also hold for complex Banach spaces by considering them as real ones.

The first version of property (T) with respect to a Banach space $X$, denoted by property (T$_X$), was given by Bader, Furman, Gelander and Monod in terms of representations having almost invariant vectors \cite{baderfurmangelandermonod} (see also Definition~\ref{def=ffpp}). The definition of Lafforgue's strong property (T) (see \cite{lafforguestrongt}, \cite{lafforguefastfourier}) is based on the characterization of property (T) in terms of the existence of a Kazhdan projection in the universal $C^{\ast}$-algebra $C^{\ast}(G)$ of $G$, i.e.~a self-adjoint idempotent $P$ such that for every unitary representation $\pi$ of $G$, the operator $\pi(P)$ is a projection onto the subspace of $\pi(G)$-invariant vectors. If we consider isometric representations on a Banach space $X$ instead of unitary representations, we obtain a version of property (T) with respect to $X$, which we denote by (T$^{\mathrm{proj}}_X$), where the superscript ``proj'' stands for ``projection''. Allowing the representations on $X$ to have small exponentional growth gives the definition of Lafforgue's strong property (T) with respect to $X$. We refer to Section \ref{sec=strongt} for details. Strong property (T) originated from Lafforgue's work on the Baum--Connes Conjecture.

In general, property (T$^{\mathrm{proj}}_X$) and property (T$^{\mathrm{strong}}_X$) are strictly stronger than property (T$_X$). To see this, note that if $\pi$ is a representation of $G$ on a Banach space $X$ such that the representation $\overline \pi \colon G \to B(X/X^{\pi(G)})$ has almost invariant vectors, then for every measure $m$ on $G$ with $\int 1 dm=1$, we have $\| \overline \pi(m)\|_{B(X/X^{\pi(G)})} \geq 1$. In particular, $\pi(m)$ is at distance at least $1$ from any projection onto the space $X^{\pi(G)}$ of invariant vectors. However, for a superreflexive Banach space $X$, property (T$_X$) is equivalent to a ``non-uniform version'' of property (T$^{\mathrm{proj}}_X$)  \cite{delaatdelasalle2} (see also \cite{drutunowak}).

Our first result states that for a large class $\mathcal{E}$ of Banach spaces, every connected simple Lie group of sufficiently large real rank has strong property (T) with respect to the Banach spaces in $\mathcal{E}$. Let us make this statement precise. For a Banach space $X$, we consider the sequence
\[
  d_k(X) = \sup\{d(E,\ell^2_{\dim E}) \mid E \subset X, \dim E \leq k\},
\]
where $d$ denotes the Banach--Mazur distance (see Section \ref{subsec=bs}). This sequence gives quantitative information on the geometry of the Banach space $X$ and describes, in a way, how similar $X$ is to a Hilbert space. It is classical \cite{john} that for every $X$ and every $k \geq 1$, we have $d_k(X) \leq k^{\frac 1 2}$. Let $\beta < \frac{1}{2}$. In what follows, we consider Banach spaces $X$ for which
\begin{equation} \label{eq=d_kX_grows_slowly}
  \exists C>0 \textrm{ such that }d_k(X) \leq C k^\beta\textrm{ for all }k\geq 1.
\end{equation}
\begin{thmA} \label{thm=strongt}
For every $\beta < \frac{1}{2}$, there exists an integer $N \geq 2$ such that every connected simple Lie group $G$ of real rank at least $N$ has strong property (T) with respect to the Banach spaces satisfying \eqref{eq=d_kX_grows_slowly}.
\end{thmA}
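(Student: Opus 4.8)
The plan is to follow the strategy pioneered by Lafforgue for $\SL_3(\R)$ and refined by de la Salle for Banach spaces, but to run every estimate while keeping explicit track of how the constants depend on the real rank $r$ of $G$, and then to take $r$ large enough that the loss coming from \eqref{eq=d_kX_grows_slowly} is absorbed. First I would reduce to a short list of groups: the exceptional restricted root systems all have rank at most $8$, so for $N>8$ a connected simple Lie group of real rank $\geq N$ has restricted root system of type $A_r$, $B_r$, $C_r$, $D_r$ or $BC_r$ with $r\geq N$, and hence $G$ is locally isomorphic to one of the classical series $\SL_n(\mathbb{K})$ ($\mathbb{K}\in\{\R,\C,\mathbb{H}\}$), $\Sp_{2n}(\R)$, $\mathrm{SO}(p,q)$, $\mathrm{SU}(p,q)$, $\Sp(p,q)$ or $\mathrm{SO}^{\ast}(2n)$, with the rank parameter tending to infinity with $N$. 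Since strong property (T) with respect to $X$ is unaffected by passing to a finite cover or to a quotient by a central subgroup, and passes between a group and its cocompact lattices (the reductions recalled in Section~\ref{sec=strongt}), it suffices to establish the estimate below for one group in each of these families.

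The analytic heart is the following. Fix the Cartan decomposition $G=K\overline{A^{+}}K$ with a $K$-bi-invariant length $\ell$, and let $m_t$ be the $K$-bi-invariant probability measure equidistributed on $\{g\in G:\ell(g)\in[t,t+1)\}$. The core claim is that there are constants $C_r$, $a_r$ and a decay exponent $\delta_r$ such that, for every unitary representation $\pi$ of $G$ with no nonzero invariant vector,
\[
  \bigl\|\pi(m_t)\bigr\|_{\bo(\mathcal{H})}\;\leq\;C_r\,(1+t)^{a_r}\,e^{-\delta_r t},
\]
and, crucially, such that $\delta_r/\gamma_r\to\tfrac12$ as $r\to\infty$, where $\gamma_r$ denotes the exponential volume-growth rate of $G$. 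One proves this by executing Lafforgue's argument — which plays off the relative property (T) of suitable subgroups of the form $\SL_2(\R)\ltimes V$ against harmonic analysis on $K\backslash G/K$ — inside a group of rank $r$, while monitoring how the quantitative input improves with $r$: the larger the rank, the bigger the relative-(T) subgroups one may exploit and the closer the resulting radial decay gets to the optimal rate $\tfrac12\gamma_r$ (no representation decays faster than a tempered one). The hard part of the whole proof is exactly this step: producing such an estimate with the correct rank-dependence, and doing so uniformly across the classical families listed above.

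It remains to transfer to $X$ and conclude. For an isometric representation $\pi$ of $G$ on $X$ with $\|\pi(g)\|_{\bo(X)}\leq e^{\epsilon\ell(g)}$, one compares $\pi(m_t)-\pi(m_{t+1})$ with the Hilbert-space situation by decomposing along the finitely many ``$K$-types visible at scale $\leq t$'', a subspace whose dimension $k(t)$ grows like $e^{\gamma_r t}$; the definition of the Banach--Mazur distance then converts the estimate above into
\[
  \bigl\|\pi(m_t)-\pi(m_{t+1})\bigr\|_{\bo(X)}\;\leq\;d_{k(t)}(X)\,e^{\epsilon t}\,C'_r\,(1+t)^{a_r}\,e^{-\delta_r t}.
\]
By \eqref{eq=d_kX_grows_slowly} the right-hand side is at most $C''\,(1+t)^{a_r}\,e^{(\beta\gamma_r+\epsilon-\delta_r)t}$, which is summable in $t$ as soon as $\beta\gamma_r+\epsilon<\delta_r$, i.e.\ as soon as $\beta<\delta_r/\gamma_r-\epsilon/\gamma_r$. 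Since $\delta_r/\gamma_r\to\tfrac12$, for any fixed $\beta<\tfrac12$ there are an integer $N$ and an $\epsilon>0$ such that $\mathrm{rk}(G)\geq N$ makes this hold; then $(\pi(m_t))_t$ is Cauchy in $\bo(X)$, its limit is a bounded idempotent onto the space of invariant vectors with the Kazhdan-projection property, and this is precisely strong property (T) with respect to $X$. The threshold $\tfrac12$ is optimal for this scheme: $d_k(X)$ can be as large as $k^{1/2}$ by John's theorem, whereas $\delta_r/\gamma_r$ can never exceed $\tfrac12$.
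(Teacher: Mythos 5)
The core analytic claim of your scheme is unsupported and, as stated, false, and it is precisely where the whole argument lives. You posit a uniform bound $\|\pi(m_t)\|\leq C_r(1+t)^{a_r}e^{-\delta_r t}$ for \emph{all} unitary representations without invariant vectors, with $\delta_r/\gamma_r\to\frac12$ as the rank $r$ grows. No mechanism is offered for this, and the known quantitative spectral gap behaves in the opposite way: the uniform decay exponent one can extract (from relative property (T) for $\mathrm{SL}_2\ltimes\R^2$-type subgroups, or from strongly orthogonal systems of roots à la Oh) grows at most linearly in $r$, whereas $\rho$, hence the volume growth rate $\gamma_r$, grows quadratically; moreover there genuinely exist non-tempered representations without invariant vectors (parabolically induced/complementary-series type) whose decay exponent stays of the order of a bounded number of roots for every $r$. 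So $\delta_r/\gamma_r\to 0$ rather than $\frac12$, and your summability condition $\beta\gamma_r+\epsilon<\delta_r$ fails for every fixed $\beta>0$ once $r$ is large --- the opposite of what you need. The threshold $\frac12$ in the theorem does not come from ``tempered decay versus volume growth'' on $G$ at all: in the paper it comes from square-root-of-dimension decay of coefficients of irreducible representations of a \emph{compact} subgroup $\mathrm{SO}(n)$ acting on $\sphere^{n-1}$, whose isotypic dimensions grow only polynomially (degree $n-2$) in the spherical-harmonic degree (Lemma \ref{lem=estimates_Tdelta_X}, via Proposition \ref{prop:nonequivariant}); the rank enters only through how large a sphere one can embed, the group-level estimates being run along the single family of double cosets $K D(u,0,-u)K$ in $\mathrm{SL}(3n-6,\R)$ (Proposition \ref{prp=strongtsln}) and then transported to all groups of real rank $\geq N$ by Dynkin's embedding of $\mathrm{SL}(N,\R)$, rather than by your classification into classical families.

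Two further steps are not arguments as written. First, the Banach transfer: for an arbitrary representation of $G$ on $X$ with small exponential growth there is no finite-dimensional subspace of ``$K$-types visible at scale $t$'', and the transference that is actually available (as in $(T_\delta-T_{\delta'})\otimes\mathrm{Id}_X$ in Lemma \ref{lem=estimates_Tdelta_X}) pays $d_k(X)$ only for $k$ the dimension of a single $K$-isotypic component of $L^2(\sphere^{n-1})$, which is polynomial in the degree --- never for an exponentially large truncation $k(t)\sim e^{\gamma_r t}$; and even if such a comparison held, beating the loss $d_{k(t)}(X)\sim e^{\beta\gamma_r t}$ would require exactly the decay claim refuted above. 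Second, you assert that the limit of $\pi(m_t)$ ``is a bounded idempotent onto the space of invariant vectors'' as if it were automatic; for representations that are not isometric, on spaces that need not be reflexive, identifying the limit as a projection onto $X^{\pi(G)}$ is the main new difficulty of the paper (Steps 2 and 3 of the proof of Proposition \ref{prp=strongtsln}: convergence along the larger family $\mathcal D$, the inclusion $D_ikD_j\in K\mathcal D K$, vanishing of nontrivial $K$-isotypic coefficients and Peter--Weyl), and your proposal contains no substitute for it.
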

\begin{rem}\label{rem=remstrongt}\mbox{}
\begin{enumerate}[(i)]
\item By \cite[Proposition 4.3]{lafforguestrongt} (see also \cite[Proposition 5.2]{lafforguefastfourier}), it is known that strong property (T) passes to cocompact lattices. Hence, it is immediate that Theorem \ref{thm=strongt} also holds for cocompact lattices in $G$. It is not known whether strong property (T) passes to non-cocompact lattices.
\item It is known from the work of Lafforgue \cite{lafforguestrongt} and the work of Liao \cite{liao} that for a non-Archimedean local field $F$, any connected almost $F$-simple algebraic group with $F$-split rank at least $2$ has strong property (T) with respect to all Banach spaces with type $>1$ (the notion of type is recalled in Section \ref{subsec=bs}). One of the numerous characterizations of the fact that a Banach space $X$ has type $>1$ is that $\lim_{k \to \infty} k^{-\frac 1 2} d_k(X)= 0$, and it is an open problem whether all Banach spaces of type $>1$ satisfy \eqref{eq=d_kX_grows_slowly} for some $\beta < \frac{1}{2}$ (see \cite[Problem 27.6]{tomczakbook}). It is natural to expect that as in the non-Archimedean case, every connected simple Lie group $G$ of real rank at least $2$ has strong property (T) with respect to the Banach spaces of type $>1$. This is still open. The first steps towards such a result were provided in \cite{delasalle1} and \cite{delaatdelasalle1}, the main results of which imply that every connected simple Lie group with real rank at least $2$ has strong property (T) with respect to the Banach spaces satisfying \eqref{eq=d_kX_grows_slowly} for $\beta<\frac{1}{10}$. Theorem \ref{thm=strongt} is another step towards a real analogue of Lafforgue's and Liao's results.
\item In \cite[Theorem 5.8]{delaatdelasalle2}, the first-named author and the third-named author showed that for every $\beta< \frac 1 2$, there exists an $N\geq 2$ such that every connected simple Lie group $G$ of real rank at least $N$ has property (T$^{\mathrm{proj}}_X$) with respect to the superreflexive Banach spaces satisfying \eqref{eq=d_kX_grows_slowly}. The condition that the space is superreflexive was used through a result of Shalom asserting that isometric representations of semisimple Lie groups on superreflexive Banach spaces have the Howe-Moore property (see \cite[Theorem 9.1]{baderfurmangelandermonod}). In fact, a much older result of Veech \cite{veech} asserts that the same conclusion holds more generally for reflexive spaces. This implies that \cite[Theorem 5.8]{delaatdelasalle2} (and hence also \cite[Theorem 1.4]{delaatdelasalle2}) also holds more generally for reflexive spaces. Theorem \ref{thm=strongt} provides a strengthening of the result in \cite{delaatdelasalle2}. Indeed, we prove strong property (T) rather than property (T), and the (super)reflexivity assumption is not needed. However, in \cite{delaatdelasalle2}, the condition on $N$ is $N > \max\{8,\frac{2}{1-2\beta} - 1\}$, whereas here we have the stronger condition $N > \max\{8,\frac{3}{1-2\beta} - 3\}$. We refer to the beginning of Section \ref{sec=strongt} for further comparison between Theorem \ref{thm=strongt} and the work in \cite{delaatdelasalle2}.
\item\label{item:interpolation} As in \cite{delasalle1} and \cite{delaatdelasalle1}, the conclusion of Theorem \ref{thm=strongt} also holds with \linebreak $N>\max(8,\frac{3}{1-2\beta} - 3)$ if $X$ is a complex interpolation space between a Banach space satisfying \eqref{eq=d_kX_grows_slowly} and an arbitrary Banach space. It is unknown whether there exists a  Banach space satisfying \eqref{eq=d_kX_grows_slowly} for some $\beta<\frac 1 2$ (or more generally a space of type $>1$) which is not a complex interpolation space between a space satisfying \eqref{eq=d_kX_grows_slowly} for $\beta = 10^{-10}$ and an arbitrary Banach space.
\end{enumerate}
\end{rem}
It was proved by Lafforgue that if $G$ has strong property (T) with respect to $X \oplus \C$, then every continuous affine isometric action of $G$ on $X$ has a fixed point, i.e.~$G$ has property (F$_X$) in the terminology of \cite{baderfurmangelandermonod}. Hence, as a consequence of Theorem \ref{thm=strongt}, we obtain the following result.
\begin{thmA} \label{thm=fpp}
For every $\beta < \frac{1}{2}$, there exists an integer $N \geq 2$ such that every continuous affine isometric action of a connected simple Lie group $G$ of real rank at least $N$ on a Banach space $X$ satisfying  \eqref{eq=d_kX_grows_slowly} has a fixed point, i.e. the group $G$ has property (F$_X$).
\end{thmA}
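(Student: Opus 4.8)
The plan is to obtain Theorem~\ref{thm=fpp} as an immediate consequence of Theorem~\ref{thm=strongt} together with Lafforgue's implication recalled just above: if $G$ has strong property (T) with respect to $X \oplus \C$, then every continuous affine isometric action of $G$ on $X$ has a fixed point. So fix $\beta < \frac{1}{2}$ and let $N$ be the integer produced by Theorem~\ref{thm=strongt} for this $\beta$; I claim the same $N$ works in Theorem~\ref{thm=fpp}. Let $G$ be a connected simple Lie group of real rank at least $N$ acting continuously by affine isometries on a Banach space $X$ satisfying \eqref{eq=d_kX_grows_slowly}. The whole argument reduces to checking that $Y = X \oplus \C$ again satisfies \eqref{eq=d_kX_grows_slowly} with the same exponent $\beta$; granting this, Theorem~\ref{thm=strongt} gives $G$ strong property (T) with respect to $Y$, Lafforgue's implication then produces the desired fixed point, and since this holds for every such action, $G$ has property (F$_X$).

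For the verification, I would first observe that the class of Banach spaces satisfying \eqref{eq=d_kX_grows_slowly} for a fixed $\beta$ is stable under finite $\ell^2$-direct sums: if $E$ is a subspace of dimension at most $k$ of $X_1 \oplus_2 X_2$, then on each coordinate projection $\pi_i(E) \subseteq X_i$ one may replace the norm of $X_i$ by a Euclidean norm that is $d_k(X_i)$-equivalent to it, and the $\ell^2$-combination of the pullbacks of these two Euclidean norms is a Euclidean norm on $E$ that is $\max_i d_k(X_i)$-equivalent to the inherited norm; hence $d_k(X_1 \oplus_2 X_2) \le \max_i d_k(X_i)$. Since $\C$, regarded as a real Banach space, is Hilbertian and so satisfies \eqref{eq=d_kX_grows_slowly} trivially, $X \oplus_2 \C$ satisfies it with the same $\beta$ and the constant $\max(C,1)$. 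Should the form of Lafforgue's implication being used be phrased with a direct-sum norm on $X \oplus \C$ other than the $\ell^2$-one, it suffices to add that all the usual norms on a direct sum with a fixed finite-dimensional Hilbert space are uniformly equivalent — the identity map between $X \oplus_p \C$ and $X \oplus_2 \C$ has distortion at most $\sqrt{2}$, uniformly in $X$ — so \eqref{eq=d_kX_grows_slowly} persists, with the same $\beta$ and a constant enlarged by at most $\sqrt{2}$.

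Since the substance has already been placed in Theorem~\ref{thm=strongt}, there is essentially no obstacle left at this stage: the only genuinely new point is the elementary stability of \eqref{eq=d_kX_grows_slowly} under adjoining the summand $\C$. The one thing deserving a moment of care is to confirm that the version of Lafforgue's implication being invoked applies verbatim in the real setting and for continuous — not merely measurable — affine isometric actions of the connected Lie group $G$; this is standard, but it is the place where a hasty application could slip. I expect the write-up to be short, essentially the few lines of the first paragraph once the direct-sum remark has been recorded.
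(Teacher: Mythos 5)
Your proposal is correct and follows essentially the same route as the paper: the paper obtains Theorem~\ref{thm=fpp} directly from Theorem~\ref{thm=strongt} via Lafforgue's implication that strong property (T) with respect to $X \oplus \C$ yields property (F$_X$), with the (routine, and in the paper implicit) observation that adjoining a Hilbertian summand preserves \eqref{eq=d_kX_grows_slowly} with the same $\beta$. Your explicit verification of that stability is fine and just fills in what the paper leaves unsaid.
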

Although it is not known whether strong property (T) passes to non-cocompact lattices, it is known from \cite{baderfurmangelandermonod} that under certain conditions property (F$_X$) with respect to a class of Banach spaces passes from a locally compact group to its lattices. These conditions are satisfied, and we obtain the following corollary.
\begin{corB}\label{cor=fpplattices}
For every $\beta < \frac{1}{2}$, there exists an integer $N \geq 2$ such that every lattice in a connected simple Lie group of real rank at least $N$ has property (F$_X$) for every Banach space $X$ satisfying  \eqref{eq=d_kX_grows_slowly}.
\end{corB}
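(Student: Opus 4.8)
The plan is to obtain Corollary \ref{cor=fpplattices} from Theorem \ref{thm=fpp} by means of the lattice-induction technique of Bader, Furman, Gelander and Monod \cite{baderfurmangelandermonod}; note that one cannot invoke strong property (T) directly here, since it is not known to pass to non-cocompact lattices, so the argument must be carried out at the level of affine isometric actions. Recall that \cite{baderfurmangelandermonod} provides, for a lattice $\Gamma$ in a second countable locally compact group $G$, an induction procedure turning an affine isometric $\Gamma$-action on $X$ (with linear part $\rho$ and cocycle $b$) into a continuous affine isometric $G$-action on the Banach space $L^2(G/\Gamma, X)$, where $G/\Gamma$ carries its $G$-invariant probability measure, in such a way that a $G$-fixed point yields, by an averaging argument over a fundamental domain, a $\Gamma$-fixed point in $X$. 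Consequently, if $G$ has property (F$_Y$) for every $Y$ in a class $\mathcal{C}$ of Banach spaces that is stable under the operation $X \mapsto L^2(G/\Gamma, X)$, then every lattice $\Gamma \leq G$ has property (F$_X$) for every $X \in \mathcal{C}$. So the whole corollary follows once I check that the class of Banach spaces satisfying \eqref{eq=d_kX_grows_slowly} is stable under this operation: Theorem \ref{thm=fpp} then supplies property (F$_{\mathcal{C}}$) for $G$, and the induction does the rest.

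The heart of the matter is therefore the elementary estimate
\[ d_k\bigl(L^2(\Omega,\mu;X)\bigr) \leq d_k(X), \]
valid for every probability space $(\Omega,\mu)$, every Banach space $X$, and every $k \geq 1$. By density of simple functions and a routine perturbation argument it suffices to prove this for $\ell^2_n(X) = X \oplus_2 \cdots \oplus_2 X$. So let $E \subseteq \ell^2_n(X)$ be a $k$-dimensional subspace, let $E_i \subseteq X$ be the image of $E$ under the $i$-th coordinate projection, so $\dim E_i \leq k$, and choose linear isomorphisms $u_i \colon E_i \to \ell^2_{\dim E_i}$ with $\|u_i\| \leq 1$ and $\|u_i^{-1}\| \leq d(E_i, \ell^2_{\dim E_i}) \leq d_k(X)$. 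Since the norms on $\ell^2_n(X)$ and on $\bigoplus_i \ell^2_{\dim E_i}$ are both $\ell^2$-sums of the coordinate norms, the restriction to $E$ of $u = \bigoplus_i u_i$ is injective, with $\|u|_E\| \leq \max_i \|u_i\| \leq 1$ and $\|(u|_E)^{-1}\| \leq \max_i \|u_i^{-1}\| \leq d_k(X)$; as $u(E)$ is a $k$-dimensional subspace of a Hilbert space, this gives $d(E, \ell^2_k) \leq d_k(X)$. In particular, if $X$ satisfies \eqref{eq=d_kX_grows_slowly} with constant $C$ and exponent $\beta$, then so does $L^2(G/\Gamma, X)$, with the same $C$ and $\beta$.

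To conclude, I would fix $\beta < \frac{1}{2}$, take $N$ as in Theorem \ref{thm=fpp}, and let $G$ be a connected simple Lie group of real rank at least $N$, $\Gamma \leq G$ a lattice, and $X$ a Banach space satisfying \eqref{eq=d_kX_grows_slowly}. By the estimate above, $L^2(G/\Gamma, X)$ again satisfies \eqref{eq=d_kX_grows_slowly}, so Theorem \ref{thm=fpp} yields property (F$_{L^2(G/\Gamma, X)}$) for $G$, and the induction of \cite{baderfurmangelandermonod} then gives property (F$_X$) for $\Gamma$. As $X$ was arbitrary in the class, this is precisely Corollary \ref{cor=fpplattices}. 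The only genuine obstacle in this scheme is the stability estimate of the middle paragraph (together with the verification that it is exactly the closure hypothesis required by \cite{baderfurmangelandermonod}); everything else is a formal combination of Theorem \ref{thm=fpp} with that machinery.
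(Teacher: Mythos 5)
Your overall route is the paper's route (induce to $G$ via Bader--Furman--Gelander--Monod, plus stability of \eqref{eq=d_kX_grows_slowly} under $X \mapsto L^2(G/\Gamma;X)$), but there is a genuine gap in how you invoke the induction. The passage from (F$_{L^2(G/\Gamma;X)}$) for $G$ to (F$_X$) for $\Gamma$ is \emph{not} available for an arbitrary lattice in a second countable locally compact group: for the induced affine action on $L^2(G/\Gamma;X)$ to be well defined one needs, for each $g\in G$, that the function $x \mapsto b(\alpha(g,x))$ lies in $L^2(G/\Gamma;X)$, where $\alpha \colon G \times G/\Gamma \to \Gamma$ is the cocycle attached to a fundamental domain and $b$ is the (in general unbounded, typically linearly growing) cocycle of the given $\Gamma$-action. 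This is exactly the requirement that $\Gamma$ be a $2$-integrable lattice, and it is the standing hypothesis of \cite[Proposition 8.8]{baderfurmangelandermonod}, which is the statement you are actually using. For cocompact lattices $2$-integrability is automatic, but for non-cocompact lattices (e.g.\ $\mathrm{SL}(N,\mathbb{Z})$ in $\mathrm{SL}(N,\mathbb{R})$) it is a nontrivial theorem: Shalom proved it for lattices in higher rank algebraic groups, and the extension to all connected simple Lie groups of higher rank is \cite[Proposition 7.1]{delaatdelasalle1}. So your closing claim that ``the only genuine obstacle in this scheme is the stability estimate'' is not correct; without the $2$-integrability input the induced action need not exist, and supplying this input (by citing the results above) is a necessary part of the proof.

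Once that hypothesis and its verification are added, your argument agrees with the paper's, and the part you do work out in detail is fine: the coordinate-projection argument on $\ell^2_n(X)$, combined with approximation by simple functions, correctly gives $d_k\bigl(L^2(\Omega,\mu;X)\bigr) \leq d_k(X)$ for every probability space, which is in fact slightly sharper than the bound $d_k\bigl(L^2(\Omega,\mu;X)\bigr) \leq 2\,d_k(X)$ that the paper extracts from inequality (6) of \cite{delaatdelasalle2}; either bound suffices, since \eqref{eq=d_kX_grows_slowly} is only required up to a multiplicative constant.
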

\begin{rem} \label{rem=fx}\mbox{}
\begin{enumerate}[(i)]
\item Theorem \ref{thm=fpp} (as well as the result of the first-named author and the third-named author mentioned above) corroborates a conjecture of Bader, Furman, Gelander and Monod, asserting that connected semisimple Lie groups with higher rank simple factors and finite center and lattices in such groups have property (F$_X$) for every superreflexive Banach space $X$ (see \cite[Conjecture 1.6]{baderfurmangelandermonod} for a more precise and slightly stronger formulation of this conjecture).
\item Contrary to the case of Hilbert spaces, in which property (T) is equivalent to property (FH), in general property (F$_X$) is not implied by property (T$_X$) or property (T$^{\mathrm{proj}}_X$). Theorem \ref{thm=fpp} was therefore not a formal consequence of \cite{delaatdelasalle2}, not even for superreflexive spaces.
\item A formal consequence of Theorem \ref{thm=fpp} is that for $G$ and $X$ as in the theorem, the group $G$ has property ($\overline{\mathrm{F}}_X$): every uniformly equicontinuous affine action of $G$ on $X$ has a fixed point. Indeed, for a uniformly equicontinuous affine action of $G$ on $X$, there is an equivalent norm on $X$ for which the action is affine and isometric, and replacing the norm on $X$ by an equivalent norm preserves condition \eqref{eq=d_kX_grows_slowly} without changing $\beta$.
\end{enumerate}
\end{rem}
A second application of strong property (T) is as follows: if a locally compact group $G$ has strong property (T) (actually property (T$^{\mathrm{proj}}$) is enough) with respect to a class $\mathcal{E}$ of Banach spaces that is stable under taking vector-valued $L^2$-spaces, then the expanders coming from a lattice in $G$ do not coarsely embed into any Banach space in $\mathcal{E}$. This was proved by Lafforgue in \cite{lafforguestrongt} (see also \cite{lafforguefastfourier}). However, it turns out that for this result on the non-coarse-embeddability of expanders, one does not need the full strength of strong property (T) or property (T$^{\mathrm{proj}}$). In \cite{delaatdelasalle2}, the first-named author and the third-named author observed that in fact a form of Banach property (T) for a restricted family of representations (namely certain representations on vector-valued $L^2$-spaces) suffices to prove this result. This allowed them to prove that if $X$ satisfies \eqref{eq=d_kX_grows_slowly}, then the expanders coming from connected simple Lie groups with sufficiently high real rank do not coarsely embed into $X$, even though they could not prove property (T$^{\mathrm{proj}}_X$) when $X$ is not reflexive.

Recently, Oppenheim showed that certain groups that are not realized as lattices in Lie groups, e.g.~certain groups acting on simplicial complexes and certain (Kac--Moody--)Steinberg groups, satisfy a Banach space strengthening of property (T) that he calls \emph{robust Banach property (T)} \cite{oppenheim}. This property is slightly weaker than strong property (T), but still has the same consequences (on property (F$_X$) and the non-coarse-embeddability of expanders) as strong property (T).

A second direction that we investigate in this article is a more direct approach to fixed point properties, or, more precisely, to boundedness properties of \linebreak (quasi-)$1$-cocycles. We are able to use this approach in the setting of special linear Lie groups. A locally compact group $G$ is said to have property (FF$_X$) if for every isometric representation $\rho \colon G \to O(X)$, any quasi-$1$-cocycle $c\colon G\to X$ into $\rho$ is bounded (see Section \ref{subsec=ffx} for definitions and details). Our main result on property (FF$_X$) is as follows.
\begin{thmA} \label{thm=ffpp}
For every $\beta < \frac{1}{2}$, there exists an integer $N \geq 3$ such that $\mathrm{SL}(N,\mathbb{R})$ has property (FF$_X$) for all $X$ satisfying \eqref{eq=d_kX_grows_slowly}. 
\end{thmA}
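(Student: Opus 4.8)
The plan is to establish property (FF$_X$) for $\mathrm{SL}(N,\R)$ by a bootstrapping argument on quasi-cocycles, exploiting the fact that a quasi-$1$-cocycle on a group generated by a small number of suitably-placed subgroups can be controlled once its restriction to each subgroup is controlled. First I would set up the quasi-cocycle framework: given an isometric representation $\rho\colon G \to O(X)$ and a quasi-$1$-cocycle $c\colon G \to X$ with defect $D(c)=\sup_{g,h}\|c(gh)-c(g)-\rho(g)c(h)\| < \infty$, the goal is a bound $\sup_g \|c(g)\| < \infty$. The standard strategy is to use the higher-rank structure of $\mathrm{SL}(N,\R)$: it contains many copies of $\mathrm{SL}(2,\R)\ltimes \R^2$ and of $\mathrm{SL}(2,\R)\times \mathrm{SL}(2,\R)$ coming from disjoint $2\times 2$ blocks, together with abelian root subgroups, and one shows a quasi-cocycle is at bounded distance from an honest cocycle on each relevant piece, then that the honest cocycle is a coboundary there, and finally one propagates the resulting bounds across an explicit generating set.

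The key steps, in order, would be as follows. \emph{Step 1:} Reduce to a cohomological statement on genuine $1$-cocycles by the Hahn–Banach/ultralimit trick: a quasi-$1$-cocycle $c$ with defect $D$ into $\rho$ gives rise, via an invariant mean on the bounded functions $g\mapsto c(gh)-c(g)$, to a genuine $1$-cocycle $b$ into an isometric representation on an ultrapower (or a bidual) $\widetilde X$ of $X$ with $\sup_g\|c(g)-b(g)\|\le D$; crucially $\widetilde X$ still satisfies \eqref{eq=d_kX_grows_slowly} with the same $\beta$, since $d_k$ only sees finite-dimensional subspaces and these embed isometrically into $X$ itself. So it suffices to show that $\mathrm{SL}(N,\R)$ has vanishing first cohomology $H^1(\mathrm{SL}(N,\R),\rho) = 0$ for isometric representations $\rho$ on spaces satisfying \eqref{eq=d_kX_grows_slowly} — this is property (F$_X$) in cohomological form and is known to follow (for $N$ large in terms of $\beta$) from the strong property (T) results recalled above (Theorems~\ref{thm=strongt} and~\ref{thm=fpp}), or can be proved directly. \emph{Step 2:} Use the relative property (T$_X$) of the pair $(\mathrm{SL}(2,\R)\ltimes\R^2,\R^2)$ — equivalently, the fact that, on spaces satisfying \eqref{eq=d_kX_grows_slowly}, the representation $\rho$ restricted to the abelian part has no almost-invariant vectors away from its invariant subspace — to conclude that the cocycle $b$ is bounded on each root subgroup $U_{ij}$. \emph{Step 3:} Combine the boundedness on root subgroups with the bounded generation of $\mathrm{SL}(N,\R)$ by a fixed number (depending only on $N$) of root subgroups: if $b$ is bounded by $M$ on each of the $m$ root subgroups used, then on a product of $\ell$ such elements $\|b\|\le \ell M$, so $b$ is bounded on all of $\mathrm{SL}(N,\R)$, and hence so is $c$.

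The main obstacle I expect is \emph{Step 2}: controlling the cocycle on the abelian root subgroups. Unlike the Hilbert-space case, where relative property (T) of $(\mathrm{SL}(2,\R)\ltimes\R^2,\R^2)$ immediately kills cohomology, here one must quantify how the geometry of $X$ (via the exponent $\beta<\tfrac12$) interacts with the decay of matrix coefficients of the Weyl-group action on the abelian part: one needs that, for $N$ sufficiently large, a $\rho(\mathrm{SL}(2,\R)\ltimes\R^2)$-cocycle that is linear on $\R^2$ must be trivial on $\R^2$, which amounts to showing the relevant averaging operators over $\mathrm{SL}(2,\R)$-orbits on the dual of $\R^2$ are strictly contractive on $X/X^{\rho}$ — precisely the kind of estimate that the $d_k(X)\le Ck^\beta$ hypothesis and John's theorem are designed to supply, but which must be assembled carefully, presumably by interpolating between an $\ell^2$-type bound and the trivial bound and then iterating over the $N(N-1)/2$ root directions so that the loss per step is beaten by the rank. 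I would handle the $\mathrm{SL}(2,\R)\times\mathrm{SL}(2,\R)$ (or $\mathrm{SL}(2,\R)\times T$) commuting pairs analogously, using that a cocycle restricted to a direct product decomposes and that boundedness on commuting copies forces boundedness on their product, and finally I would optimise the dependence of $N$ on $\beta$ to obtain the claimed threshold $N\ge 3$ with $N>\max(8,\tfrac{3}{1-2\beta}-3)$, mirroring Theorem~\ref{thm=strongt}.
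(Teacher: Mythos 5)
Your Step 1 contains a genuine gap that undermines the whole plan. You propose to replace the quasi-$1$-cocycle $c$ by a genuine $1$-cocycle $b$ at bounded distance, ``via an invariant mean on the bounded functions $g\mapsto c(gh)-c(g)$''. There is no invariant mean available here: $\mathrm{SL}(N,\mathbb{R})$ is not amenable, and the straightening of quasi-cocycles into cocycles at bounded distance is exactly what fails for non-amenable groups. The obstruction is the kernel of the comparison map $H^2_{\mathrm{cb}}(G;X,\rho)\to H^2_{\mathrm{c}}(G;X,\rho)$, and its injectivity (together with boundedness of orbits) is \emph{equivalent} to property (FF$_X$); so reducing (FF$_X$) to the vanishing of $H^1$, i.e.\ to property (F$_X$), assumes precisely the hard part. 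Indeed, as recalled in Remark~\ref{rem=fpffp}, property (F$_X$) does not imply (FF$_X$) in general: infinite hyperbolic groups with property (T) have (FH) but fail (TT). Once Step 1 is removed, your Steps 2--3 only give boundedness of a genuine cocycle, i.e.\ Theorem~\ref{thm=fpp}, which is not the statement to be proved; and the known vanishing results for second bounded cohomology (Burger--Monod, Monod--Shalom) do not cover isometric representations on the Banach spaces satisfying \eqref{eq=d_kX_grows_slowly}, which is exactly the regime where Theorem~\ref{thm=ffpp} is of interest.

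For comparison, the paper never reduces to genuine cocycles. It deduces property (T$_X$) for $\mathrm{SL}(N',\mathbb{R})$, then weak relative property (T$_X$) for the pair $\mathrm{SL}(N',\mathbb{R})\ltimes\mathbb{R}^{N'}>\mathbb{R}^{N'}$ (Lemma~\ref{lem=easy}), and then applies the key ``rank-increasing'' step, Proposition~\ref{prp=fromttoffpp}: weak relative (T$_X$) in rank $n$ forces relative (FF$_X$) for $\mathrm{E}(n+1,A)\ltimes A^{n+1}>A^{n+1}$. That proposition manipulates the quasi-cocycle directly: commutation of the top-row translations with a large generating subset bounds $\|\rho(s)c(h)-c(h)\|$, weak relative (T$_X$) inside two embedded copies of $\mathrm{E}(n,A)\ltimes A^n$ propagates this to the seminorm $\mathcal{N}(x)=\sup_{w\in H}\|\rho(w)x-x\|$ using $H\subset LH_2LH_2$, and the only averaging used is the Ces\`aro average $\frac1n\sum_{i<n}\rho(iw)$ along the abelian normal subgroup, which is legitimate there. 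Boundedness on all of $\mathrm{SL}(N'+2,\mathbb{R})$ then follows by embedding the pair in several positions and using bounded elementary generation; your Step 3 coincides with this last step, and your Step 2 is in the right spirit, but without a correct substitute for Step 1 the argument does not yield property (FF$_X$).
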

\begin{rem} \label{rem=fpffp}\mbox{}
\begin{enumerate}[(i)]
\item Property (FF$_X$) is a boundedness property for continuous rough actions, i.e.~actions up to a uniformly bounded error, by affine isometries on $X$. It was introduced by the second-named author in \cite{mimura} as a Banach space version of property (TT) of Monod (see \cite{monod}).
\item For general $X$, it is not the case that property (FF$_X$) implies property (F$_X$). For example, let $X=\ell^1_0$ denote the zero-sum subspace of $\ell^1$ over a countable set. Now \cite[Example~2.23]{baderfurmangelandermonod} shows that any (infinite) countable discrete group fails to have property (F$_{\ell^1_0}$), whereas we will see in Remark~\ref{rem=ell1} that $\mathrm{SL}(4,\mathbb{Z})$ has property (FF$_{\ell^1_0}$). 

However, if $X$ satisfies the property that every continuous group action on $X$ by affine isometries with bounded orbits has a global fixed point, then property (FF$_X$) implies property (F$_X$). Note that every reflexive Banach space satisfies this property, as follows from the Ryll-Nardzewski fixed point theorem. Hence, Theorem \ref{thm=ffpp} establishes property (F$_X$) for high rank special linear Lie groups with respect to large classes of (in particular reflexive) Banach spaces. This approach is more direct than the approach through strong property (T).
\item In comparison with \cite[Theorem 1.4]{monodshalom}, Theorem \ref{thm=ffpp} and Corollary \ref{cor=ffppforSLnZ} are of special interest if the corresponding isometric representation is not coming from the contragredient representation of an isometric representation on a separable Banach space.
\end{enumerate}
\end{rem}
In the proof of Theorem \ref{thm=ffpp}, we will deduce property (FF$_X$) from the aforementioned property (T$^{\mathrm{proj}}_X$). In the study of property (F$_X$) in \cite{baderfurmangelandermonod}, a version of the Howe--Moore property in the setting of superreflexive Banach spaces due to Shalom is used (see Appendix 9 in \cite{baderfurmangelandermonod}). As mentioned before, the Howe--Moore property holds more generally in the setting of reflexive Banach spaces, as was proved by Veech \cite{veech}, but we do not see how to extend the arguments to Banach spaces for which the Howe-Moore property does not hold. In this article, we exploit a different method, based on previous work of the second-named author \cite{mimura}.

This article is organized as follows. We recall some preliminaries in Section \ref{sec=preliminaries}. In \ref{sec=properties}, we give precise definitions of strong property (T), property (F$_X$) and property (FF$_X$), we provide certain relevant background information, and we explain how Corollary \ref{cor=fpplattices} follows from Theorem \ref{thm=fpp}. We prove Theorem \ref{thm=strongt} in Section \ref{sec=strongt}. Theorem \ref{thm=ffpp} is proved in Section \ref{sec=ffpp}.

\section{Preliminaries} \label{sec=preliminaries}
\subsection{Polar decomposition of Lie groups} \label{subsec=kak}
Let $G$ be a connected (semi)simple Lie group with Lie algebra $\mathfrak{g}$. A polar/KAK decomposition of $G$ is given by $G=KAK$, where $K$ is such that its Lie algebra $\mathfrak{k}$ comes from a Cartan decomposition $\mathfrak{g}=\mathfrak{k} + \mathfrak{p}$ and $A$ is an abelian Lie group such that its Lie algebra $\mathfrak{a}$ is a maximal abelian subspace of $\mathfrak{p}$. The real rank of $G$ is defined as the dimension of $\mathfrak{a}$. In general, given a polar decomposition $g=k_1ak_2$ of an element $g$, where $k_1,k_2 \in K$ and $a \in A$, the element $a$ is not uniquely determined. However, after choosing a set of positive roots and restricting to the closure $\overline{A^{+}}$ of the positive Weyl chamber $A^{+}$, we still have the decomposition $G=K\overline{A^{+}}K$, but now, the element $a \in \overline{A^{+}}$ in the decomposition $g=k_1ak_2$ is uniquely determined. For details on this decomposition, we refer to \cite[Section IX.1]{helgasonlie}.

\subsection{Geometry of Banach spaces} \label{subsec=bs}
Two Banach spaces $X$ and $Y$ are said to be $C$-isomorphic if there exists an isomorphism $u\colon X \rightarrow Y$ such that $\|u\| \|u^{-1}\|\leq C$. The infimum of such constants $C$ is called the Banach--Mazur distance between $X$ and $Y$ and is denoted by $d(X,Y)$. It is known that if $X$ has dimension $k$, then $d(X,\ell^2_k) \leq k^{\frac{1}{2}}$. We have equality for $X=\ell^1_k$, i.e.~$d(\ell^1_k,\ell^2_k) = k^{\frac{1}{2}}$ for all $k \geq 1$.

Let $(g_i)_{i \in \N}$ be a sequence of independent complex Gaussian $\mathcal N(0,1)$ random variables defined on some probability space $(\Omega,\mathbb P)$. A Banach space $X$ is said to have type $p \geq 1$ if there exists a constant $T$ such that for all $n \in \N$ and all $x_1,\dots,x_n \in X$, we have $\| \sum_i g_i x_i \|_{L^2(\Omega;X)} \leq T \left(\sum_i \|x_i\|^p\right)^{1/p}$. The best $T$ is denoted by $T_p(X)$. A Banach space $X$ is said to have cotype $q \leq \infty$ if there exists a constant $C$ such that for all $n \in \N$ and all $x_1,\dots,x_n \in X$, we have $\left(\sum_i \|x_i\|^q\right)^{1/q} \leq C \| \sum_i g_i x_i\|_{L^2(\Omega;X)}$. The best $C$ is denoted by $C_q(X)$. 

Hilbert spaces have type $2$ and cotype $2$. It was proved by Kwapie{\'n} that this property characterizes the Banach spaces that are isomorphic to a Hilbert space. Superreflexive spaces have nontrivial type. On the other hand, there are spaces of nontrivial type that are not even reflexive. For every $q>2$, there are Banach spaces that are not reflexive but have type $2$ and cotype $q$ \cite{pisierxu}.

For details on the Banach--Mazur distance, type and cotype, we refer to \cite{tomczakbook}.

\subsection{Representations}
In this article, we consider linear representations of locally compact groups on Banach spaces that are strongly continuous, i.e.~the map $G \to X$ given by $g \mapsto \pi(g)x$ is continuous for every $x \in X$. Whenever a representation occurs, it is always assumed to be linear and strongly continuous, unless explicitly stated otherwise.

For a Banach space $X$, we denote by $O(X)$ the group of invertible linear isometries from $X$ to $X$. An isometric representation of a locally compact group $G$ is a (strongly continuous linear) representation $\pi \colon G \to O(X)$.

If $m$ is a compactly supported signed Borel measure on $G$ and $\pi:G \rightarrow B(X)$ is a representation, we denote by $\pi(m)$ the operator defined by $\pi(m) \xi = \int \pi(g)\xi dm(g)$ (Bochner integral) for all $\xi \in X$.

The contragredient representation ${}^t\pi$ of a representation $\pi$ of $G$ on $X$ is the representation of $G$ on $X^*$ given by $g \mapsto \pi(g^{-1})^*$. It might not be strongly continuous, even not if $\pi$ is.

\subsection{From estimates on invariant coefficients to estimates on finite type coefficients}
In our proof of Theorem \ref{thm=strongt}, we will use the following result, which was proved in \cite[Proposition 2.8]{delasalle1} under the assumption that $U$ is abelian. The setting is the following. Let $K$ be a compact Lie group with a left-invariant Riemannian metric $d$, let $U \subset K \times K$ be a closed subgroup. Let $\lambda$ be the left regular representation of $K$ on $L^2(K)$. We denote by $u \cdot k$ the action of an element $u = (k',k'') \in U$ on an element $k \in K$ by left-right multiplication. For $k \in K$ we denote by $U_k \subset U$ the stabilizer of $k$ for the action of $U$ on $K$. If $\rho$ is a finite-dimensional unitary representation of $U$ on a Hilbert space $V$, we denote by $V_k \subset V$ the space of $\rho(U_k)$-invariant vectors in $V$, and by $P_k \in B(V)$ the orthogonal projection onto $V_k$.

\begin{prp} \label{prop:nonequivariant}
For every $k_0 \in K$ and every finite-dimensional unitary representation $\rho$ of $U$ on $V$, there exists a constant $C_{V,k_0} > 0$ such that the following holds: for every Banach space $X$, every isometric representation $\pi \colon K \to O(X)$ and every map $f \colon B(X) \to V$ of the form $f(a) = \sum_{i=1}^n \langle a \xi_i,\eta_i\rangle v_i$ satisfying
\begin{itemize}
 \item $\xi_i \in X$, $\eta_i \in X^{\ast}$ and $v_i \in V$,
 \item $\sum_{i=1}^n \|\xi_i\|_X \|\eta_i\|_{X^*} \|v_i\|_V \leq 1$,
 \item $f(\pi(k') a \pi(k''^{-1})) = \rho(k',k'') f(a)$ for all $a \in B(X)$ and $(k',k'') \in U$,
\end{itemize}
we have
\begin{multline*}
  \|f(\pi(k)) - f(\pi(k_0))\|_V \leq \\ C_{V,k_0}\left(d(k,k_0) + \left\| \left(\int_{U} (\lambda(u \cdot k) - \lambda(u \cdot k_0))du\right) \otimes \mathrm{Id}_X \right\|_{B(L^2(K;X))} \right)
\end{multline*}
for all $k \in K$ such that $\mathrm{dim}(V_k)=\mathrm{dim}(V_{k_0})$.
\end{prp}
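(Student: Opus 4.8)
The plan is to reduce everything to a transference inequality for the regular representation, using the orbit structure of the $U$-action to handle the representation $\rho$. Write $\Phi(k):=f(\pi(k))\in V$ and normalise all Haar measures (on $K$ and on closed subgroups of $U$) to be probability measures. Three preliminary reductions set the stage. First, $\|\Phi(k)\|_V\le\sum_i\|\xi_i\|_X\|\eta_i\|_{X^*}\|v_i\|_V\le 1$, so $\|\Phi(k)-\Phi(k_0)\|_V\le 2$ and the inequality is trivial as soon as $d(k,k_0)$ is bounded below; hence it suffices to prove it for $k$ in an arbitrarily small fixed neighbourhood of $k_0$, with $C_{V,k_0}$ chosen only at the end. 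Second, taking $a=\pi(k)$ in the equivariance hypothesis gives $\Phi(u\cdot k)=\rho(u)\Phi(k)$ for all $u\in U$; in particular $\Phi(k)$ is fixed by $\rho(U_k)$, i.e.\ $\Phi(k)\in V_k$, and $\Phi(k)=\int_U\rho(u)^{-1}\Phi(u\cdot k)\,du$. Third, if $\dim V_{k_0}=0$ then $\Phi(k)=\Phi(k_0)=0$ for every $k$ with $\dim V_k=0$ and there is nothing to prove, so I may assume $d:=\dim V_{k_0}=\dim V_k\ge1$.

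The quantitative information about $\pi$ and $X$ enters only through the following standard device. For $\xi\in X$ let $\Xi_\xi\in L^2(K;X)$ be $\Xi_\xi(x)=\pi(x^{-1})\xi$, of norm $\|\xi\|_X$, and for $\eta\in X^*$ let $H_\eta$ be the functional $\psi\mapsto\int_K\langle\pi(x)\psi(x),\eta\rangle\,dx$ on $L^2(K;X)$, of norm $\le\|\eta\|_{X^*}$. A one-line computation gives, for every compactly supported complex measure $m$ on $K$,
\[
\langle(\lambda(m)\otimes\mathrm{Id}_X)\Xi_\xi,\,H_\eta\rangle=\langle\pi(m)\xi,\eta\rangle ,
\]
and in particular $\|\pi(m)\|_{B(X)}\le\|(\lambda(m)\otimes\mathrm{Id}_X)\|_{B(L^2(K;X))}$. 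This lets one trade matrix coefficients of $\pi$ for coefficients of the regular representation $\lambda\otimes\mathrm{Id}_X$.

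Next I would put $\Phi(k)$ in orbit-integral form. Expanding $f$ in the averaging identity and disintegrating the Haar measure of $U$ over $U/U_k\cong\mathcal{O}_k:=U\cdot k$ (using that $u\mapsto\langle\pi(u\cdot k)\xi_i,\eta_i\rangle$ factors through $U/U_k$) yields
\[
\Phi(k)=\sum_i\int_{\mathcal{O}_k}\langle\pi(y)\xi_i,\eta_i\rangle\,G_i^{(k)}(y)\,d\mu_k(y),
\]
where $\mu_k$ is the image of Haar measure under $u\mapsto u\cdot k$ (so $\int_U\lambda(u\cdot k)\,du=\lambda(\mu_k)$) and $G_i^{(k)}(y):=P_k\rho(u)^{-1}v_i\in V_k$ for any $u$ with $u\cdot k=y$ (well defined since $P_k$ kills the orthogonal complement of the $\rho(U_k)$-fixed vectors). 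Now compare with the same formula at $k_0$. The hypothesis $\dim V_k=\dim V_{k_0}$ is exactly what forces, for $k$ near $k_0$, the stabilisers $U_k$ to converge to $U_{k_0}$: by upper semicontinuity of stabilisers the limiting subgroup has $\rho$-fixed space of dimension $\ge d$, and equality forces that fixed space to be exactly $V_{k_0}$. Using this, together with the real-analyticity (hence tameness) of the action and the smoothness of the finite-dimensional representation $\rho$, one gets that $U_k$, $P_k$, $V_k$, $\mu_k$ and the weights $G_i^{(k)}$ all depend on $k$ in a Lipschitz way near $k_0$ on $\{\dim V_\bullet=d\}$. Splitting off the resulting errors, each $\le(\mathrm{const})\cdot d(k,k_0)$, one is left with a difference in which the $\rho$-data at $k$ and at $k_0$ have been identified; after applying the coefficient device and absorbing the (finitely many, $V_k$-valued) weights, this difference is a finite sum of terms $\langle((\lambda(\mu_k)-\lambda(\mu_{k_0}))\otimes\mathrm{Id}_X)\Xi_{\xi_i},H_{\eta_i}\rangle$, whose absolute value is at most $\|(\int_U(\lambda(u\cdot k)-\lambda(u\cdot k_0))\,du)\otimes\mathrm{Id}_X\|_{B(L^2(K;X))}$ times $\sum_i\|\xi_i\|_X\|\eta_i\|_{X^*}\|v_i\|_V\le1$. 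Adding the two contributions and folding into $C_{V,k_0}$ the constants that appeared (Lipschitz constants of $\rho$ and of the stabiliser/orbit/weight data near $k_0$, $\dim V$, and the reciprocal of the cutoff radius) finishes the proof.

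The \emph{main obstacle} is this comparison step for non-abelian $U$. When $U$ is abelian, as in \cite{delasalle1}, $\rho$ is a sum of characters, the weights $G_i^{(k)}$ are essentially scalar, and the $U$-orbits are cosets of a fixed subtorus, which makes both the Lipschitz dependence of the data on $k$ and the passage to the measures $\mu_k$ transparent. For general compact $U$ one must control, uniformly as $k$ ranges over $\{\dim V_\bullet=d\}$ near $k_0$, how nontrivial irreducible representations of $U$ sit inside the $L^2$-spaces of the orbits $\mathcal{O}_k\subset K$ — orbits which are positive-dimensional and of zero Haar measure when of positive codimension — and then transport this realisation compatibly with $\pi$ and with $\lambda\otimes\mathrm{Id}_X$. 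Making this precise, and in particular establishing the Lipschitz continuity of $k\mapsto(U_k,P_k,\mu_k,G_i^{(k)})$ that underlies the reduction to the single universal quantity on the right-hand side, is the technical heart of the argument.
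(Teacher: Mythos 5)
There is a genuine gap, and it sits exactly where you yourself flag ``the technical heart'': your argument never actually reduces the $\rho$-equivariant situation to the \emph{unweighted} operator $\int_U(\lambda(u\cdot k)-\lambda(u\cdot k_0))\,du\otimes\mathrm{Id}_X$ appearing in the statement. In your orbit-integral form the integrand carries the $V$-valued weights $G_i^{(k)}(y)=P_k\rho(u)^{-1}v_i$, which are non-constant along the orbit whenever $V$ is nontrivial. Your transference identity $\langle(\lambda(m)\otimes\mathrm{Id}_X)\Xi_\xi,H_\eta\rangle=\langle\pi(m)\xi,\eta\rangle$ applies to scalar measures $m$; applied componentwise to the weighted integrals it produces operators of the form $\int_U w(u)\lambda(u\cdot k)\,du\otimes\mathrm{Id}_X$ with the weights \emph{inside} the integral, and the norm of a weighted difference of orbital averages is not controlled by the norm of the unweighted one. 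So the sentence ``after applying the coefficient device and absorbing the weights, this difference is a finite sum of terms $\langle((\lambda(\mu_k)-\lambda(\mu_{k_0}))\otimes\mathrm{Id}_X)\Xi_{\xi_i},H_{\eta_i}\rangle$'' is unjustified as it stands. In addition, the Lipschitz dependence you invoke for the whole package $(U_k,P_k,\mu_k,G_i^{(k)})$ near $k_0$ on $\{\dim V_\bullet=d\}$ is not proved and is stronger than what the hypothesis gives: the assumption only constrains $\dim V_k$, the fixed space of $\rho(U_k)$, and does not prevent the stabilizer $U_k$ itself (its dimension, conjugacy class, hence the orbit $\mathcal O_k$ and the measure $\mu_k$) from jumping; comparing weights on two different orbits also requires a correspondence between the orbits that you never construct, and all constants must be uniform over $X$, $\pi$ and the data $\xi_i,\eta_i,v_i$.

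For contrast, the paper resolves precisely these two points by a different mechanism. It constructs (Lemma \ref{lem:psi}) a Lipschitz map $\psi\colon K\to B(V)$ with $\psi(u\cdot k)=\psi(k)\rho(u^{-1})$, with all matrix coefficients of $\psi$ being coefficients of the regular representation $\lambda$ (via Peter--Weyl density), and with $\psi(k_0)=P_{k_0}$. Multiplying by $\psi(k)$ makes $k\mapsto\psi(k)f(\pi(k))$ genuinely $U$-invariant, and --- because the weights are now coefficients of $\lambda$ --- they are absorbed into vectors of $L^2(K;X)$ rather than into the measure, so the trivial-representation estimate (\cite[Proposition 2.7]{delasalle1}) applies and yields exactly the unweighted operator of the statement. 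The remaining error $\|\psi(k)-P_k\|\,\|f(\pi(k))\|$ is handled by Lemma \ref{lem=P_k_Lipschitz}, where the hypothesis $\dim V_k=\dim V_{k_0}$ enters through a rank-plus-functional-calculus argument on $\psi(k)^*\psi(k)$; no continuity of the stabilizers $U_k$, orbits or orbital measures is ever needed. If you want to salvage your route, you would have to supply an analogue of both of these ingredients: a way to turn the orbit weights into coefficients of $\lambda$ (or otherwise eliminate them before invoking the operator norm), and a proof that $P_k$ alone --- not $U_k$ --- varies Lipschitz-continuously on $\{\dim V_\bullet=d\}$.
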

We do not know whether the assumption $\mathrm{dim}(V_k)=\mathrm{dim}(V_{k_0})$ is necessary. At least in the case of $K=\mathrm{SO(n)}$ and $U=\mathrm{SO}(n-1) \times \mathrm{SO}(n-1)$ (Lemma \ref{lem=coefficients_nonbi-invariant_Banach_rep_SOn}) of the case of \cite{delaatdelasalle1} the proposition holds without this assumption, because the norm in $B(L^2(K))$ of $\int_{U} (\lambda(u \cdot k) - \lambda(u \cdot k_0))du$ is greater than $1$ otherwise. 

We will use the following lemma.
\begin{lem}\label{lem:psi} Let $K,k_0,U$ and $V$ be as above. Then there exists a Lipschitz map $\psi \colon K \to B(V)$ such that 
\begin{enumerate}
\item\label{item:equivariance} $\psi(u \cdot k) = \psi(k) \rho(u^{-1})$ for every $u \in U$,
\item\label{item:coeff} for $v_1,v_2 \in V$, the functions $\langle \psi(\cdot) v_1,v_2\rangle$ are coefficients of $\lambda$,
\item\label{item:id_V0} $\psi(k_0) = P_{k_0}$.
\end{enumerate}
\end{lem}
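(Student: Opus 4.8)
The plan is to build $\psi$ explicitly from the geometry of the action of $U$ on $K$ near the orbit of $k_0$, and then to extend it to all of $K$ by a cutoff argument. First I would work locally around $k_0$. Let $U_{k_0}$ be the stabiliser of $k_0$; then the orbit $U \cdot k_0$ is identified with $U/U_{k_0}$, and a tubular neighbourhood of the orbit looks like $(U/U_{k_0}) \times B$ for a small transversal ball $B$. On the orbit itself, the constraint \eqref{item:equivariance} together with the normalisation \eqref{item:id_V0} forces the value of $\psi$: writing a point of the orbit as $u \cdot k_0$, we must have $\psi(u\cdot k_0) = P_{k_0}\rho(u^{-1})$. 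This is well-defined precisely because $V_{k_0}$ is the space of $\rho(U_{k_0})$-invariant vectors, so $P_{k_0}\rho(w^{-1}) = P_{k_0}$ for $w \in U_{k_0}$, and hence the formula does not depend on the choice of representative $u$. I would then extend off the orbit in the transversal direction by declaring $\psi$ to be constant (equal to $P_{k_0}\rho(u^{-1})$) along the transversal slice through $u\cdot k_0$; equivalently, compose $\psi$ on the tube with the retraction onto the orbit. This gives a $U$-equivariant, Lipschitz map defined on a neighbourhood $W$ of $U\cdot k_0$.

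Next I would globalise. Choose a $U$-invariant Lipschitz cutoff function $\chi \colon K \to [0,1]$ (obtained by averaging an ordinary cutoff over $U$, using that $K$ is compact and $d$ is left-invariant so the averaging does not destroy the Lipschitz constant) which equals $1$ near $U\cdot k_0$ and is supported in $W$, and set $\psi := \chi \cdot (\psi|_W)$ on $W$ and $\psi := 0$ outside $W$. Equivariance \eqref{item:equivariance} is preserved because both $\chi$ and $\psi|_W$ are equivariant in the required sense ($\chi$ being $U$-invariant and $\psi|_W$ satisfying $\psi(u\cdot k) = \psi(k)\rho(u^{-1})$), the Lipschitz property is preserved since $\chi$ and $\psi|_W$ are bounded and Lipschitz, and \eqref{item:id_V0} holds since $\chi \equiv 1$ near $k_0$, so $\psi(k_0) = \psi|_W(k_0) = P_{k_0}\rho(e) = P_{k_0}$.

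The remaining point is \eqref{item:coeff}: for fixed $v_1,v_2 \in V$, the scalar function $k \mapsto \langle \psi(k)v_1,v_2\rangle$ must be a coefficient of the left regular representation $\lambda$ of $K$ on $L^2(K)$. Recall that a continuous function $\varphi$ on $K$ is a coefficient of $\lambda$ if and only if it can be written as $\langle \lambda(k)\xi,\eta\rangle$ for $\xi,\eta \in L^2(K)$; since $\lambda$ is the regular representation, the span of its coefficients is exactly (the continuous functions in) the closed linear span of translates, which by Peter--Weyl is all of $C(K)$ viewed appropriately — more precisely every coefficient of every finite-dimensional representation of $K$ is a coefficient of $\lambda$, and coefficients of $\lambda$ form an algebra closed under the relevant operations. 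The strategy here is to note that $\psi$, being Lipschitz with values in the finite-dimensional space $B(V)$, has matrix entries in $C(K)$, and to express each such entry as a coefficient of $\lambda$ by writing $\langle\psi(k)v_1,v_2\rangle = \int_K \langle \psi(s)v_1,v_2\rangle\,\overline{h(s)}\,(\lambda(k)g)(s)\,ds$ for suitable $g,h \in L^2(K)$ built from the local structure of $\psi$ near the orbit. I expect this last verification to be the main obstacle: one has to choose the cutoff and the transversal extension carefully enough that the resulting matrix entries genuinely lie in the (not a priori all of $C(K)$) subspace of $\lambda$-coefficients. The cleanest route is probably to arrange that $\psi$ factors through finitely many right translations on $K$ composed with evaluation against fixed vectors, so that $\langle\psi(\cdot)v_1,v_2\rangle$ is visibly a finite sum of functions of the form $k\mapsto \langle\lambda(k)\xi,\eta\rangle$.
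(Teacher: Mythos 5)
Your construction gets conditions (1) and (3) essentially for free (the forced formula $\psi(u\cdot k_0)=P_{k_0}\rho(u^{-1})$ on the orbit is well defined for exactly the reason you give, and composing with a $U$-equivariant retraction of a tube onto the orbit, times a $U$-invariant cutoff, does preserve the equivariance), but condition (2) is a genuine gap, and you have correctly identified it as the obstacle without closing it. The point is that the coefficients of $\lambda$ form the Fourier algebra $A(K)$, which is a \emph{proper} dense subspace of $C(K)$; neither continuity nor Lipschitz regularity puts a function in $A(K)$ when $\dim K\geq 2$ (by the sharpness of the Bernstein--Sz\'asz type theorems, e.g.\ on $\mathbb{T}^d$ one needs H\"older exponent $>d/2$). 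So the sentence suggesting that Peter--Weyl makes ``all of $C(K)$ viewed appropriately'' into $\lambda$-coefficients is misleading, and the explicit cutoff-plus-retraction map you build, being merely Lipschitz, need not satisfy (2) at all. The sketched remedies (choosing $g,h\in L^2(K)$ ``built from the local structure'', or arranging that $\psi$ ``factors through finitely many right translations'') are not carried out and it is not clear how they would be. Your construction could be repaired by making everything smooth (smooth equivariant tubular neighbourhood, smooth cutoff; the formula on the orbit is automatically smooth since $\rho$ is a finite-dimensional representation) and then invoking that $C^\infty(K)\subset A(K)$ for a compact Lie group, but as written the key property is unproved.

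For comparison, the paper avoids the issue by never prescribing $\psi$ pointwise: it starts from functions $\phi$ whose matrix entries are coefficients of finite-dimensional representations of $K$ (so (2) and smoothness are automatic), obtains equivariance by averaging, $\psi(k)=\int_U\phi(u\cdot k)\rho(u)\,du$, and then achieves $\psi(k_0)=P_{k_0}$ not by an explicit formula but by a density-and-rank argument (a measurable section of the orbit gives $\int\phi(u\cdot k_0)\rho(u)\,du=P_{k_0}$ exactly, one approximates to keep the rank equal to $\dim V_{k_0}$ within the allowed class, and finally corrects by left multiplication with a fixed operator $A$). Your approach inverts this order --- construct first, verify membership in the coefficient class afterwards --- and that last verification is precisely what is missing.
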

\begin{proof}
Denote by $F$ the set of functions $\phi \colon K \to B(V)$ such that for all $v_1,v_2 \in V$, the function $\langle \phi(\cdot) v_1,v_2\rangle$ for $v_1,v_2 \in V$ is a coefficient of a finite-dimensional representation of $K$. Note that every function in $F$ is  $C^\infty$ (and hence Lipschitz) and satisfies \eqref{item:coeff} by the Peter-Weyl theorem. For a continuous function $\phi \colon K \to B(V)$, consider the function $\psi(k) = \int_U \phi(u\cdot k)\rho(u) du$. Then $\psi \in F$ if $\varphi \in F$ and $\psi(u \cdot k) =\psi(k) \rho(u^{-1})$. In particular, for $u \in U_{k_0}$ we have $\psi(k_0) = \psi(k_0) \rho(u^{-1}) $ and $\psi(k_0)$ vanishes on the orthogonal complement of $V_{k_0}$. We claim that there is a choice of $\phi \in F$ such that $\psi(k_0)$ has rank $\mathrm{dim(V_{k_0})}$. Before we prove the claim, let us explain how it implies the lemma. First, by replacing $\phi$ by $A \phi$ for some $A \in B(V)$ satisfying $A \psi(k_0)=P_{k_0}$ (such an $A$ exists because $\psi(k_0)$ has the same kernel as $P_{k_0}$), we can assume that $\psi(k_0) = P_{k_0}$, so that (\ref{item:id_V0}) holds. We already explained that $\psi$ is Lipschitz and that (\ref{item:equivariance}) and (\ref{item:coeff}) hold. Hence $\psi$ satisfies all the conditions in the lemma.

Let us now prove the claim. Denote by $O \subset K$ the $U$-orbit of $k_0$ (this is a closed subset of $K$), and let $s \colon O  \to U$ be a measurable section, i.e.~a measurable map satisfying $s(u \cdot k_0) \in u U_{k_0}$ for every $u \in U$. Then if $\phi \colon O \to B(V)$ is defined by $\phi(x) = \rho( s(x))^{-1}$, we see that $\int \phi(u \cdot k_0) \rho(u) du = \int  \rho( s(u \cdot k_0)^{-1} u) du$ acts as the identity on $V_{k_0}$, and therefore, since it vanishes on $V_{k_0}^\perp$, it is equal to $P_{k_0}$. By a density argument, this implies that there is a continuous function $\phi \colon O \to B(V)$ such that $\int \phi(u \cdot k_0) \rho(u) du$ is arbitrarily close to $P_{k_0}$, and in particular it has rank $\mathrm{dim}(V_{k_0})$. By the Tietze extension theorem, we can extend $\phi$ to a continuous function on $K$. For this $\phi$, it holds that $\psi(k_0)$ has rank $\mathrm{dim}(V_{k_0})$. By the density of $F$ in the space of continuous functions from $K$ to $B(V)$, the claim follows. 
\end{proof}
The next lemma is where the assumption  $\mathrm{dim}(V_k)=\mathrm{dim}(V_{k_0})$ is used.
\begin{lem}\label{lem=P_k_Lipschitz} There exists a constant $C_{V,k_0}>0$ such that 
\[\| P_k - P_{k_0} \|\leq C_{V,k_0} d(k,k_0)\]
for every $k$ such that $\mathrm{dim}(V_{k_0}) = \mathrm{dim}(V_{k})$.
\end{lem}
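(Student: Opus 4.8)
The plan is to realise $P_k$, for $k$ close to $k_0$, as a spectral projection of a self-adjoint operator depending Lipschitz-continuously on $k$, built from the map $\psi$ of Lemma~\ref{lem:psi}. First I would set $Q_k := \psi(k)^{*}\psi(k) \in B(V)$. Applying the equivariance relation $\psi(u\cdot k) = \psi(k)\rho(u^{-1})$ of Lemma~\ref{lem:psi}\eqref{item:equivariance} to $u \in U_k$ gives $\psi(k) = \psi(k)\rho(u^{-1})$ for all $u \in U_k$, so $\psi(k)$ annihilates the span of the $\rho(u)v - v$ with $u \in U_k$, $v \in V$, which (as $\rho$ is unitary and $U_k$ is a compact group) is exactly $V_k^{\perp}$; hence $\psi(k) = \psi(k)P_k$. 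Taking adjoints, $\psi(k)^{*} = P_k\psi(k)^{*}$, so that $\operatorname{range}(Q_k) \subseteq \operatorname{range}(\psi(k)^{*}) \subseteq V_k$; in particular $\operatorname{rank}(Q_k) \le \dim V_k$. Moreover $Q_k \ge 0$ is self-adjoint, $Q_{k_0} = P_{k_0}^{*}P_{k_0} = P_{k_0}$ by Lemma~\ref{lem:psi}\eqref{item:id_V0}, and since $\psi$ is continuous (hence bounded) and Lipschitz on the compact group $K$, the map $k \mapsto Q_k$ is Lipschitz, say $\|Q_k - Q_{k'}\| \le L_0\, d(k,k')$ for all $k,k'$.

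Next I would locate the spectrum of $Q_k$ near $k_0$. Put $d := \dim V_{k_0}$, fix $\epsilon_0 \in (0,\tfrac12)$, and let $\mathcal{O} := \{k \in K : d(k,k_0) < \epsilon_0/L_0\}$, so that $\|Q_k - P_{k_0}\| < \epsilon_0$ for $k \in \mathcal{O}$. By Weyl's perturbation inequality the eigenvalues of $Q_k$ lie within $\epsilon_0$ of those of $P_{k_0}$, so exactly $d$ of them lie in $(1-\epsilon_0,1+\epsilon_0)$ and the remaining $\dim V - d$ lie in $[0,\epsilon_0)$. \textbf{This is the one place where the hypothesis $\dim V_k = \dim V_{k_0}$ enters}: it forces $\operatorname{rank}(Q_k) \le \dim V_k = d$, so the latter $\dim V - d$ eigenvalues must all equal $0$. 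Hence for $k \in \mathcal{O}$ with $\dim V_k = d$ one has $\operatorname{spec}(Q_k) \subseteq \{0\} \cup (1-\epsilon_0,1+\epsilon_0)$; the span of the eigenvectors of $Q_k$ with nonzero eigenvalue then has dimension $d$ and is contained in $V_k$, hence equals $V_k$, so $P_k$ is the Riesz spectral projection of $Q_k$ associated with the circle $\Gamma = \{z : |z-1| = \tfrac12\}$:
\[
  P_k = \frac{1}{2\pi i}\oint_{\Gamma} (z - Q_k)^{-1}\, dz .
\]

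The remaining estimate is then routine perturbation theory. For $k, k' \in \mathcal{O}$ with $\dim V_k = \dim V_{k'} = d$, the circle $\Gamma$ lies at distance at least $\tfrac12 - \epsilon_0$ from $\operatorname{spec}(Q_k) \cup \operatorname{spec}(Q_{k'})$, so the resolvents on $\Gamma$ are bounded by $(\tfrac12-\epsilon_0)^{-1}$, and the second resolvent identity gives
\[
  \|P_k - P_{k'}\| = \Bigl\| \tfrac{1}{2\pi i}\oint_{\Gamma}(z-Q_k)^{-1}(Q_k - Q_{k'})(z-Q_{k'})^{-1}\,dz\Bigr\| \le \frac{\|Q_k - Q_{k'}\|}{2(\tfrac12-\epsilon_0)^{2}} \le C'\, d(k,k').
\]
In particular $\|P_k - P_{k_0}\| \le C'\,d(k,k_0)$ whenever $k \in \mathcal{O}$ and $\dim V_k = d$; for $k \notin \mathcal{O}$ with $\dim V_k = d$ the trivial bound $\|P_k - P_{k_0}\| \le 2 \le (2L_0/\epsilon_0)\,d(k,k_0)$ applies since then $d(k,k_0) \ge \epsilon_0/L_0$. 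Taking $C_{V,k_0} = \max\{C', 2L_0/\epsilon_0\}$ finishes the argument. I expect the only genuine subtlety to be the one highlighted above: without the equal-dimension assumption the ``small'' eigenvalues of $Q_k$ need not vanish, $\operatorname{range}(Q_k)$ may be a proper subspace of $V_k$ (when $\dim V_k > d$), and the identification of $P_k$ with a spectral projection of $Q_k$ breaks down — so this hypothesis is exactly what makes the spectral-gap trick legitimate.
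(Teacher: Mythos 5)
Your argument is correct and is essentially the paper's proof: both consider $\psi(k)^{*}\psi(k)$, use the equivariance of $\psi$ together with $\dim V_k=\dim V_{k_0}$ to force the small eigenvalues to vanish and identify $P_k$ as the spectral projection onto the eigenvalues near $1$, and both dispose of $k$ far from $k_0$ by the trivial bound $\|P_k-P_{k_0}\|\leq 2$. The only cosmetic difference is the final perturbation step, where the paper applies a Lipschitz function $f$ (with $f(0)=0$, $f\equiv 1$ near $1$) via functional calculus on self-adjoint matrices, while you use the equivalent Riesz contour-integral and resolvent-identity estimate.
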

\begin{proof}
Let $\psi$ be a function given by Lemma \ref{lem:psi}, with Lipschitz constant $Lip(\psi)$. Take $k$ such that $\mathrm{dim}(V_{k_0}) = \mathrm{dim}(V_{k})$; denote by $d$ this common dimension. By (\ref{item:equivariance}) in Lemma \ref{lem:psi}, $\psi(k)$ vanishes on the orthogonal complement of $V_k$, and therefore $\psi(k)$ has rank at most $d$. If $d(k,k_0)\geq ( 10 Lip(\psi))^{-1}$ the inequality is obvious with $C_{V_0,k}= 20 Lip(\psi)$ because $\| P_k - P_{k_0} \| \leq 2$. We can therefore assume that $d(k,k_0)< ( 10 Lip(\psi))^{-1}$. This implies that $\|\psi(k) - P_{k_0}\|<1/10$, and therefore $\|\psi(k)^* \psi(k) - P_{k_0}\| < \frac 1 3$. This implies that the self-adjoint matrix $\psi(k)^* \psi(k)$ has $d$ eigenvalues in the interval $[\frac{2}{3}, \frac{4}{3}]$ and $\mathrm{dim}(V)-d$ eigenvalues in the interval $[0,\frac{1}{3}]$.  By a rank consideration the $\mathrm{dim}(V)-d$ smallest eigenvalues vanish, and the eigenvectors corresponding to the $d$ eigenvalues in $[\frac{2}{3}, \frac{4}{3}]$ span $V_k$. If $f$ is a Lipschitz function on $\R$ which is equal to $0$ at $0$ and $1$ on $[\frac{2}{3},\frac{4}{3}]$ we therefore have $f(\psi(k)^* \psi(k)) = P_k$ and $f(P_{k_0}) = P_{k_0}$. The conclusion follows because $A \mapsto f(A)$ is Lispschitz on the self-adjoint linear maps on $V$.
\end{proof}

\begin{proof}[Proof of Proposition \ref{prop:nonequivariant}] If $V$ is the trivial representation, we have
\[
  \|f(\pi(k)) - f(\pi(k_0))\|_V \leq \left\| \left(\int_{U} (\lambda(u \cdot k) - \lambda(u \cdot k_0))du \right) \otimes \mathrm{Id}_X \right\|_{B(L^2(K;X))}
\] 
for all $k \in K$ by \cite[Proposition 2.7]{delasalle1}.

If $V$ is not the trivial representation, we can reduce to the case of the trivial representation. Indeed, let $\psi$ be as in Lemma \ref{lem:psi}. Since $\psi(u \cdot k) = \psi(k) \rho(u^{-1})$ for all $k \in K$ and $u \in U$, the map from $K$ to $V$ given by $k \mapsto \psi(k) f(\pi(k))$ is $U$-invariant and we claim that it can be decomposed as $k \mapsto \sum_{i=1}^m \langle (\lambda \otimes \pi)(k) \tilde{\xi}_i, \tilde{\eta}_i\rangle w_i$ for $\sum_{i=1}^m \|\tilde{\xi}_i\|_{L^2(K;X)} \|\tilde{\eta}_i\|_{L^2(K;X^*)} \|w_i\|_V \leq C$, where $C$ depends on $\psi$ only. This decomposition can be obtained by taking an orthonormal basis $(e_1,\dots,e_d)$ of $V$ and writing each function $\langle \psi(\cdot) v_i, e_j\rangle$ as a coefficient of the left-regular representation $\lambda$ of $K$ as follows:~$\langle \psi(\cdot) v_i, e_j\rangle= \langle \lambda(\cdot) a_{i,j},b_{i,j}\rangle$ for some $a_{i,j},b_{i,j} \in L^2(K)$ of norm less than $(C \|v_i\|)^{\frac 1 2}$. This gives that $\psi(k) v_i = \sum_{j=1}^d \langle \lambda(\cdot) a_{i,j},b_{i,j}\rangle e_j$ and hence 
\[ \psi(k) f(\pi(k)) = \sum_{i=1}^n\sum_{j=1}^d \langle (\lambda \otimes \pi)(k) a_{i,j} \otimes \xi_i,b_{i,j} \otimes \eta_j\rangle e_j,\]
which is of the announced form with $m=nd$. Observe also that for all $k \in K$,
\begin{equation}\label{eq:fk_in_Vk} f(\pi(k)) \in V_k\end{equation} because for every $u \in U_k$, $\rho(u) f(\pi(k)) = f(\pi(u \cdot k)) = f(\pi(k))$. Therefore, by the previous case for the representation $\lambda \otimes \pi$ and by \eqref{item:id_V0} of Lemma \ref{lem:psi},
\begin{equation} \label{eq=psi}
\begin{split}
  \| \psi(k)f(\pi(k)) - &f(\pi(k_0))\|_V \leq \\ &C \left\| \left(\int_{U} (\lambda(u \cdot k) - \lambda(u \cdot k_0))du\right) \otimes \mathrm{Id}_X \right\|_{B(L^2(K;X))}
\end{split}
\end{equation}
for all $k \in K$. By using the triangle inequality and \eqref{eq:fk_in_Vk}, we obtain
\[
  \|f(\pi(k)) - f(\pi(k_0))\|_V \leq \| \psi(k)f(\pi(k)) - f(\pi(k_0))\|_V + \|\psi(k)-P_k\|_{B(V)} \|f(\pi(k))\|_V
\]
for all $k \in K$. Estimating the first term by \eqref{eq=psi}, using the fact that $\psi$ is a Lipschitz map and Lemma \ref{lem=P_k_Lipschitz}, which implies that 
\[\|\psi(k)-P_k\|_{B(V)} \leq \|\psi(k)-\psi(k_0)\|_{B(V)} + \|P_{k_0}-P_k\|_{B(V)} \leq C'd(k,k_0)\] for some constant $C'>0$, and using the inequality $\|f(\pi(k))\| \leq 1$ for all $k \in K$, the proposition follows. 
\end{proof}
\section{Strong property (T), property (F$_X$) and property (FF$_X$)} \label{sec=properties}
Recall that in this article, representations of locally compact groups on Banach spaces are always assumed to be linear and strongly continuous, unless explicitly stated otherwise.

\subsection{Strong property (T)}\label{subsect:strongTdef}
Strong property (T) was introduced by Lafforgue as an obstruction to a certain approach to the Baum--Connes Conjecture \cite{lafforguestrongt,lafforguefastfourier}. It is defined in terms of the existence of a self-adjoint idempotent (the Kazhdan projection) in a certain completion of the space of compactly supported continuous functions on the group. In this article, we use the following more flexible definition of strong property (T), as introduced by the third-named author in \cite{delasalle1}. Recall first that a length function on a locally compact group $G$ is a continuous function $\ell \colon G \to \R_+$ such that $\ell(g_1^{-1})= \ell(g_1)$ and $\ell(g_1g_2)\leq \ell(g_1)+\ell(g_2)$ for all $g_1,g_2 \in G$.
\begin{dfn} \label{def=strongt}
A locally compact group $G$ has strong property (T) with respect to a class $\mathcal{E}$ of Banach spaces (denoted by (T$^{\mathrm{strong}}_{\mathcal{E}}$)) if for every length function $\ell$ on $G$ there is a sequence of compactly supported symmetric Borel measures $m_n$ on $G$ such that for every Banach space $X$ in $\mathcal E$ there exists a constant $t>0$ such that the following holds: for every representation $\pi:G \rightarrow B(X)$ with $\|\pi(g)\|_{B(X)} \leq L e^{t \ell(g)}$ for some $L\in\R_+$, the sequence $\pi(m_n)$ converges in the norm topology on $B(X)$ to a projection onto the $\pi(G)$-invariant vectors of $X$.
\end{dfn}
If one takes $\mathcal{E}$ to be the class of Hilbert spaces, this definition is equivalent to Lafforgue's original definition of strong property (T), which is denoted by (T$^{\mathrm{strong}}_{\mathrm{Hilbert}}$). If $\mathcal{E}$ is taken to be the class of Banach spaces with nontrivial type, the above definition is equivalent to Lafforgue's strong Banach property (T), denoted (T$^{\mathrm{strong}}_{\mathrm{Banach}}$). The equivalences are shown in \cite[Section 2.9]{delasalle1}. We sometimes use the notation (T$^{\mathrm{strong}}_{X}$) for strong property (T) with respect to the isomorphism class of $X$.

The work in \cite{lafforguestrongt} implies that connected simple Lie groups with real rank $1$ do not have (T$^{\mathrm{strong}}_{\mathrm{Hilbert}}$). Furthermore, Lafforgue showed that $\mathrm{SL}(3,\R)$ has (T$^{\mathrm{strong}}_{\mathrm{Hilbert}}$), and the third-named author proved that $\mathrm{SL}(3,\R)$ has (T$^{\mathrm{strong}}_{\mathcal{E}_4}$) in \cite{delasalle1}, where, for $r > 2$, the class $\mathcal{E}_r$ is a certain class of Banach spaces containing the Hilbert spaces, many superreflexive spaces and some non-reflexive spaces. In \cite{delaatdelasalle1}, the first-named author and the third-named author extended this result, by proving that connected simple Lie groups with real rank at least $2$ have (T$^{\mathrm{strong}}_{\mathcal{E}_{10}}$).

\subsection{Property (F$_X$)}
\begin{dfn}
  Let $X$ be a Banach space. A locally compact group $G$ has property (F$_X$) if every continuous affine isometric action of $G$ on $X$ has a fixed point.
\end{dfn}
This property was introduced in \cite{baderfurmangelandermonod}. In that article, it is proved that if $G$ is a connected simple Lie group with real rank at least $2$ and finite center or a lattice in such a group, then $G$ has property (F$_X$) for every subspace or quotient $X$ of an arbitrary $L^p$-space, where $1<p<\infty$ (see \cite[Theorem B]{baderfurmangelandermonod} for a more general and precise statement). This result and its proof were the motivation for the conjecture of Bader, Furman, Gelander and Monod mentioned in Remark \ref{rem=fx}, (i).

We now explain that Theorem \ref{thm=fpp} implies Corollary \ref{cor=fpplattices}.
\begin{proof}[Proof of Corollary \ref{cor=fpplattices}]
Let $G$ be a locally compact group, and let $\Gamma$ be a lattice in $G$, i.e.~a discrete subgroup with finite invariant measure. It was proved in \cite[Proposition 8.8]{baderfurmangelandermonod} that (F$_{X}$) for $\Gamma$ follows from (F$_{L^2(G/\Gamma;X)}$) for $G$ provided that $\Gamma$ is a so-called $2$-integrable lattice in $G$ (see \cite{baderfurmangelandermonod} or \cite{delaatdelasalle1} for definitions). Also, Shalom proved that all lattices in a higher rank algebraic group are $2$-integrable, and this was extended to simple Lie groups of higher rank in \cite[Proposition 7.1]{delaatdelasalle1}. Therefore, all we have to show to justify the implication Theorem \ref{thm=fpp} $\implies$ Corollary \ref{cor=fpplattices} is that if $\beta<\frac{1}{2}$ and $X$ is a Banach space satisfying \eqref{eq=d_kX_grows_slowly}, then also $L^2(\Omega,\mu;X)$ satisfies \eqref{eq=d_kX_grows_slowly} for every probability measure space $(\Omega,\mu)$. This is certainly well-known. One quick argument is to use inequality $(6)$ from \cite{delaatdelasalle2} which, together with the Fubini Theorem, implies that $d_k(L^2(\Omega,\mu;X)) \leq 2 d_k(X)$ for all $k \geq 1$.
\end{proof}

\subsection{Property (FF$_X$)} \label{subsec=ffx}
\begin{dfn}\label{def=quasicocycle}
Let $G$ be a locally compact group,  let $X$ be a Banach space, and let $\rho \colon G \to O(X)$ be an isometric representation. A continuous map $c\colon G\to X$ is called a quasi-$1$-cocycle into $\rho$ if 
\[
\sup_{g_1,g_2\in G}\|c(g_1g_2)-c(g_1)-\rho(g_1)c(g_2)\| <\infty.
\]
The above quantity is called the defect of the quasi-$1$-cocycle $c$.
\end{dfn}
\begin{dfn}\label{def=FFX}
A locally compact group $G$ is said to have property (FF$_X$) if for every isometric representation $\rho \colon G \to O(X)$, every quasi-$1$-cocycle $c\colon G\to X$ into $\rho$ is bounded.
\end{dfn}
Let us recall the relation of property (FF$_X$) to bounded cohomology. It is known that a locally compact group $G$ has property (FF$_X$) if and only if the following two statements are satisfied: 
\begin{enumerate}[(i)]
  \item for every isometric representation $\rho \colon G \to O(X)$, the comparison map 
\[
\Psi^2_{\mathrm{c}}\colon H^2_{\mathrm{cb}}(G;X,\rho) \to H^2_{\mathrm{c}}(G;X,\rho)
\]
from the second continuous bounded cohomology group to the second continuous cohomology group with coefficients in $(X,\rho)$ is injective; 
  \item for every continuous action of $G$ on $X$ by affine isometries, some (or equivalently every) $G$-orbit is bounded. 
\end{enumerate}
We refer to item (ii) in \cite[Corollary~13.1.10]{monod} for the proof of the above equivalence, and we refer the reader to \cite{monodicm} and \cite[Chapter~V.13]{monod} for a comprehensive treatment of quasi-cocycles and bounded cohomology.

Generally, for a Banach space $X$, the boundedness property (statement (ii) above) may be weaker than property (F$_X$). However, if $X$ satisfies the property that every continuous group action on $X$ by isometries with bounded orbits has a global fixed point, for instance, if $X$ is reflexive, then it is equivalent to the fixed point property (recall Remark~\ref{rem=fpffp}). We in addition mention that property (FF$_X$) is, in general, strictly stronger than property (F$_X$) for such an $X$. For example, certain infinite hyperbolic groups have property (T), and hence property (FH). On the other hand, it is well known that any infinite hyperbolic group fails to have property (TT) of Monod (see \cite{epsteinfujiwara} and \cite{mineyevmonodshalom}), which is the Hilbert space version of property (FF$_X$).

In our arguments, we need some more notions and certain relative versions of the properties recalled above. Firstly, we recall the following notion from \cite{baderfurmangelandermonod} (see also \cite{mimura}). Let $G$ be a compactly generated locally compact group (note that every connected locally compact group is compactly generated), and let $S$ be a compact generating set of $G$. Recall that an isometric representation $\pi \colon G \to O(Y)$ does \emph{not} have almost invariant vectors if there exists an $\epsilon=\epsilon(G,S,\pi)>0$ such that for any $\eta \in Y\setminus \{0\}$, we have $\sup_{s\in S}\|\pi(s)\eta-\eta\|>\epsilon \|\eta\|$.

\begin{dfn}\label{def=ffpp}
Let $G$ be a compactly generated locally compact group with compact generating set $S$, let $H$ be a closed subgroup of $G$, and let $X$ be a Banach space.
\begin{enumerate}
  \item Suppose, in addition, that $H$ is a normal subgroup of $G$. The pair \linebreak $G \triangleright H$ is said to have relative property (T$_X$) if the following condition is satisfied: for every isometric representation $\rho \colon G \to O(X)$, the corresponding induced representation $\overline{\rho}\colon G \to O(X/X^{\rho(H)})$ does not have almost invariant vectors. Here $X^{\rho(H)}$ is the subspace of $X$ consisting of all $\rho(H)$-invariant vectors. (Indeed, because $H$ is normal, $\rho$ induces an isometric representation on $X/X^{\rho(H)}$, which we write as $\overline{\rho}$.) We define $\kappa(G,H,S,\rho)$ as the supremum of the constants $\epsilon(G,S,\overline{\rho})$.

  The group $G$ has property (T$_X$) if $G \triangleright G$ has relative property (T$_X$).
  \item We say that the pair $G > H$ is said to have weak relative property (T$_X$) if the following holds: for any isometric representation $\rho \colon G \to O(X)$, there exists a strictly positive constant $\tilde{\epsilon}$ such that for any $\xi \in X$,
\[
 \sup_{s\in S} \|\rho(s)\xi -\xi\| \geq \tilde{\epsilon} \sup_{h\in H} \|\rho(h)\xi -\xi\|.
\]
We set $\tilde{\kappa}(G,H,S,\rho)$ as the supremum of such $\tilde{\epsilon}$.
  \item The pair $G>H$ has relative property (FF$_X$) if for every isometric representation $\rho \colon G \to O(X)$, any quasi-$1$-cocycle $c\colon G\to X$ into $\rho$ is bounded on $H$.
  
  The group $G$ has property (FF$_X$) in the sense of Definition~\ref{def=FFX} if $G>G$ has relative property (FF$_X$).
\end{enumerate}
\end{dfn}
Let us point out that for a reflexive Banach space $X$, relative property (T$_X$) for $G \triangleright H$ coincides with weak relative property (T$_X$) for $G \triangleright H$. Indeed, in that case, for any isometric representation $\rho\colon G\to O(X)$ and for any $\xi \in X$, the inequality
\[
  2 \|\overline{\xi}\|_{X/X^{\rho(H)}} \geq \sup_{h\in H} \|\rho(h)\xi -\xi\|_X \geq \|\overline{\xi}\|_{X/X^{\rho(H)}}
\]
holds, where $\xi\mapsto \overline{\xi}$ denotes the quotient map $X\twoheadrightarrow X/X^{\rho(H)}$. The inequality on the right side uses the reflexivity assumption on $X$ (see Lemma~\ref{lem=dual}). On the other hand, the inequality on the left side always holds, and hence relative property (T$_X$) implies weak relative property (T$_X$) for any $X$. More precisely, $2\tilde{\kappa}(G,H,S,\rho)\geq \kappa(G,H,S,\rho) \geq \tilde{\kappa}(G,H,S,\rho)$ holds as long as $X$ is reflexive, and the inequality on the left side holds without any assumption on $X$. In general case, however, we do not know whether the latter property is strictly weaker. In Proposition~\ref{prp=fromttoffpp}, which is a key proposition in Section~\ref{sec=ffpp}, we only need to assume the latter property.

We remark that the properties above are independent of the choices of $S$, despite the fact that the constants $\kappa(G,H,S,\rho)$ and $\tilde{\kappa}(G,H,S,\rho)$ depend on $S$.

\section{Strong property (T) for high rank Lie groups} \label{sec=strongt}
In this section, we prove Theorem \ref{thm=strongt}. The main point is to prove the theorem for $G=\mathrm{SL}(N,\R)$ (Proposition \ref{prp=strongtsln}), in which case the measures $m_n$ appearing in Definition \ref{def=strongt} are the uniform measures on the sets $K g_n K$, where $K=\mathrm{SO}(N)$ is the maximal compact subgroup of $G$ and $(g_n)$ is a well-chosen sequence in $G$ tending to infinity. The fact that the sequence $\pi(m_n)$ has a limit in the norm topology for a specific sequence $g_n$ was essentially already proved in \cite{delaatdelasalle2}. The fact that, unlike in the rank two case (see \cite{lafforguestrongt,lafforguefastfourier,liao,delaatdelasalle1}), we cannot take any sequence $(g_n)$ tending to infinity, makes our task of identifying the limit with a projection onto the invariant vectors significantly harder. The main technical improvement compared to \cite{delaatdelasalle2} is that, at the cost of increasing the value of $N$, we manage to prove the convergence of $\pi(m_n)$ under much weaker conditions on the sequence $(g_n)$ (see \eqref{eq=convergence_in_D}). In particular, this allows us to show that $\lim_{k,n \to \infty} \pi(m_k \ast m_n)$ also converges to $\lim_n \pi(m_n)$, which was the first obstacle encountered in \cite{delaatdelasalle2} to prove strong property (T).

Fix $n \geq 3$. In what follows, operators, functions and constants may implicitly depend on $n$. Considering $\mathrm{SO}(n-1)$ as the subgroup of $\mathrm{SO}(n)$ fixing the first coordinate vector $e_1 \in \R^n$, and using the identification $\sphere^{n-1} \cong \mathrm{SO}(n-1) \backslash \mathrm{SO}(n)$ through $\mathrm{SO}(n-1) k \mapsto k^{-1} e_1$, we can view $L^2(\sphere^{n-1})$ as a subspace of $L^2(\mathrm{SO}(n))$. Let $\lambda$ denote the left-regular representation, and define the operator $T_\delta$ on $L^2(\mathrm{SO}(n))$ by $T_{\delta} = \int_{\mathrm{SO}(n-1) \times \mathrm{SO}(n-1)} \lambda( u k u') du du'$ for $k \in \mathrm{SO}(n)$ satisfying $k_{11} = \delta$. Here, $k_{11}$ denotes the entry in the first row and first column of $k$. The operator $T_{\delta}$ does not depend on the choice of $k$, since $\int_{\mathrm{SO}(n-1) \times \mathrm{SO}(n-1)} \lambda( u k u') du du'$ depends only on $k_{11}$. Note that $T_\delta$ maps the subspace $L^2(\sphere^{n-1})$ to itself and is zero on its orthogonal complement, so that $T_\delta$ can be considered as an operator on $L^2(\sphere^{n-1})$. For more details on and related use of the operator $T_{\delta}$, we refer to \cite{lafforguestrongt,lafforguedelasalle,delasalle1,delaatdelasalle1,delaatdelasalle2}. The following result is \cite[Lemma 5.4]{delaatdelasalle2}.
\begin{lem}\label{lem=estimates_Tdelta_X}
Let $n \geq 3$, and let $X$ be a Banach space for which there exist $\beta < \frac 1 2(1- \frac{1}{n-1})$ and $C'>0$ such that $d_k(X) \leq C' k^\beta$ for all $k \geq 1$. Then there exist $\alpha_X \in (0,1)$ and $C_X>0$ such that for all $\delta,\delta' \in [-1,1]$,
\begin{equation}\label{eq=ine_Tdelta_X} \| (T_{\delta} - T_{\delta'})\otimes \mathrm{Id}_X\|_{B(L^2(\mathrm{SO}(n);X))} \leq C_X \max(|\delta|^{\alpha_X}, |\delta'|^{\alpha_X}).\end{equation}
Moreover, $\alpha_X$ and $C_X$ depend on $\beta$ and $C'$ only.
\end{lem}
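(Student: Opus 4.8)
The plan is to diagonalise $T_\delta$ by the Peter--Weyl decomposition, turning $T_\delta-T_{\delta'}$ into a spherical Fourier multiplier, and then to pass from its (trivially computed) $L^2$-operator norm to its $L^2(X)$-operator norm by a dyadic frequency decomposition together with a Banach-space comparison lemma. First I would record two reductions. Since $T_\delta=\lambda(m_\delta)$, where $m_\delta$ is the probability measure on $\mathrm{SO}(n)$ obtained as the push-forward of the Haar measure of $\mathrm{SO}(n-1)\times\mathrm{SO}(n-1)$ under $(u,u')\mapsto uku'$ (with $k_{11}=\delta$), and $\lambda\otimes\mathrm{Id}_X$ is an isometric representation on $L^2(\mathrm{SO}(n);X)$, we have $\|T_\delta\otimes\mathrm{Id}_X\|\le1$ for every Banach space $X$ and every $\delta\in[-1,1]$. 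Hence the inequality is trivial with $C_X\ge4$ whenever $\max(|\delta|,|\delta'|)\ge\tfrac12$, and by the triangle inequality it suffices to prove $\|(T_\delta-T_0)\otimes\mathrm{Id}_X\|\le C_X|\delta|^{\alpha_X}$ for $|\delta|<\tfrac12$. Moreover, writing $T_\delta=E\lambda(k)E$ with $E=\int_{\mathrm{SO}(n-1)}\lambda(u)\,du$, the operator $E\otimes\mathrm{Id}_X$ is a contractive projection of $L^2(\mathrm{SO}(n);X)$ onto $L^2(\sphere^{n-1};X)$ (identifying $\sphere^{n-1}\cong\mathrm{SO}(n-1)\backslash\mathrm{SO}(n)$), and since $T_\delta$ annihilates its orthogonal complement and maps into it, one may work on $L^2(\sphere^{n-1};X)$ throughout.

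On $L^2(\sphere^{n-1})=\bigoplus_{\ell\ge0}\mathcal{H}_\ell$, with $\dim\mathcal{H}_\ell\asymp\ell^{n-2}$, the operator $T_\delta$ acts --- because $\sphere^{n-1}=\mathrm{SO}(n)/\mathrm{SO}(n-1)$ is a multiplicity-free homogeneous space --- as the Fourier multiplier $\bigoplus_\ell t_\ell(\delta)\,\mathrm{Id}_{\mathcal{H}_\ell}$, where $t_\ell$ is the spherical function of degree $\ell$, i.e.\ the normalised Gegenbauer polynomial of degree $\ell$ and index $\nu=\tfrac{n-2}{2}$. Thus $T_\delta-T_0$ is the multiplier with symbol $t_\ell(\delta)-t_\ell(0)$, and I would split it dyadically as $T_\delta-T_0=\sum_{j\ge0}\Delta_j$, where $\Delta_j$ carries the frequencies $\ell\in[2^j,2^{j+1})$ and is therefore supported on $W_j=\bigoplus_{2^j\le\ell<2^{j+1}}\mathcal{H}_\ell$, a subspace of dimension $k_j\asymp 2^{j(n-1)}$. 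Its $L^2$-operator norm is $\max_{2^j\le\ell<2^{j+1}}|t_\ell(\delta)-t_\ell(0)|$, and the classical estimates for Gegenbauer polynomials near $0$ --- namely $|t_\ell(\delta)|\le C_n\ell^{-\nu}$ for $|\delta|\le\tfrac12$, and $|t_\ell'(\delta)|\le C_n\ell^{1-\nu}$ on $[-\tfrac12,\tfrac12]$, the latter from Szeg\H{o}'s bound together with the identity $\tfrac{d}{dx}C^{(\nu)}_\ell=2\nu\,C^{(\nu+1)}_{\ell-1}$ --- yield $\|\Delta_j\|_{B(L^2(\sphere^{n-1}))}\le C_n\min\bigl(|\delta|\,2^{j(1-\nu)},\,2^{-j\nu}\bigr)$. (The crude Markov bound $|t_\ell'|\le\ell^2$ would \emph{not} suffice here.)

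The Banach-space input is a transfer lemma in the spirit of \cite{delasalle1}: for an operator supported on a $k$-dimensional subspace of $L^2(\sphere^{n-1})$, its $L^2(X)$-operator norm is bounded by a power of $d_k(X)$ times its $L^2$-operator norm. Applied to $\Delta_j$, and using $d_{k_j}(X)\le C' k_j^{\beta}$, this gives an estimate of the form $\|\Delta_j\otimes\mathrm{Id}_X\|\le C\,2^{j(n-1)\beta}\min(|\delta|\,2^{j(1-\nu)},\,2^{-j\nu})$ --- the exponent $(n-1)\beta$ being the point where the hypothesis enters. It then remains to sum over $j$: splitting the series at $j_\ast\approx\log_2(1/|\delta|)$, where the two bounds for $\|\Delta_j\|_{B(L^2)}$ cross over, each tail is geometric (or bounded) with ratio governed by the sign of $\nu-(n-1)\beta$, and one obtains $\|(T_\delta-T_0)\otimes\mathrm{Id}_X\|\le C_X|\delta|^{\alpha_X}$ with $\alpha_X=\tfrac12\min\bigl(1,\nu-(n-1)\beta\bigr)\in(0,1)$. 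This exponent is positive precisely because $\beta<\tfrac{\nu}{n-1}=\tfrac12\bigl(1-\tfrac1{n-1}\bigr)$, and $\alpha_X$, $C_X$ depend only on $n$, $\beta$ and $C'$.

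I expect the main obstacle to be the transfer lemma. A careless factorisation of $\Delta_j$ through $\ell^2_{k_j}(X)$ costs $d_{k_j}(X)^2$ and would only yield the weaker range $\beta<\tfrac14\bigl(1-\tfrac1{n-1}\bigr)$; obtaining the estimate with effectively a single power of $d_{k_j}(X)$ --- and with the \emph{right} value $k_j\asymp 2^{j(n-1)}$ rather than the (much larger) dimension of the full isotypic blocks of $L^2(\mathrm{SO}(n))$ --- relies on the special structure of $\Delta_j$: on $L^2(\mathrm{SO}(n))$ its building blocks are $t_\ell(\delta)\,(P_\ell\otimes\mathrm{Id}_{\overline{\mathcal{H}}_\ell})$, where $P_\ell$ is the \emph{rank-one} projection onto the zonal harmonic in $\mathcal{H}_\ell$, so that the large factor $\overline{\mathcal{H}}_\ell$ is absorbed at no cost. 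Carrying this out carefully, hand in hand with sufficiently sharp Gegenbauer estimates near $\delta=0$, is where the real work lies.
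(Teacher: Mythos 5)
The paper does not actually prove this lemma: it is quoted verbatim from \cite[Lemma 5.4]{delaatdelasalle2} (and introduced as such), so the benchmark is the proof given there, whose general strategy is indeed the one you outline. Your reductions (to $|\delta|\le\tfrac12$ and to comparison with $T_0$), the identification of $T_\delta$ as the multiplier with eigenvalues $t_\ell(\delta)$ on the multiplicity-free decomposition $L^2(\sphere^{n-1})=\bigoplus_\ell\mathcal{H}_\ell$, the Gegenbauer bounds $|t_\ell(\delta)|\lesssim\ell^{-\nu}$ and $|t_\ell'(\delta)|\lesssim\ell^{1-\nu}$ on $[-\tfrac12,\tfrac12]$ (with $\nu=\tfrac{n-2}{2}$), and the dyadic summation are all correct, and your numerology $(n-1)\beta<\nu$ reproduces exactly the hypothesis $\beta<\tfrac12(1-\tfrac1{n-1})$ --- which confirms that any proof along these lines must pay only a single power of $d_{k_j}(X)$ with $k_j\asymp2^{j(n-1)}$.

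That single-power transfer estimate, however, is precisely where the entire content of the lemma lies, and your proposal asserts it rather than proves it. The statement you want to quote --- for $T$ supported on a $k$-dimensional subspace of $L^2$, $\|T\otimes\mathrm{Id}_X\|\le C\,d_k(X)\,\|T\|$ --- is not an off-the-shelf fact, and it does not follow from the natural arguments: a factorisation through $\ell^2_k$ does not tensorise with $\mathrm{Id}_X$; the comparison $\|\sum_i e_i\otimes x_i\|_{L^2(X)}\simeq(\sum_i\|x_i\|^2)^{1/2}$ up to $d_k(X)$ for orthonormal $(e_i)$ (the inequality that is genuinely available, of the kind of inequality (6) in \cite{delaatdelasalle2}) leads, when applied on both sides of the duality pairing or combined with the needed square-function bound on the Fourier coefficients of $F$ (which fails for spaces such as $c_0$ or $\ell^1$), to a loss of $d_k(X)^2$ or $d_k(X)d_k(X^*)$ rather than $d_k(X)$; and the sharp dyadic cutoffs $\Delta_j$ implicitly require a bound on sharp spectral projections tensored with $\mathrm{Id}_X$, which is not uniform for general $X$. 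Your structural remark about the rank-one zonal projections only explains why the multiplicity factor $\bar{\mathcal H}_\ell$ in $L^2(\mathrm{SO}(n))$ is harmless (it is), not how the single power of $d_{k_j}(X)$ is obtained; and, as you note yourself, a loss of $d_{k_j}(X)^2$ only yields $\beta<\tfrac14(1-\tfrac1{n-1})$, strictly weaker than the statement. So as it stands the proposal is an outline whose decisive step is essentially equivalent to the lemma itself; to complete it you would have to prove such a transfer inequality by exploiting the specific structure of the operators, as is done in \cite{delaatdelasalle2}, or simply cite Lemma 5.4 there, as the present paper does.
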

The way we apply the preceding lemma is through the following result, which is an extension of \cite[Lemma 5.6]{delaatdelasalle2} and follows from Proposition \ref{prop:nonequivariant}.
\begin{lem} \label{lem=coefficients_nonbi-invariant_Banach_rep_SOn} Let $n \geq 3$, and let $X$ be a Banach space satisfying \eqref{eq=ine_Tdelta_X} for some $\alpha_X \in (0,1)$ and $C_X>0$, and all $\delta,\delta' \in [-1,1]$. Let $\tilde{k}_0$ denote an element of $\mathrm{SO}(n)$ with $(\tilde{k}_0)_{11}=0$ (the upper left entry of $\tilde{k}_0$ equals $0$). Suppose that $V$ is a finite-dimensional unitary representation of $\mathrm{SO}(n-1)$. Then there exists a constant $C(X,V)$ (depending on $X$ and $V$) such that for all isometric representations $\pi \colon \mathrm{SO}(n) \to O(X)$, all $\mathrm{SO}(n-1)$-invariant vectors $\xi\in X$ and all $\mathrm{SO}(n-1)$-equivariant linear maps $q \colon X \to V$, the function $\varphi$ given by $\varphi(\tilde{k}) = q(\pi(\tilde{k})\xi)$ satisfies
\[
  \|\varphi(\tilde{k}) - \varphi(\tilde{k}_0)\|_V \leq C(X,V)(d(\tilde{k},\tilde{k}_0)+ |\tilde{k}_{11}|^{\alpha_X}) \|\xi\|_X \|q\|_{X \to V}
\]
for all $\tilde{k} \in \mathrm{SO}(n)$. Here $d$ denotes some fixed distance on $\mathrm{SO}(n)$ coming from an invariant Riemannian metric.
\end{lem}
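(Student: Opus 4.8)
The plan is to deduce Lemma~\ref{lem=coefficients_nonbi-invariant_Banach_rep_SOn} from Proposition~\ref{prop:nonequivariant} applied to $K = \mathrm{SO}(n)$, $U = \mathrm{SO}(n-1) \times \mathrm{SO}(n-1)$ acting on $K$ by left-right multiplication, and the finite-dimensional unitary representation $\rho$ of $U$ built from $V$. Concretely, given the $\mathrm{SO}(n-1)$-invariant vector $\xi$ and the $\mathrm{SO}(n-1)$-equivariant map $q \colon X \to V$, I would set $f(a) = q(a\xi)$ for $a \in B(X)$, after rescaling so that $\|\xi\|_X \|q\|_{X \to V} \le 1$ (the general estimate then follows by homogeneity). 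This $f$ has the form $\sum_i \langle a\xi_i, \eta_i\rangle v_i$ required by the proposition: write $q(y) = \sum_j \langle y, \eta_j\rangle e_j$ in an orthonormal basis $(e_j)$ of $V$, take $\xi_i = \xi$, and control $\sum_i \|\xi_i\|_X \|\eta_i\|_{X^*} \|v_i\|_V$ by a constant depending only on $\dim V$ and $\|q\|$. I must choose the $U$-representation $\rho$ correctly: since $\xi$ is $\mathrm{SO}(n-1)$-invariant (for the left factor, say) and $q$ is $\mathrm{SO}(n-1)$-equivariant, the natural equivariance is $f(\pi(k')a\pi(k''^{-1})) = \rho(k',k'')f(a)$ with $\rho(k',k'') = \sigma(k')$ acting on $V$ through the given $\mathrm{SO}(n-1)$-action and trivially in $k''$ — one should double-check which coordinate conventions make $\xi$ left-invariant versus right-invariant, but this is a bookkeeping matter. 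With $k_0 = \tilde k_0$, the dimension condition $\dim(V_{\tilde k}) = \dim(V_{\tilde k_0})$ in the proposition needs attention.

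Granting the hypotheses of Proposition~\ref{prop:nonequivariant}, its conclusion gives
\[
  \|\varphi(\tilde k) - \varphi(\tilde k_0)\|_V \le C_{V,\tilde k_0}\Bigl( d(\tilde k, \tilde k_0) + \Bigl\| \Bigl( \int_U (\lambda(u \cdot \tilde k) - \lambda(u \cdot \tilde k_0))\,du \Bigr) \otimes \mathrm{Id}_X \Bigr\|_{B(L^2(\mathrm{SO}(n);X))} \Bigr).
\]
The second step is to recognise the operator $\int_U \lambda(u \cdot \tilde k)\,du$ as exactly $T_{\tilde k_{11}}$ (up to composition with a fixed translation): averaging over $\mathrm{SO}(n-1) \times \mathrm{SO}(n-1)$ acting by left-right multiplication on $\tilde k$ produces $\int \lambda(u \tilde k u')\,du\,du'$, which by the discussion preceding Lemma~\ref{lem=estimates_Tdelta_X} depends only on $\tilde k_{11}$ and equals $T_{\tilde k_{11}}$. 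Since $(\tilde k_0)_{11} = 0$, the difference $\int_U(\lambda(u\cdot\tilde k) - \lambda(u \cdot \tilde k_0))\,du$ becomes $T_{\tilde k_{11}} - T_0$ (again modulo a harmless fixed left translation that commutes with everything and has norm one after tensoring), so Lemma~\ref{lem=estimates_Tdelta_X}, i.e. inequality \eqref{eq=ine_Tdelta_X}, bounds its norm on $L^2(\mathrm{SO}(n);X)$ by $C_X \max(|\tilde k_{11}|^{\alpha_X}, 0) = C_X |\tilde k_{11}|^{\alpha_X}$. Combining the two estimates and reinstating the factor $\|\xi\|_X \|q\|_{X \to V}$ yields the claimed bound with $C(X,V) = \max(1, C_{V,\tilde k_0})\,\max(1, C_X)$ times the combinatorial constant from the decomposition of $q$.

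The main obstacle is the dimension hypothesis $\dim(V_{\tilde k}) = \dim(V_{\tilde k_0})$ in Proposition~\ref{prop:nonequivariant}, which does not hold for all $\tilde k$: the stabiliser $U_{\tilde k}$ jumps in dimension precisely when $\tilde k_{11} = \pm 1$, so $V_{\tilde k}$ can be larger there. As the authors point out in the remark after Proposition~\ref{prop:nonequivariant}, in this particular case ($K = \mathrm{SO}(n)$, $U = \mathrm{SO}(n-1)^2$) one can remove the hypothesis: when $\dim(V_{\tilde k}) \ne \dim(V_{\tilde k_0})$ one has $\tilde k_{11}$ close to $\pm 1$, and then either the operator norm $\|\int_U(\lambda(u\cdot\tilde k) - \lambda(u\cdot\tilde k_0))\,du\|$ is already $\ge 1$ (making the asserted inequality trivially true after absorbing it into $C(X,V)$), or the spherical distance $d(\tilde k, \tilde k_0)$ is bounded below by a positive constant, since $\tilde k_0$ has $(\tilde k_0)_{11}=0$ while $\tilde k$ has $|\tilde k_{11}|$ near $1$; in either case the right-hand side is bounded below by a constant and the left-hand side is at most $2\|\xi\|_X\|q\|_{X\to V}$, so the estimate holds after enlarging $C(X,V)$. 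I would make this dichotomy precise as the one genuinely non-routine point; everything else — the explicit decomposition of $f$, the identification of the averaged regular representation with $T_{\tilde k_{11}}$, and the homogeneity rescaling — is straightforward bookkeeping, closely parallel to the proof of \cite[Lemma~5.6]{delaatdelasalle2}.
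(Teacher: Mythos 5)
Your proposal is correct and follows essentially the same route as the paper: apply Proposition~\ref{prop:nonequivariant} with $K=\mathrm{SO}(n)$, $U=\mathrm{SO}(n-1)\times\mathrm{SO}(n-1)$, $\rho(k',k'')v=k'\cdot v$ and $f(a)=q(a\xi)$, identify $\int_U\lambda(u\cdot\tilde k)\,du$ with $T_{\tilde k_{11}}$ (exactly, no translation correction is needed), and conclude via Lemma~\ref{lem=estimates_Tdelta_X}. The paper handles the delicate point a bit more simply: the dimension hypothesis can only fail when $|\tilde k_{11}|=1$, where the claimed inequality is trivial because $|\tilde k_{11}|^{\alpha_X}=1$, and for $|\tilde k_{11}|<1$ it verifies $\dim V_{\tilde k}=\dim V_{\tilde k_0}$ by noting that both stabilizers are conjugate in $U$ to the diagonal $\mathrm{SO}(n-2)$ (conjugacy of the stabilizers, not merely equality of their dimensions, is what yields equal $\dim V_{\tilde k}$), which is the precise form of the fact you gesture at; your dichotomy achieves the same conclusion.
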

\begin{proof} If $|\widetilde k_{1,1}|=1$ there is nothing to prove, so we can assume that $|\widetilde k_{1,1}|<1$. Apply Proposition \ref{prop:nonequivariant} with $K= \mathrm{SO}(n)$, $U = \mathrm{SO}(n-1) \times \mathrm{SO}(n-1)$, the representation $\rho$ of $U$ on $V$ given by $\rho(u,u') v=u\cdot v$ for $(u,u') \in U$, and the function $f(a) = q( a \xi)$. The assumption that $\mathrm{dim}(V_{\tilde k}) = \mathrm{dim}(V_{\tilde k_0})$ holds because if  $|\tilde k_{1,1}|<1$, the groups $U_{\tilde k}$ and $U_{\tilde k_0}$ are both conjugate to the subgroup $\mathrm{SO}(n-2)$ (as a diagonal subgroup of $\mathrm{SO}(n-2)\times \mathrm{SO}(n-2) \subset U$). The term 
\[\left\| \left(\int_{U} (\lambda(u \cdot k) - \lambda(u \cdot k_0))du\right) \otimes \mathrm{Id}_X \right\|_{B(L^2(K;X))}\] is equal to
\[\| (T_{\widetilde k_{11}} - T_{0})\otimes \mathrm{Id}_X\|_{B(L^2(\mathrm{SO}(n);X))},\]
which is less than $C_X |\widetilde k_{11}|^{\alpha_X}$ by Lemma \ref{lem=estimates_Tdelta_X}. 
\end{proof}
For $\delta \in [-1,1]$, let $\tilde{k}_\delta \in \mathrm{SO}(n)$ denote a rotation by angle $\theta=\arccos(\delta) \in [0,\pi]$ in the plane generated by the first two coordinate vectors of $\R^n$. Then $(\tilde{k}_\delta)_{11}=\delta$ and $d(\tilde{k}_\delta,\tilde{k}_0) = \mathcal{O}(|\delta|)$, and the corresponding special case of the inequality of Lemma \ref{lem=coefficients_nonbi-invariant_Banach_rep_SOn} is
\begin{equation} \label{eq=coeff_noninv_SOn}
  \|\varphi(\tilde{k}_\delta) - \varphi(\tilde{k}_0)\|_V \leq C(X,V)|\delta|^{\alpha_X} \|\xi\|_X \|q\|_{X \to V},
\end{equation}
perhaps with a different constant $C(X,V)$. In particular,
\begin{equation} \label{eq=coeff_norm_noninv_SOn}
  \left| \|\varphi(\tilde{k}_\delta)\|_V - \|\varphi(\tilde{k}_0)\|_V\right| \leq C(X,V)|\delta|^{\alpha_X} \|\xi\|_X \|q\|_{X \to V}.
\end{equation}
For an integer $N$, equip $\mathrm{SL}(N,\R)$ with the length function defined by $\ell(g) = \max(\log \|g\|,\log\|g^{-1}\|)$, where $\|\cdot\|$ denotes the operator norm for operators on the Euclidean space $\R^N$. We will use that 
\begin{equation} \label{eq=length_function}
  \ell(g) \leq N\log \|g^{-1}\| \textrm{ for all }g \in \mathrm{SL}(N,\R).
\end{equation}
Before we turn to strong property (T) for the group $\mathrm{SL}(N,\R)$, we first explain its polar decomposition. We briefly recalled the basics of this decomposition in Section \ref{subsec=kak}. It is well-known that the group $\mathrm{SO}(N)$ is a maximal compact subgroup of $\mathrm{SL}(N,\R)$. Therefore, $\mathrm{SL}(N,\R)$ admits a polar decomposition of the form $\mathrm{SL}(N,\R)=\mathrm{SO}(N) \cdot A \cdot \mathrm{SO}(N)$, and $A$ is given by
\[
  A=\{\mathrm{diag}(e^{\alpha_1},\ldots,e^{\alpha_N}) \mid \alpha_1 + \ldots + \alpha_N = 0\}.
\]
If we restrict to $\overline{A^{+}}=\{\mathrm{diag}(e^{\alpha_1},\ldots,e^{\alpha_N}) \mid \alpha_1 + \ldots + \alpha_N = 0,\;\alpha_1 \geq \ldots \geq \alpha_N\}$, we still have $\mathrm{SL}(N,\R)=\mathrm{SO}(N) \cdot \overline{A^{+}} \cdot \mathrm{SO}(n)$, and moreover, the element $a \in \overline{A^{+}}$ is uniquely determined.

Let $m \in \N$. In what follows, for $r,s,t \in \R$ satisfying $r+s+t=0$, we denote by $D(r,s,t)$ the diagonal matrix in $\mathrm{SL}(3m,\R)$ with $m$ eigenvalues equal to $e^r$, and $m$ ones equal to $e^s$, and the other $m$ eigenvalues equal to $e^{t}$, in such a way that the eigenvalues are ordered as follows: $D(r,s,t)=\mathrm{diag}(e^r,\ldots,e^r,e^s,\ldots,e^s,e^t,\ldots,e^t)$.

For $u > 0$, let $m_u$ denote the compactly supported probability measure on $\mathrm{SL}(3m,\R)$ defined by
\[
  m_u(f) = \int_{\mathrm{SO}(3m)}\int_{\mathrm{SO}(3m)} f(k D(u,0,-u) k') dk dk',
\]
where $dk$ and $dk'$ denote the normalized Haar measure on $\mathrm{SO}(3m)$. In the following proposition, we consider a certain sequence of such measures. In the following, we will take $m=n-2$, where $n \geq 3$.

The main task in the rest of this section is to prove the following proposition, which implies Theorem \ref{thm=strongt}.
\begin{prp} \label{prp=strongtsln}
Let $n \geq 3$, and let $(m_u)_{u \in \R^+}$ be the family of measures on $\mathrm{SL}(3n-6,\R)$ as defined above. Then for every Banach space satisfying \eqref{eq=ine_Tdelta_X} and every representation $\pi \colon \mathrm{SL}(3n-6,\R) \to B(X)$ satisfying $\sup_{g \in G} e^{-\gamma \ell(g)} \|\pi(g)\| < \infty$ for some $\gamma < \frac {\alpha_X}{3n-6}$, there exists an operator $P$ in $B(X)$ such that 
\begin{enumerate}[(i)]
\item \label{item=P_projection} the operator $P$ is a projection onto the subspace $X^{\pi(G)}$ of $\pi(G)$-invariant vectors in $X$,
\item \label{item=P_limit} we have $\lim_{u \to \infty} e^{(\frac{\alpha_X}{2} - (3n-6)\gamma) u} \|\pi(m_u) - P\|_{B(X)} = 0$.
\end{enumerate}
\end{prp}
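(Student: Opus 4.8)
The plan is to prove Proposition~\ref{prp=strongtsln} in two stages: first establish that the sequence $\pi(m_u)$ is norm-Cauchy as $u \to \infty$ (with the quantitative rate in \eqref{item=P_limit}), and then identify the limit $P$ with a projection onto $X^{\pi(G)}$. For the first stage, I would use the polar decomposition $\mathrm{SL}(3n-6,\R) = K \overline{A^+} K$ with $K = \mathrm{SO}(3n-6)$ and the fact (essentially from \cite{delaatdelasalle2}) that $\pi(m_u) - \pi(m_{u'})$ can be written, after averaging over $K$ on both sides, in terms of operators built from the $T_\delta$'s acting on the $K$-decomposition of $X$. The key input is Lemma~\ref{lem=estimates_Tdelta_X}: the bound $\|(T_\delta - T_{\delta'})\otimes\mathrm{Id}_X\| \leq C_X \max(|\delta|^{\alpha_X},|\delta'|^{\alpha_X})$, combined with the growth bound $\|\pi(g)\| \leq L e^{\gamma\ell(g)}$ and the elementary estimate \eqref{eq=length_function} which on $\overline{A^+}$ says $\ell(D(u,0,-u)) = u$ and more generally controls how the operator norm of $\pi$ grows along the relevant group elements. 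Writing out $\pi(m_u)\xi$ as a $K$-bi-averaged integral over $D(u,0,-u)$ and inserting a telescoping argument as $u$ ranges over a geometric sequence, the factor $e^{(3n-6)\gamma u}$ from the representation growth is beaten by the decay $e^{-\frac{\alpha_X}{2}u}$ coming from the $|\delta|^{\alpha_X}$-estimate (the exponent $\alpha_X$ gets halved because the relevant matrix entry $\delta$ that enters $T_\delta$ is of size $e^{-u/2}$-ish after the rescaling built into $D(u,0,-u)$), precisely under the hypothesis $\gamma < \frac{\alpha_X}{3n-6}$. This yields a norm-limit $P$ together with the rate in \eqref{item=P_limit}.

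For the second stage — identifying $P$ as the invariant projection — I would first check $P^2 = P$ by showing $\lim_{u,u'\to\infty}\pi(m_u * m_{u'}) = P$; this is where the improvement over \cite{delaatdelasalle2} is essential, since $m_u * m_{u'}$ is a $K$-bi-invariant measure supported near (a neighbourhood of) $D(u+u',0,-u-u')$-type elements, and the Cauchy estimate from stage one, now proved under weak conditions on the sequence of group elements (not just the specific geometric sequence), applies to $\pi(m_u * m_{u'})$ as well, forcing the limit to agree with $\lim_u \pi(m_u) = P$. Next, $P$ is $\pi(G)$-invariant on both sides: for fixed $g$, $\pi(g)\pi(m_u)$ and $\pi(m_u)\pi(g)$ differ from $\pi(m_u)$ by terms that vanish in norm (again using that $g m_u$ and $m_u g$ are supported within bounded distance of $\mathrm{supp}(m_u)$ and invoking the Cauchy estimate / near-bi-invariance), so $\pi(g)P = P = P\pi(g)$; hence $\mathrm{Ran}(P) \subseteq X^{\pi(G)}$ and $P$ restricts to the identity on $X^{\pi(G)}$ (since $\pi(m_u)\xi = \xi$ for invariant $\xi$, as $m_u$ is a probability measure). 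Combining $P^2 = P$, $P|_{X^{\pi(G)}} = \mathrm{Id}$, and $\mathrm{Ran}(P) \subseteq X^{\pi(G)}$ gives \eqref{item=P_projection}.

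The auxiliary tool I expect to need for the $\pi(g)P = P$ step and for handling non-$K$-bi-invariant perturbations is Lemma~\ref{lem=coefficients_nonbi-invariant_Banach_rep_SOn} and its corollaries \eqref{eq=coeff_noninv_SOn}--\eqref{eq=coeff_norm_noninv_SOn}: these let me compare matrix coefficients of $\pi$ along rotations $\tilde k_\delta$ near $\tilde k_0$ with a Hölder modulus in $\delta$, which is exactly what is needed to pass from the $K$-bi-averaged operator (which stage one controls) to the genuine operator $\pi(m_u)$ and to $\pi(g)\pi(m_u)$. Concretely, one fixes a vector and a suitable finite-dimensional $\mathrm{SO}(3n-7)$-equivariant quotient $q$, applies \eqref{eq=coeff_noninv_SOn} coordinate by coordinate along the $K$-integration, and sums up; the Hölder exponent $\alpha_X$ is uniform and the constants depend only on $X$ and $V$, so the perturbation is $\mathcal{O}(e^{-c u})$ and disappears in the limit.

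\emph{Main obstacle.} The hard part will be the second stage: showing $\mathrm{Ran}(P) \subseteq X^{\pi(G)}$, i.e.\ that the limit really is a projection \emph{onto the invariant vectors} and not onto some larger subspace. In the rank-two situation one may take essentially any sequence tending to infinity and the identification is comparatively soft, but here the measures $m_u$ are supported on the very degenerate $\overline{A^+}$-elements $D(u,0,-u)$, so a priori $\pi(m_u)$ could converge to a projection onto vectors invariant only under the (parabolic-type) subgroup associated with these directions. The resolution is precisely the strengthened Cauchy estimate from stage one: because it holds for a wide family of sequences of group elements, one can perturb $D(u,0,-u)$ within $\overline{A^+}$ — moving the middle block of eigenvalues — while keeping the norm-limit equal to $P$, and by combining enough such directions (using that $3n-6$ is large, i.e.\ $N > \max(8,\frac{3}{1-2\beta}-3)$) one generates a genuinely higher-rank family of flats, forcing $P$-invariance under all of $G$ via a bounded-generation / relative-property-(T)-type argument. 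Making this step quantitative and verifying that the exponent budget $\gamma < \frac{\alpha_X}{3n-6}$ survives all the perturbations is the crux of the proof.
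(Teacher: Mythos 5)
Your first stage is essentially the paper's Step 1 and is fine in outline: the Cauchy property of $\pi(m_u)$ with the rate in \eqref{item=P_limit} is obtained exactly from the H\"older estimate \eqref{eq=ine_Tdelta_X} applied to $K$-bi-invariant coefficients, with the exponent halving and the budget $\gamma<\frac{\alpha_X}{3n-6}$ as you describe. The genuine gap is in the identification of $P$. For $P^2=P$, your justification rests on the claim that $m_u\ast m_{u'}$ is supported near $D(u+u',0,-u-u')$-type elements; this is false. The support is $KD(u,0,-u)KD(u',0,-u')K$, and as $k$ runs over $K$ the double coset of $D(u,0,-u)\,k\,D(u',0,-u')$ sweeps out a large part of the Weyl chamber --- for $u=u'$ it even contains $K$ itself (choose $k$ conjugating $D(u',0,-u')$ to the diagonal matrix with the blocks of $e^{u'}$ and $e^{-u'}$ interchanged) --- so the Cauchy estimate cannot be applied termwise to the convolution. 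What the paper actually needs, and proves, is convergence $\pi(\widetilde m_D)\to P$ along the structured class $\mathcal D$ (smallest $n-2$ eigenvalues equal and negative, $(n-2)$-th largest eigenvalue $>1$), via Lemma \ref{lem=technical_lemma} and \eqref{eq=convergence_in_D}, \emph{together with} a change of the auxiliary family from $D(u,0,-u)$ to $D_i=D(2i,-i,-i)$: because the eigenvalue $e^{-i}$ then has multiplicity $2n-4$, a dimension count forces $D_i k D_j\in K\mathcal D K$ for \emph{every} $k\in K$, which is what yields $\pi(\widetilde m_{D_i})P=P$ and hence $P^2=P$. This multiplicity/dimension-count mechanism is absent from your outline, and with the original measures $m_u$ the needed inclusion of the support in $K\mathcal D K$ simply does not hold.

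The second part of your identification step is also unsupported. The assertion that $\pi(g)\pi(m_u)$ and $\pi(m_u)\pi(g)$ differ from $\pi(m_u)$ by terms vanishing in norm because the supports lie within bounded distance is a Howe--Moore-type mixing statement that the available estimates do not provide: Lemma \ref{lem=estimates_Tdelta_X} and Lemma \ref{lem=coefficients_nonbi-invariant_Banach_rep_SOn} control only $K$-bi-invariant coefficients, or their finite-dimensional $K$-isotypic projections, along specific diagonal directions, not the effect of an arbitrary translation on a double-coset average; indeed, the unavailability of such soft arguments beyond the reflexive case is the whole point of the construction. The paper's Step 3 instead shows (Lemma \ref{lem=vanishing_noninvariant_coeff}) that for $K$-invariant $\xi$, every nontrivial $K$-isotypic component of $\pi(D)\xi$ tends to $0$ as $\ell(D)\to\infty$ with $D\in\mathcal D$ --- using rotations in the subgroups $\mathrm{SO}_{i,j}$, whose common invariant vectors in a nontrivial irreducible $V$ are zero --- then applies the Peter--Weyl theorem for Banach representations of $K$ to deduce that $\pi(D_1)\xi$ is $K$-invariant for $\xi\in\mathrm{Ran}(P)$, and finally that $\xi=\pi(D_1)\xi$, so $\xi$ is fixed by the closed subgroup generated by $K$ and $D_1$, which is $G$. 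Your closing paragraph correctly flags this as the crux, but the proposed remedy (perturbations in $\overline{A^+}$ plus a bounded-generation/relative-(T)-type argument) is not the mechanism that works here and is not carried out; as written, both $P^2=P$ and $\mathrm{Ran}(P)\subseteq X^{\pi(G)}$ remain unproven in your proposal.
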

Note that the measure $m_u$ is just the uniform probability measure on the subset $\mathrm{SO}(3n-6) \cdot D(u,0,-u) \cdot \mathrm{SO}(3n-6)$ of $\mathrm{SL}(3n-6,\R)$.
\begin{proof}[Proof of Theorem \ref{thm=strongt} using Proposition \ref{prp=strongtsln}]
Let $\beta < \frac{1}{2}$, and take $n \geq 3$ to be the smallest natural number such that $\beta < \frac{1}{2}(1-\frac{1}{n-1})$. Put $N=3n-6$. It follows directly from Proposition \ref{prp=strongtsln} and Lemma \ref{lem=estimates_Tdelta_X} that $\mathrm{SL}(N,\R)$ has strong property (T) with respect to the Banach spaces satisfying \eqref{eq=d_kX_grows_slowly}.

It follows from a result of Dynkin in \cite{dynkin} (see \cite{dynkinselectedworks} for a translation) that every connected simple Lie group with real rank $N \geq 9$ contains a closed analytic subgroup locally isomorphic to $\mathrm{SL}(N,\R)$ (see \cite[Lemma 4.6]{delaatdelasalle2} for more details). By a similar argument as in the proof of \cite[Corollaire 4.1]{lafforguestrongt} (see also the proof of \cite[Theorem A]{delaatdelasalle1}), strong property (T) with respect to the Banach spaces satisfying \eqref{eq=d_kX_grows_slowly} now follows for connected simple Lie groups with real rank at least $N$.
\end{proof}

\subsection*{Proof of Proposition \ref{prp=strongtsln}} The rest of this section is devoted to the proof of Proposition \ref{prp=strongtsln}. The proof follows the same strategy as the proof of strong property (T) for $\mathrm{SL}(3,\R)$ relative to the class of Hilbert spaces, which is due to Lafforgue \cite[Section 2]{lafforguestrongt} (see also \cite{delasalle1} for strong property (T) for this group relative to certain Banach spaces). However, identifying the operator $P$ with the projection onto the $\pi(G)$-invariant vectors in $X$ involves new ideas and techniques.

From now on, we fix $n$, $X$, $\pi$ and $\gamma$ as in the statement of Proposition \ref{prp=strongtsln}. We put $G=\mathrm{SL}(3n-6,\R)$ and $K=\mathrm{SO}(3n-6)$. By renorming $X$ with the equivalent norm given by assigning the number $\sup_{k \in K} \|\pi(k)x\|$ to $x \in X$, we may assume that the restriction of $\pi$ to $\mathrm{SO}(3n-6)$ is isometric. We still denote this norm by $\|.\|$. Let $L = \sup_{g \in \mathrm{SL}(3n-6,\R)}e^{-\gamma \ell(g)}\|\pi(g)\|$, and for $g \in G$, let $\widetilde{m}_g$ denote the uniform probabilty measure on $KgK$, i.e.,
\[
  \widetilde m_g(f) = \int_K \int_K f(kgk') dk dk',
\]
so that the measures $m_u$ of the proposition are given by $m_u=\widetilde{m}_{D(u,0,-u)}$.

The proof now proceeds in three steps. The first step is a straightforward adaptation of an argument from \cite{delaatdelasalle2}.\\

{\underline{Step 1}}. The net of operators $(\pi(m_u))$ has a limit $P$ in the norm topology of $B(X)$ as $u \to \infty$ and conclusion \eqref{item=P_limit} of Proposition \ref{prp=strongtsln} holds.\\

As mentioned before, the difficulty is to identify $P$ as a projection onto the invariant vectors.\\

{\underline{Step 2}}. The operator $P$ is a projection.

{\underline{Step 3}}. The image of $P$ is the space $X^{\pi(G)}$ consisting of $\pi(G)$-invariant vectors.\\

For the proof of \underline{Step 1}, we first prove two lemmas, which generalize certain results from \cite[Section 4]{delaatdelasalle2} to the setting of representations that are not necessarily isometric.

\begin{lem} \label{lem=coeff_GLn}
For $K$-invariant unit vectors $\xi \in X$ and $\eta \in X^*$, set $\varphi(g) = \langle \pi(g) \xi,\eta \rangle$. Also, let $D=\mathrm{diag}(e^{v_1},\ldots,e^{v_{3n-6}}) \in \mathrm{SL}(3n-6,\R)$ be a diagonal matrix with $\min(v_1,\dots,v_{3n-6}) = v_{2n-3} = \ldots = v_{3n-6}=t$ for some $t<0$. Suppose $i<j \leq 2n-4$, and let $D'=\mathrm{diag}(e^{v_1'},\ldots,e^{v_{3n-6}'}) \in \mathrm{SL}(3n-6,\R)$ be another diagonal matrix with entries equal to those of $D$ except the $i$-th and the $j$-th entry, and also satisfying $v'_i,v'_j \geq t$. Then
\[
  |\varphi(D) - \varphi(D')|\leq 2 \widetilde{C}_X L^2 e^{-\alpha_X(\mu-t) - (3n-6)\gamma t},
\]
where $\mu=\min\{v_i,v_j,v'_i,v'_j\}$.
\end{lem}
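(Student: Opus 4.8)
The goal is to estimate the difference of the two $K$-bi-invariant coefficients $\varphi(D)$ and $\varphi(D')$, where $D$ and $D'$ differ only in the $i$-th and $j$-th diagonal entries (both of which are at least $t$, the common minimal entry). The natural idea is to interpolate between $D$ and $D'$ through a path inside the block $\mathrm{SL}(2,\R)$ sitting in coordinates $i$ and $j$, and to exploit that the remaining $3n-6-2$ coordinates include at least $n-2$ coordinates equal to the minimal value $t$, which will provide a copy of $\mathrm{SO}(n-1)$ with enough invariant structure to apply \eqref{eq=coeff_noninv_SOn} (or rather \eqref{eq=coeff_norm_noninv_SOn}). More precisely, write $D = g_1 \widetilde{D} g_1$ and $D' = g_2 \widetilde{D} g_2$ in a suitable way, or better: factor $D = D_0 R$ and $D' = D_0 R'$ where $D_0$ carries the entries that are common and $R, R'$ are diagonal in the $(i,j)$-plane, and then use the $K$-bi-invariance of $\varphi$ to replace $R$ by a conjugate that is a genuine rotation.

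\textbf{Key steps.} First, I would reduce to the case where $\varphi(D)$ and $\varphi(D')$ are compared through a single intermediate matrix, namely the diagonal matrix $D''$ obtained from $D$ by replacing the $i$-th and $j$-th entries with $e^\mu$ (or with a rescaling thereof to keep determinant one using the freedom in the other entries). The point is that $D = D'' \cdot \mathrm{diag}(\ldots, e^{v_i - \mu}, \ldots, e^{v_j-\mu},\ldots)$ with both exponents $\geq 0$, and similarly for $D'$; so it suffices to bound $|\varphi(D'') - \varphi(D\,\text{-or-}D')|$ when one goes from $D''$ to a matrix obtained by \emph{increasing} two entries that sit strictly above the minimum. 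Second, for such a move I would use the standard trick from \cite{lafforguestrongt, delaatdelasalle2}: factor the ``increasing'' diagonal matrix $s = \mathrm{diag}(e^{a_1},\dots)$ (with $a_\ell \geq 0$, supported on the two coordinates $i,j$ that are $\geq t$ above the minimal block) through $K$ as $\pi(s) = \pi(k_1) \pi(D_{\mathrm{rot}}) \pi(k_2)$-type decompositions that land us in the situation of Lemma~\ref{lem=coefficients_nonbi-invariant_Banach_rep_SOn}: writing things so that $\xi' = \pi(k_2)\xi$ is $\mathrm{SO}(n-1)$-invariant for the copy of $\mathrm{SO}(n-1)$ in the $n-1$ coordinates equal to $t$ (plus one of $i,j$), and $q = \langle \pi(\cdot)\,\cdot\,,\eta\rangle$ restricted appropriately is an $\mathrm{SO}(n-1)$-equivariant functional into a one-dimensional $V$. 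Third, the growth hypothesis $\|\pi(g)\| \leq L e^{\gamma \ell(g)}$ together with \eqref{eq=length_function} gives $\|\pi(D_{\mathrm{block}})\| \leq L e^{-(3n-6)\gamma t}$ for the block of minimal entries (since all of $D$, normalized, has $\ell$ controlled by $-(3n-6) t$), which explains the factor $L^2 e^{-(3n-6)\gamma t}$; and the factor $e^{-\alpha_X(\mu - t)}$ comes from \eqref{eq=coeff_norm_noninv_SOn} applied with $\delta = e^{-(\mu-t)}$, since the relevant entry $\tilde{k}_{11}$ of the rotation is $\cos\theta$ with $\cos\theta \sim e^{-(\mu-t)}$ (the ratio of the minimal scale to the scale at coordinates $i,j$). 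Putting the two comparisons $D \leftrightarrow D''$ and $D'' \leftrightarrow D'$ together and using the triangle inequality gives the factor $2$.

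\textbf{Main obstacle.} The delicate point is the bookkeeping in the factorization: one must verify that the block of $n-2$ (or $n-1$) coordinates at the minimal level $t$, together with the $i$-th (resp. $j$-th) coordinate, really can be arranged so that the vector $\pi(g)\xi$ appearing is invariant under the correct copy of $\mathrm{SO}(n-1)$ inside $K = \mathrm{SO}(3n-6)$, and that the map $q$ one feeds into Lemma~\ref{lem=coefficients_nonbi-invariant_Banach_rep_SOn} is genuinely $\mathrm{SO}(n-1)$-equivariant with controlled norm $\|q\|_{X\to V} \lesssim \|\eta\|$. Getting the scales right — i.e. checking that the relevant upper-left entry of the rotation one produces is of order $e^{-(\mu - t)}$ rather than $e^{-\mu}$ or $e^{-(v_i - v_j)}$ — is where the hypothesis ``$\min = v_{2n-3} = \dots = v_{3n-6} = t$'' and ``$i < j \leq 2n-4$'' is used, and it is the one place where a sloppy computation would give the wrong exponent. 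Everything else is routine: the triangle inequality, the norm bound from the length function, and quoting \eqref{eq=coeff_norm_noninv_SOn}.
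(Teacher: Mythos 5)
Your overall strategy is the one the paper uses: sandwich a rotation in the $(i,j)$-plane between diagonal matrices, use the $n-2$ entries at the minimal level $t$ (together with one of the coordinates $i,j$) to produce a copy of $\mathrm{SO}(n-1)\subset\mathrm{SO}(n)$ with invariant vectors, apply the H\"older-type estimate for such coefficients, control norms via the length function, and finish with the triangle inequality; the exponents you predict ($|\delta|\sim e^{t-\mu}$ and $L^2e^{-(3n-6)\gamma t}$) are the correct ones. But two of your concrete steps would fail as written. First, your intermediate $D''$ (entries $e^\mu$ at positions $i$ and $j$) does not lie in $\mathrm{SL}(3n-6,\R)$, and the proposed fix of rescaling the other entries destroys exactly what the argument needs: the untouched block of minimal entries at level $t$ and the agreement with $D$ and $D'$ outside $\{i,j\}$. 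Moreover, \eqref{eq=coeff_noninv_SOn} is anchored at a rotation $\tilde k_0$ with vanishing $(1,1)$-entry, so the common comparison point is forced to be $\varphi(\mathcal{A}k_0\mathcal{A})$, which corresponds to the diagonal matrix with both $(i,j)$-entries equal to $e^{(v_i+v_j)/2}$; your $D''$ is thus both unavailable and unnecessary. Also, you need the value inequality \eqref{eq=coeff_noninv_SOn} (with $V$ trivial), not the norm inequality \eqref{eq=coeff_norm_noninv_SOn}, which would only control $\bigl||\varphi(D)|-|\varphi(D')|\bigr|$.

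Second, the heart of the proof---which you flag as ``bookkeeping'' but do not carry out, and whose description via ``$\xi'=\pi(k_2)\xi$'' is vacuous because $\xi$ is $K$-invariant---is the explicit choice of the diagonal sandwich. The paper takes $\mathcal{A}=\mathrm{diag}(e^{a_1},\dots,e^{a_{3n-6}})$ with $a_i=\frac{v_i+v_j-t}{2}$, $a_j=\frac{t}{2}$, and $a_l=\frac{v_l}{2}$ otherwise; then $\mathcal{A}$ acts as the scalar $e^{t/2}$ on coordinate $j$ and on the last $n-2$ coordinates, hence commutes with the relevant $\mathrm{SO}(n-1)$, so the $\mathrm{SO}(n-1)$-invariant vectors to which \eqref{eq=coeff_noninv_SOn} is applied are $\pi(\mathcal{A})\xi$ and $\pi(\mathcal{A})^*\eta$, each of norm at most $Le^{-(3n-6)\gamma t/2}$ by \eqref{eq=length_function} (this, not a bound on ``the minimal block'', is where $L^2e^{-(3n-6)\gamma t}$ comes from). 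One then checks that on $\mathrm{span}\{e_i,e_j\}$ the product $\mathcal{A}k_\delta\mathcal{A}$ has upper-left entry $e^{v_i+v_j-t}\delta$, that suitable $\delta,\delta'$ give $\mathcal{A}k_\delta\mathcal{A}\in KDK$ and $\mathcal{A}k_{\delta'}\mathcal{A}\in KD'K$ (the entries outside $\{i,j\}$ already match, and the determinant of the $2\times2$ block is $e^{v_i+v_j}=e^{v_i'+v_j'}$), and that $e^{v_i+v_j-t}|\delta|\leq e^{\max(v_i,v_j)}$ forces $|\delta|,|\delta'|\leq e^{t-\mu}$; comparing both to $\varphi(\mathcal{A}k_0\mathcal{A})$ gives the factor $2$. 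Until this construction is actually carried out, your argument has a gap precisely at the step the lemma exists to settle.
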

\begin{proof} Let $\mathcal{A} = \mathrm{diag}(e^{a_1},\dots,e^{a_{3n-6}})$ be the matrix in $\mathrm{SL}(3n-6)$ with $a_i=\frac{v_i+v_j-t}{2}$, $a_j=\frac{t}{2}$ and $a_l=\frac{v_l}{2}$ if $l \notin \{i,j\}$. Using \eqref{eq=length_function} and $\min\{a_l \mid l=1,\ldots,3n-6\}=\frac{t}{2}$, we see that $\ell(\mathcal{A}) \leq -(3n-6)\frac{t}{2}$. 

Let $K' \subset K$ be the subgroup of $K$ that acts as the identity on the orthogonal complement of the space spanned by the $i$-th, the $j$-th and the last $n-2$ basis vectors of $\R^{3n-6}$. The group $K'$ is isomorphic to $\mathrm{SO}(n)$ and its subgroup $U'$ that fixes the $i$-th coordinate vector of $\R^{3n-6}$ is a copy of $\mathrm{SO}(n-1)$ contained in the centralizer of $\mathcal{A}$. In particular, the vectors $\pi(\mathcal{A}) \xi$ and $\pi(\mathcal{A})^* \eta$ are $U'$-invariant vectors of norm less than $\|\pi(\mathcal{A})\| \leq L e^{- (3n-6)\gamma\frac{t}{2}}$.

Let $k_\delta \in K$ be a rotation of angle $\arccos(\delta) \in [0,\pi]$ in the plane generated by the $i$-th and $j$-th coordinate vectors of $\R^2$. Notice that $k_\delta$ belongs to $K'$ and corresponds to the element $\tilde{k}_\delta \in \mathrm{SO}(n)$ under a correct identification of the pairs $(K',U')$ and $(\mathrm{SO}(n),\mathrm{SO}(n-1))$. Then \eqref{eq=coeff_noninv_SOn} implies that
\begin{equation} \label{eq=aka}
  |\varphi(\mathcal{A} k_\delta \mathcal{A}) - \varphi(\mathcal{A} k_0 \mathcal{A})| \leq \widetilde{C}_X|\delta|^{\alpha_X} L^2 e^{-(3n-6)\gamma t}
\end{equation}
for some constant $\widetilde{C}_X$ depending on $X$. Indeed, in the case that $V$ in \eqref{eq=coeff_noninv_SOn} is the trivial representation, which is exactly the case we are dealing with here, we denote the constant $C(X,V)$ by $\widetilde{C}_X$.

The matrix $\mathcal{A} k_\delta \mathcal{A}$ satisfies $\mathcal{A} k_\delta \mathcal{A} e_l = e^{2 a_l} e_l$ if $l \notin \{i,j\}$, and on the plane $\mathrm{span}\{e_i,e_j\}$, it acts as $B_{\delta}=\begin{pmatrix} e^{v_i+v_j-t} \delta & - e^{\frac{v_i+v_j}{2}} \sqrt{1-\delta^2} \\ e^{\frac{v_i+v_j}{2}} \sqrt{1-\delta^2} & e^{t} \delta\end{pmatrix}$.

By computation, it follows that there exists a $\delta \in [-1,1]$ such that the matrix $B_{\delta}$ for this value of $\delta$ belongs to $\mathrm{SO}(2) \mathrm{diag}(e^{v_i},e^{v_j}) \mathrm{SO}(2)$. This implies that $\mathcal{A} k_\delta \mathcal{A} \in K D K$. By a similar argument, there exists a $\delta' \in [-1,1]$ such that $\mathcal{A} k_{\delta'} \mathcal{A} \in KD'K$, so that by the $K$-bi-invariance of $\varphi$ and using the triangle inequality, we obtain
\[
  |\varphi(D) - \varphi(D')| \leq \widetilde{C}_X (|\delta|^{\alpha_X} + |\delta'|^{\alpha_X}) L^2 e^{-(3n-6)\gamma t}.
\]
Since the upper left entry of $\begin{pmatrix} e^{v_i+v_j-t} \delta & - e^{\frac{v_i+v_j}{2}} \sqrt{1-\delta^2}\\ e^{\frac{v_i+v_j}{2}} \sqrt{1-\delta^2} & e^{t} \delta\end{pmatrix}$ is smaller than or equal to the norm of this matrix, we get the inequality $e^{v_i+v_j-t}|\delta|\leq e^{\max(v_i,v_j)}$, and in particular $|\delta|\leq e^{t-\mu}$. Similarly, $|\delta'|\leq e^{t-\mu}$. This proves the lemma.
\end{proof}
\begin{lem}\label{lem=technical1}
For $K$-invariant unit vectors $\xi \in X$ and $\eta \in X^*$, set $\varphi(g) = \langle \pi(g) \xi,\eta \rangle$ as above. Then for all $u>0$ and $\delta>0$ satisfying $\delta < u$, 
\[
  |\varphi(D(u,0,-u)) - \varphi(D(u+\delta,-\delta,-u))| \leq (2n-4) \widetilde{C}_X L^2 e^{-\alpha_X(u-\delta) + (3n-6)\gamma u}.
\]
\end{lem}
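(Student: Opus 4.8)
The plan is to pass from $D(u,0,-u)$ to $D(u+\delta,-\delta,-u)$ through a chain of $n-2$ diagonal matrices, altering only two diagonal entries at a time, and to bound each of the $n-2$ resulting increments of $\varphi$ by Lemma~\ref{lem=coeff_GLn}.

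First I would recall that $D(u,0,-u)$ has its first $n-2$ diagonal entries equal to $e^{u}$, its middle $n-2$ entries (in positions $n-1,\dots,2n-4$) equal to $1$, and its last $n-2$ entries (positions $2n-3,\dots,3n-6$) equal to $e^{-u}$, whereas $D(u+\delta,-\delta,-u)$ has first block $e^{u+\delta}$, middle block $e^{-\delta}$, and the same last block $e^{-u}$. I would then set $D_0=D(u,0,-u)$ and, for $1\le k\le n-2$, let $D_k$ be obtained from $D_{k-1}$ by simultaneously replacing the $k$-th diagonal entry $e^{u}$ by $e^{u+\delta}$ and the $(n-2+k)$-th diagonal entry $1$ by $e^{-\delta}$. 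Each such move preserves the sum of the exponents, so every $D_k$ lies in $\mathrm{SL}(3n-6,\R)$, and $D_{n-2}=D(u+\delta,-\delta,-u)$.

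The main step is to check that for each $k$ the pair $(D_{k-1},D_k)$ satisfies the hypotheses of Lemma~\ref{lem=coeff_GLn} with $t=-u$, $i=k$ and $j=n-2+k$. Since $\delta<u$, every exponent occurring in any $D_{k-1}$ belongs to $\{u+\delta,u,0,-\delta,-u\}$, and the minimal value $-u$ is attained precisely on the last $n-2$ coordinates; hence the condition $\min=v_{2n-3}=\dots=v_{3n-6}=t$ holds with $t=-u<0$. The indices obey $i=k<n-2+k=j\le 2n-4$ (using $n\ge 3$), the two altered exponents $v'_i=u+\delta$ and $v'_j=-\delta$ are both $\ge t=-u$ (again because $\delta<u$), and $\mu:=\min\{v_i,v_j,v'_i,v'_j\}=\min\{u,0,u+\delta,-\delta\}=-\delta$, so that $\mu-t=u-\delta$ and $-(3n-6)\gamma t=(3n-6)\gamma u$. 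Lemma~\ref{lem=coeff_GLn} then yields $|\varphi(D_{k-1})-\varphi(D_k)|\le 2\widetilde{C}_X L^2 e^{-\alpha_X(u-\delta)+(3n-6)\gamma u}$ for every $k$, and summing over $k=1,\dots,n-2$ via the triangle inequality gives the asserted bound $(2n-4)\widetilde{C}_X L^2 e^{-\alpha_X(u-\delta)+(3n-6)\gamma u}$.

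The one point that needs a little care — and what I would single out as the (mild) obstacle — is the verification that the hypotheses of Lemma~\ref{lem=coeff_GLn} persist along the entire chain: that the minimum of the diagonal exponents stays equal to $-u$ and remains carried by the last $n-2$ coordinates at every intermediate step. This is precisely where the assumption $\delta<u$ enters; once this is noted, the remainder is routine bookkeeping.
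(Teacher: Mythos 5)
Your proposal is correct and is exactly the paper's argument: the paper's proof simply says the lemma follows by applying Lemma~\ref{lem=coeff_GLn} $n-2$ times, and your chain $D_0,\dots,D_{n-2}$ (changing one first-block and one middle-block entry per step, with $t=-u$, $\mu=-\delta$) is the intended instantiation, with the hypotheses verified correctly at each intermediate step. Nothing is missing.
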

\begin{proof}
  This lemma follows directly by applying Lemma \ref{lem=coeff_GLn} $n-2$ times.
\end{proof}
Now we can prove \underline{Step 1}.
\begin{proof}[Proof of \underline{Step 1}.] As a particular case of Lemma \ref{lem=technical1}, for $|\delta| \leq \frac{u}{2}$, we have
\[
  |\varphi(D(u,0,-u)) - \varphi(D(u+\delta,-\delta,-u))| \leq (2n-4) \widetilde{C}_X L^2 e^{-(\frac{\alpha_X}{2}-(3n-6)\gamma)u}.
\]
By our assumption on $\gamma$, we know that $\frac{\alpha_X}{2} - (3n-6)\gamma>0$. By applying Lemma \ref{lem=technical1} to the representation $g \mapsto \pi((g^t)^{-1})$ we obtain
\[
  |\varphi(D(v,0,-v)) - \varphi(D(v,\delta,-v-\delta))| \leq (2n-4) \widetilde{C}_X L^2 e^{-(\frac{\alpha_X}{2}-(3n-6)\gamma)v}
\]
if $|\delta|\leq\frac{v}{2}$. Setting $v=u+\delta$, these two inequalities imply that for all $\delta$ satisfying $0<\delta\leq\frac{u}{2}$,
\[
  |\varphi(D(u,0,-u)) - \varphi(D(u+\delta,0,-u-\delta))| \leq (4n-8) \widetilde{C}_X L^2 e^{-(\frac{\alpha_X}{2}-(3n-6)\gamma)u}.
\]
Suppose $u,v > 0$. Taking the supremum over all $K$-invariant unit vectors $\xi$ and $\eta$, we obtain
\[
  \| \pi(m_u) - \pi(m_v)\|_{B(X)} \leq (4n-8) \widetilde{C}_X L^2  e^{-(\frac{\alpha_X}{2} - (3n-6)\gamma)u}
\]
if $0<u<v\leq\frac{3}{2}u$. This implies that the $(\pi(m_u))_{u \in \R^+}$ satisfies the Cauchy criterion. Hence, it has a limit $P$ satisfying (\ref{item=P_limit}).
\end{proof}
We now turn to the remaining steps of the proof. The key point for \underline{Step 2} is the following lemma.
\begin{lem} \label{lem=technical_lemma}
For $K$-invariant unit vectors $\xi \in X$ and $\eta \in X^*$, let $\varphi(g) = \langle \pi(g) \xi,\eta \rangle$ as above. Again, let $D=\mathrm{diag}(e^{v_1},\ldots,e^{v_{3n-6}}) \in \mathrm{SL}(3n-6,\R)$ be a diagonal matrix with $v_1 \geq \ldots \geq v_{3n-6}$ such that $v_{2n-3} = \ldots = v_{3n-6}=t$ for some $t<0$ and $v_{n-2}>0$. Let $s = \frac{1}{2n-4} \sum_{i=1}^{3n-6} |v_i|$. Then
\begin{equation} \label{eq=general_coefficent_to_nice}
  |\varphi(D) - \varphi(D(s,0,-s))| \leq 6(n-2) \widetilde{C}_X L^2 e^{-(\alpha_X-(3n-6)\gamma)|t|}.
\end{equation}
\end{lem}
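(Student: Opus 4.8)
The plan is to pass from the matrix $D$ to $D(s,0,-s)$ by at most $3(n-2)$ applications of Lemma~\ref{lem=coeff_GLn}, applied alternately to the coefficient $\varphi$ itself and to the coefficient $\widetilde\varphi(g)=\langle\widetilde\pi(g)\xi,\eta\rangle$ of the representation $\widetilde\pi\colon g\mapsto\pi((g^t)^{-1})$, each application being arranged so as to modify and produce only nonnegative diagonal exponents. The first thing I would record is that $\widetilde\pi$ is again a (strongly continuous) representation of $\mathrm{SL}(3n-6,\R)$ on $X$ with $\|\widetilde\pi(g)\|\leq Le^{\gamma\ell(g)}$, since $\ell((g^t)^{-1})=\ell(g)$ for our length function; that it agrees with $\pi$ on $K=\mathrm{SO}(3n-6)$, so $\xi,\eta$ are still $K$-invariant and $\widetilde\pi|_K$ is isometric; and that $\widetilde\varphi(\mathrm{diag}(e^{w_1},\dots,e^{w_{3n-6}}))=\varphi(\mathrm{diag}(e^{-w_1},\dots,e^{-w_{3n-6}}))$. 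Moreover both $\varphi$ and $\widetilde\varphi$ are $K$-bi-invariant, hence invariant under permuting the diagonal entries of a diagonal matrix (conjugate by a permutation matrix, corrected by a sign so as to lie in $\mathrm{SO}(3n-6)$), and I will use this repeatedly to place, at each stage, the $n-2$ smallest exponents (whenever they coincide) in the last $n-2$ coordinates, as Lemma~\ref{lem=coeff_GLn} requires. Finally, since $\widetilde\pi$ has the same bound $L$ and is isometric on $K$, applying Lemma~\ref{lem=coeff_GLn} to $\widetilde\varphi$ produces the same constant $\widetilde{C}_X$.

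Next I would isolate two elementary observations. Since the exponents of $D$ sum to $0$, its positive exponents have total mass $(n-2)s$ and its negative exponents total mass $(n-2)s$; as the latter already contains the $n-2$ entries equal to $t$, this gives $s\geq|t|$. And if, in an application of Lemma~\ref{lem=coeff_GLn} to $\varphi$ or $\widetilde\varphi$, the two modified exponents and their replacements are all $\geq0$, then the quantity $\mu$ of that lemma satisfies $\mu\geq0$, so, writing $t_0<0$ for the common value of the last $n-2$ exponents at that stage, one has $\mu-t_0\geq|t_0|$ and the step contributes at most $2\widetilde{C}_X L^2 e^{-\alpha_X(\mu-t_0)-(3n-6)\gamma t_0}\leq 2\widetilde{C}_X L^2 e^{-(\alpha_X-(3n-6)\gamma)|t_0|}$. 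In the argument I will only meet $|t_0|\in\{|t|,s\}$, and since $\alpha_X-(3n-6)\gamma>0$ by our standing assumption on $\gamma$ and $s\geq|t|$, such a step always costs at most $2\widetilde{C}_X L^2 e^{-(\alpha_X-(3n-6)\gamma)|t|}$. Thus it suffices to pass from $D$ to $D(s,0,-s)$ in at most $3(n-2)$ such steps.

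To achieve this I would write $P,Z,M$ for the sets of coordinates where the exponent of $D$ is $>0$, $=0$, $<0$ respectively, so that $\{1,\dots,n-2\}\subseteq P$, $\{2n-3,\dots,3n-6\}\subseteq M$ and $|P|+|Z|+|M|=3n-6$. In a first phase, applying Lemma~\ref{lem=coeff_GLn} to $\varphi$ and never touching the (at least $n-2$) coordinates where the exponent equals $t$, I redistribute the positive exponents among the coordinates in $P$ so that $n-2$ of them become $s$ and the others become $0$; since this is a transportation of $|P|$ nonnegative numbers onto prescribed nonnegative targets with equal total $(n-2)s$, it is realized in at most $|P|-1$ moves, in each of which only nonnegative exponents occur, so $\mu\geq0$ and $|t_0|=|t|$. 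After this phase the exponents are $s$ with multiplicity $n-2$, $0$ with multiplicity $|P|-(n-2)+|Z|$, and the original negative $v_i$ for $i\in M$. In a second phase I pass to $\widetilde\varphi$ by negating all exponents; the minimum is then $-s$, of multiplicity $n-2$, carried by the exponents that were equal to $s$, and, applying Lemma~\ref{lem=coeff_GLn} to $\widetilde\varphi$ while keeping those in the last coordinates, I redistribute the $|M|$ exponents $-v_i>0$ so that $n-2$ of them become $s$ and the remaining $|M|-(n-2)$ become $0$ --- possible because $\sum_{i\in M}(-v_i)=(n-2)s$ and $-v_i\leq|t|\leq s$ for each $i$ --- in at most $|M|-1$ moves, each with $\mu\geq0$ and $|t_0|=s$. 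The exponents are now $s,0,-s$, each of multiplicity $n-2$; that is, $\widetilde\varphi$ has been carried to the value $\varphi(D(s,0,-s))$. Summing the contributions and using $|P|+|M|\leq 3n-6$, one obtains
\[
|\varphi(D)-\varphi(D(s,0,-s))|\leq 2(|P|+|M|-2)\widetilde{C}_X L^2 e^{-(\alpha_X-(3n-6)\gamma)|t|}\leq 6(n-2)\widetilde{C}_X L^2 e^{-(\alpha_X-(3n-6)\gamma)|t|},
\]
which is the desired estimate.

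The hard part is the constraint that every step have $\mu\geq0$: it is only under this constraint that a step keeps the full exponential decay $e^{-\alpha_X|t|}$ rather than merely $e^{-\alpha_X|t|/2}$ (as in Lemma~\ref{lem=technical1}), and it forbids ever creating a new negative exponent. This is what forces the two-phase structure --- the negative exponents of $D$ must be left untouched in Phase~1 and handled, through the contragredient-type representation $\widetilde\pi$, only in Phase~2 --- and the delicate point is then that the whole rearrangement fit within $3(n-2)$ moves, which is exactly what the transportation bounds $|P|-1$ and $|M|-1$ (summing to at most $3(n-2)-2$) deliver; the inequality $s\geq|t|$ is what guarantees that the Phase~2 moves decay at least as fast as those of Phase~1.
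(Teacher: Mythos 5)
Your argument is correct and follows essentially the same route as the paper: iterate Lemma~\ref{lem=coeff_GLn} only on nonnegative exponents so that $\mu\geq 0$ and each step costs $2\widetilde{C}_XL^2e^{-(\alpha_X-(3n-6)\gamma)|t|}$, then handle the negative exponents through the Cartan automorphism $g\mapsto(g^t)^{-1}$ with the minimum now $-s$ and conclude via $s\geq|t|$. The only difference is bookkeeping --- you organize the redistribution as a generic transportation in at most $|P|-1$ (resp.\ $|M|-1$) two-coordinate moves, whereas the paper fixes the entries one at a time and compensates on the first eigenvalue --- and both give the bound $6(n-2)$.
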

\begin{proof} 
Set $k=n-2$, and let $j$ be an index such that $v_j \geq 0$ and $v_{j+1}\leq 0$. Then the assumptions on the $v_i$'s imply that $n-2 \leq j \leq 2n-4$ (i.e., $k \leq j \leq 2k$) and 
\begin{equation} \label{eq=relation_s_vi}
  \sum_{i=1}^j v_i = - \sum_{i=j+1}^{3k} v_i = \frac{1}{2} \sum_{i=1}^{3k} |v_i| = ks.
\end{equation}
Put $D_j := D$. For each $i$ satisfying $1 \leq i < j$, we construct a diagonal matrix $D_i$ as follows. Consider first the auxiliary diagonal matrix $\widetilde{D}_i$ having the same eigenvalues as $D_{i+1}$ except for its $(i+1)^{\textrm{th}}$ one, which equals the $(i+1)^{\textrm{th}}$ eigenvalue of $D(s,0,-s)$, and its first eigenvalue, which is modified in such a way that $\widetilde{D}_i$ has determinant $1$. If $i \geq k$, we set $D_i := \widetilde{D}_i$. If $i<k$, the first eigenvalue of $\widetilde{D}_i$ may not be the largest. However, we can rearrange the $i$ first eigenvalues in decreasing order, and we define $D_i$ to be this rearranged matrix.

By construction, the $(i+1)^{\textrm{th}}$ until the $j^{\textrm{th}}$ eigenvalue of $D_i$ coincide with those of $D(s,0,-s)$, whereas the last $3k-j$ ones coincide with those of $D$. Since $D_1$ has determinant $1$, the first eigenvalue $\lambda$ of $D_1$ satisfies $1 = \lambda e^{(k-1)s} \prod_{i={j+1}}^{3k} e^{v_i} = \lambda e^{-s}$ by \eqref{eq=relation_s_vi}, so that $\lambda = e^s$. Similarly, by \eqref{eq=relation_s_vi}, we see that for $i<k$, the largest eigenvalue of $D_i$ is greater than or equal to $e^s$.

For each $i$ satisfying $1 \leq i < j$, we apply Lemma \ref{lem=coeff_GLn} and obtain
\[
  |\varphi(D_{i+1})-\varphi(\widetilde{D}_i)| \leq 2\widetilde{C}_X L^2 e^{-\alpha_X(\mu-t)-(3n-6)\gamma t},
\]
where $e^\mu$ is the minimum of the first and the $(i+1)^{\textrm{th}}$ eigenvalues of $\widetilde{D}_i$ and of $D_{i+1}$. Using that $e^\mu \geq 1$ and that $\varphi(\widetilde{D}_i)=\varphi(D_i)$, we obtain that $|\varphi(D_{i+1})-\varphi(D_i)| \leq 2\widetilde{C}_X L^2 e^{-(\alpha_X - (3n-6)\gamma)|t|}$. Summing over $i$, we obtain
\[
  |\varphi(D) - \varphi(D_1)| \leq 2j\widetilde{C}_X L^2 e^{-(\alpha_X - (3n-6)\gamma)|t|}.
\]
Conjugating by the Cartan automorphism $\theta\colon g \mapsto (g^t)^{-1}$, the above procedure and inequality (for the representation $\pi \circ \theta$ and for $D$ replaced by $\theta(D_1)$, which satisfies the assumption of the lemma for $t=-s$) yields
\[
  |\varphi(D_1) - \varphi(D(s,0,-s))| \leq 2(3k-j)\widetilde{C}_X L^2 e^{-\alpha_X s+(3n-6)\gamma s}.
\]
By \eqref{eq=relation_s_vi}, we have $|t| \leq s$, and hence
\[
  |\varphi(D) - \varphi(D(s,0,-s))| \leq 6k\widetilde{C}_X L^2 e^{-(\alpha_X-(3n-6)\gamma)|t|}.
\]
\end{proof}
Let $\mathcal{D}$ denote the set of all matrices satisfying the assumptions of Lemma \ref{lem=technical_lemma}, i.e.~matrices of the form $\mathrm{diag}(e^{v_1},\ldots,e^{v_{3n-6}}) \in \mathrm{SL}(3n-6,\R)$ with $v_1 \geq \ldots \geq v_{3n-6}$ such that $v_{2n-3} = \ldots = v_{3n-6}=t$ for some $t<0$ and $v_{n-2}>0$. A direct consequence of \underline{Step 1} and Lemma \ref{lem=technical_lemma} is the following:
\begin{equation}\label{eq=convergence_in_D}
  \lim_{D \in \mathcal D,\;\ell(D) \to \infty} \| \pi(\widetilde{m}_D) - P\|_{B(X)} = 0.
\end{equation}
This we can use to give a proof of \underline{Step 2}.
\begin{proof}[Proof of \underline{Step 2}.]
Firstly, note that we can rephrase \eqref{eq=convergence_in_D} to
\[
  \lim_{g \in K \mathcal D K,\;\ell(g) \to \infty} \|\pi(\widetilde m_g)-P\| = 0.
\]
Consider the sequence $(D_i)_{i \in \N}$ in $\mathcal{D}$ given by $D_i=D(2i,-i,-i)$. We claim that for all $i,j \in \N$ and $k \in K$, we have $D_i k D_j \in K \mathcal D K$. This claim implies that for all $i \in \N$, we have 
\begin{equation}\label{eq=piDP=P}
  \pi(\widetilde m_{D_i}) P   = \lim_{j \to \infty} \int_K \pi(\widetilde m_{D_i k D_j}) dk = P.
\end{equation}
From letting $i$ tend to infinity, we can conclude that $P^2=P$. This proves that $P$ is a projection.

It remains to prove the claim. To this end, we prove that $g :=  D_i k D_j k^{-1} \in K \mathcal D K$. Equivalently, we have to show that the $n-2$ smallest singular values of $g$ are equal and that the $(n-2)$-th largest singular value is greater than $1$. Without loss of generality, we can assume that $i \leq j$. Note that $D_i$ acts as $e^{-i}\mathrm{Id}$ on a subspace of dimension $2n-4$ of $\R^{3n-6}$ and that $(k D_jk^{-1})$ acts as $e^{-j}\mathrm{Id}$ on another subspace of dimension $2n-4$. Hence, $g$ acts as $e^{-(i+j)}$ on the intersection of these subspaces, which has dimension at least $n-2$. Moreover, since $\|g^{-1}\| \leq \| D_i^{-1} \| \| D_j^{-1} \| \leq  e^{i+j}$, we see that $e^{-(i+j)}$ is actually a singular value of $g$ with multiplicity at least $n-2$, and that it is the smallest singular value. It remains to show that the $(n-2)$-th largest singular value of $g$ is greater than $1$, or equivalently, to find a subspace of dimension $n-2$ on which $\|gx\| > \|x\|$. This subspace is, in fact, the image of $\mathrm{span}\{e_1,\dots,e_{n-2}\}$ under $k$. Indeed, for an $x$ in this subspace, we have $\| k D_j k^{-1} x \| = e^{2j} \|x\|$. Therefore, $\|gx \| \geq \|D_i^{-1}\|^{-1} \| k D_j k^{-1} x \| \geq e^{2j-i} \|x\| > \|x\|$. This proves the claim.
\end{proof}
The key to \underline{Step 3} is the following lemma.
\begin{lem}\label{lem=vanishing_noninvariant_coeff} Let $\xi$ be a $K$-invariant unit vector of $X$, let $V$ be a nontrivial irreducible representation of $K$, and let $q \colon X \to V$ be a norm-$1$ $K$-equivariant linear map. Then
\[
  \lim_{D \in \mathcal D, \ell(D) \to \infty} \|q(\pi(D) \xi)\|_V = 0.
\]
\end{lem}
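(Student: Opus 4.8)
The plan is to reduce, by the same formal devices already used for the scalar $K$-bi-invariant coefficients, to the single family $D(s,0,-s)$, and then to establish the decay for that family by a mixing-type argument built on the coefficient estimates for copies of $\mathrm{SO}(n)$.

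\emph{Step 1: vector-valued versions of Lemmas \ref{lem=coeff_GLn}, \ref{lem=technical1}, \ref{lem=technical_lemma}.} Everything in those lemmas is obtained by repeated use of the ``$\mathcal A k_\delta\mathcal A$'' identity, so the only point is to check that this identity stays admissible when the scalar coefficient $g\mapsto\langle\pi(g)\xi,\eta\rangle$ is replaced by the $V$-valued map $\varphi(g)=q(\pi(g)\xi)$. In each such step $\mathcal A$ is a diagonal matrix which is a scalar multiple of the identity on the span of all coordinate axes moved by the relevant copy $U'\cong\mathrm{SO}(n-1)$ — this is exactly the computation $a_j=t/2=v_l/2$ ($l$ in the bottom block) appearing in the proof of Lemma \ref{lem=coeff_GLn}. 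Hence $\pi(\mathcal A)\xi$ is $U'$-invariant and $q\circ\pi(\mathcal A)$ is $U'$-equivariant into the finite-dimensional unitary $U'$-representation obtained by restricting $V$, so Proposition \ref{prop:nonequivariant} (equivalently \eqref{eq=coeff_noninv_SOn}) applies, now with constants also depending on $\dim V$; the dimension hypothesis of Proposition \ref{prop:nonequivariant} holds automatically because only rotations $k_\delta$ with $|\delta|<1$ occur, and for those the $U'$-stabilizer is a fixed conjugate of $\mathrm{SO}(n-2)$. Running the proof of Lemma \ref{lem=technical_lemma} verbatim then yields, for every $D\in\mathcal D$ with bottom exponent $t<0$ and with $s=\frac{1}{2n-4}\sum_i|v_i|$,
\[
\|\varphi(D)-\varphi(D(s,0,-s))\|_V\le C(X,V)\,L^2\,e^{-(\alpha_X-(3n-6)\gamma)|t|}.
\]
As $\ell(D)\to\infty$ inside $\mathcal D$ one has $|t|\to\infty$, hence also $s\ge|t|\to\infty$, and $\alpha_X-(3n-6)\gamma>0$; so it suffices to prove $\|q(\pi(D(s,0,-s))\xi)\|_V\to0$ as $s\to\infty$.

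\emph{Step 2: the matrices $D_s=D(s,0,-s)$.} A natural first move is one further round of averaging, merging the $e^s$-block into the $1$-block coordinate by coordinate — each step through a copy of $\mathrm{SO}(n)$ spanned by two of these axes together with the whole $e^{-s}$-block, with $|\delta|\le e^{-s}$ — so that, up to error $O(L^2 e^{-(\alpha_X-(3n-6)\gamma)s})$, one is reduced to $E_s=\mathrm{diag}(e^{s/2},\dots,e^{s/2},e^{-s},\dots,e^{-s})$ (with $2n-4$, resp.\ $n-2$, diagonal entries). Then $q(\pi(E_s)\xi)$ lies in the fixed space $V^{M}$ of the stabilizer $M\cong\mathrm{S}(\mathrm O(2n-4)\times\mathrm O(n-2))$ of $E_s$ in $K$, and one must show that this bounded sequence of vectors in the \emph{fixed} finite-dimensional space $V^M$ converges to $0$. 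Since for a nontrivial irreducible $V$ the space $V^M$ can be nonzero (e.g.\ $V=\mathrm{Sym}^2_0(\R^{3n-6})$), this cannot be read off from invariance and must be extracted from decay: the idea is to feed $q(\pi(E_s)\xi)$ back into the $\mathrm{SO}(n)$-coefficient estimates along further hidden copies of $\mathrm{SO}(n)$ inside $K$, using the $\pi(K)$-invariance of $\xi$ together with $V^{K}=\{0\}$, so that the subgroup of $K$ under which $q(\pi(E_s)\xi)$ is asymptotically invariant can be enlarged step by step until it exhausts $K$, forcing the limit to vanish.

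\emph{Main obstacle.} Step 1 is essentially bookkeeping once the commutation of $\mathcal A$ with $U'$ is observed; the genuine difficulty is this last step of Step 2. As soon as one has reached $E_s$ the ``averaging'' toolkit is exhausted: any further rotation merging eigenspaces necessarily involves the \emph{minimal} eigenvalue, for which the exponent $\mu$ of Lemma \ref{lem=coeff_GLn} equals $t$ and the bound on $|\delta|$ is only $1$, so the exponential error no longer decays. Getting past this requires a genuine Mautner/mixing argument for slowly-growing (or polynomially-growing) Banach-space representations — the analogue, for the ``non-generic'' element $D(s,0,-s)$, of what Lafforgue does for $\mathrm{SL}(3,\R)$ in the Hilbert-space case — and keeping the accumulated exponential errors under control through the inductive enlargement of the invariance subgroup is the technical heart of the proof.
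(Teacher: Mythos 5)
The heart of the lemma is exactly the part you leave open at the end of Step 2, and the obstacle you describe there is not real: no Mautner/mixing argument and no inductive enlargement of an invariance subgroup with accumulating errors is needed. The paper finishes with the same toolkit you already have, namely the $V$-valued estimate \eqref{eq=coeff_noninv_SOn} coming from Proposition \ref{prop:nonequivariant}/Lemma \ref{lem=coefficients_nonbi-invariant_Banach_rep_SOn}, applied once for each pair $(i,j)$ with $i\le n-2<j\le 2n-4$ (and the Cartan-conjugate pairs). The trick is not to merge eigenvalues of $D(t,0,-t)$ any further, but to choose, for each such pair, the auxiliary diagonal matrix $\mathcal A$ equal to $D(\tfrac t2,0,-\tfrac t2)$ except that its $i$-th entry is $e^{t}$ and its $j$-th entry is $e^{-t/2}$; it commutes with the relevant copy of $\mathrm{SO}(n-1)$ and $\ell(\mathcal A)=t$. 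Inside the corresponding copy of $\mathrm{SO}(n)$ one finds $|\delta|\le e^{-t}$ with $\mathcal A k_\delta\mathcal A\in \mathrm{SO}_{i,j}\,D(t,0,-t)\,\mathrm{SO}_{i,j}$, while at $\delta=0$ the matrix $\mathcal A k_0\mathcal A$ lies in $\mathrm{SO}_{i,j}D_0\mathrm{SO}_{i,j}$ with $D_0$ \emph{commuting} with $\mathrm{SO}_{i,j}$, so that $q(\pi(D_0)\xi)$ is $\mathrm{SO}_{i,j}$-invariant. Thus \eqref{eq=coeff_noninv_SOn} shows directly that $q(\pi(D(t,0,-t))\xi)$ is within $O(e^{-(\alpha_X-2\gamma)t})$ of the fixed space of $\mathrm{SO}_{i,j}$, independently for each pair; since these subgroups generate $K$, $V$ is finite-dimensional with $V^K=0$, and the norms are bounded (by the first part), the limit is $0$. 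So your claim that ``the averaging toolkit is exhausted'' once you reach $D(s,0,-s)$ (or your $E_s$) is where the proposal genuinely breaks down: the detour through $E_s$ is unnecessary, and the missing ingredient is this choice of $\mathcal A$ adapted to each $\mathrm{SO}_{i,j}$, not a new mixing-type input.

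A secondary point: your Step 1 cannot be run ``verbatim'' in the $V$-valued setting as you state it. The map $\varphi(g)=q(\pi(g)\xi)$ is only $K$-equivariant, not $K$-bi-invariant, so when you replace $\mathcal A k_\delta\mathcal A$ by an element of $KDK$ you pick up an unknown rotation acting through $\rho_V$; the verbatim argument therefore yields $\|\rho_V(u)\varphi(D)-\rho_V(u')\varphi(D')\|_V\le\epsilon$ for uncontrolled $u,u'$, not $\|\varphi(D)-\varphi(D')\|_V\le\epsilon$. What does transfer, by unitarity of $\rho_V$, is the estimate on the norms, i.e.\ \eqref{eq=coeff_norm_noninv_SOn}, and this is precisely how the paper argues: the scalar function $g\mapsto\|q(\pi(g)\xi)\|_V$ is $K$-bi-invariant and satisfies the same local H\"older estimates, hence Lemmas \ref{lem=coeff_GLn}--\ref{lem=technical_lemma} apply to it and show that the limit along $\mathcal D$ exists and equals the limit along $D(t,0,-t)$. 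Since you only use Step 1 to reduce to the family $D(s,0,-s)$, this part of your proposal is repairable by downgrading to norms; but the vector-valued inequality you assert is not justified, and the same equivariance issue would also undercut your reduction of the vector (rather than its norm) to $E_s$ in Step 2.
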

\begin{proof}[Proof of Lemma \ref{lem=vanishing_noninvariant_coeff}]
Take $\xi$, $V$ and $q$ as in the statement of the lemma. We will apply \eqref{eq=coeff_noninv_SOn} to the restriction of $\pi$ to various subgroups isomorphic to $\mathrm{SO}(n)$ in $K=\mathrm{SO}(3n-6)$ and to the restriction to these subgroups of the map $K \ni k \mapsto q(\pi(\mathcal{A} k \mathcal{A}) \xi)$ for suitable choices of diagonal matrices $\mathcal{A}$. Suitable means that $\mathcal{A}$ commutes with the copy of $\mathrm{SO}(n-1)$ in the subgroup isomorphic to $\mathrm{SO}(n)$ in $K$ under consideration. First the same proof as for the $K$-bi-invariant coefficients (\underline{Step 1} and Lemma \ref{lem=technical_lemma}) shows that $\lim_{D \in \mathcal D, \ell(D) \to \infty} \|q(\pi(D)\xi)\|_V$ exists, since the function $g \mapsto \|q(\pi(g)\xi)\|_V$ is $K$-bi-invariant and, by \eqref{eq=coeff_norm_noninv_SOn}, satisfies the same local H\"older continuity estimates as the $K$-bi-invariant coefficients of $\pi$. To prove that this limit equals zero, we therefore only have to prove that
\[
  \lim_{t \to \infty} \| q(\pi(D(t,0,-t))\xi)\|_V = 0.
\]
For $i,j \in \{1,2,\dots,3n-6\}$ satisfying $i \neq j$, let us introduce the subgroup $\mathrm{SO}_{i,j}$ of $\mathrm{SO}(3n-6)$ that acts as the identity on $\mathrm{span}\{e_k \mid k \notin \{i,j\}\}$, where $e_k$ denotes the $k^{\textrm{th}}$ coordinate basis vector of $\R^{3n-6}$. Thus, for $i \neq j$, the group $\mathrm{SO}_{i,j}$ is isomorphic to $\mathrm{SO}(2)$. Consider $i\leq n-2 < j \leq 2n-4$. Let $f \colon \R^n \to \R^{3n-6}$ be an isometry sending the first coordinate vector of $\R^n$ to $e_i$, the second to $e_j$ and the orthogonal complement of the first two coordinate vectors to $\mathrm{span}\{e_k \mid 2n-4<k\leq 3n-6 \}$. The map $f$ allows us to identify $\mathrm{SO}(n)$ with the subgroup of the elements of $K$ that are the identity on the orthogonal complement of the image of $f$, and through this identification $\mathrm{SO}(n-1)$ consists of the elements that are the identity on the orthogonal complement of $\{e_j\} \cup \{e_k \mid k > 2n-4\}$. Consider the diagonal matrix $\mathcal{A}$ that has the same entries as $D(\frac{t}{2},0,-\frac{t}{2})$  except for the $i^{\textrm{th}}$ eigenvalue, which is equal to $e^{t}$, and the $j^{\textrm{th}}$ one, which is equal to $e^{-\frac{t}{2}}$. By construction, $\mathcal{A}$ commutes with $\mathrm{SO}(n-1)$ and $\ell(\mathcal{A}) = t$. Consider, as in the proof of Lemma \ref{lem=coeff_GLn}, $k_\delta \in K$ a rotation of angle $\arccos(\delta) \in [0,\pi]$ in the plane generated by the $i$-th and $j$-th coordinate vectors of $\R^2$. Then \eqref{eq=coeff_noninv_SOn} implies that
\[
  \|q(\pi(\mathcal{A} k_\delta \mathcal{A}) \xi ) - q(\pi(\mathcal{A} k_0 \mathcal{A}) \xi )\|_V \leq C(X,V)|\delta|^\alpha L^2 e^{2\gamma t}.
\]
Then $\mathcal{A} k_\delta \mathcal{A}$ coincides with $D(t,0,-t)$ on $\mathrm{span}\{e_k \mid k \notin \{i,j\}\}$, and in the plane $\mathrm{span}\{e_i,e_j\}$ its matrix is given by
\[
  \begin{pmatrix} e^t & 0 \\ 0 & e^{-t/2} \end{pmatrix} \begin{pmatrix} \delta & -\sqrt{1-\delta^2} \\ \sqrt{1-\delta^2} & \delta \end{pmatrix} \begin{pmatrix} e^t & 0 \\ 0 & e^{-t/2} \end{pmatrix} \in \mathrm{SO}(2) \begin{pmatrix} e^{x_\delta} & 0 \\ 0 & e^{y_\delta} \end{pmatrix} \mathrm{SO}(2)
\]
for some $x_\delta,y_\delta \in \R$ satisfying $x_\delta \geq y_\delta$ and ${x_\delta}+{y_\delta}=t$. If $D_\delta$ is the diagonal matrix with the same diagonal entries as $D(t,0,-t)$ except for the $i^{\textrm{th}}$ one, which is $e^{x_\delta}$, and the $j^{\textrm{th}}$ one, which is $e^{y_\delta}$, then in particular $\mathcal{A} k_\delta \mathcal{A} \in \mathrm{SO}_{i,j} D_\delta \mathrm{SO}_{i,j}$. As in the proof of \cite[Lemma 4.3]{delaatdelasalle2}, there exists a $\delta$ satisfying $|\delta|\leq e^{-t}$ such that $x_\delta= t$ and $y_\delta = 0$, implying that $D_\delta = D(t,0,-t)$. For $\delta=0$, we have $x_\delta = y_\delta = \frac{t}{2}$, and, in particular, $D_0$ commutes with $\mathrm{SO}_{i,j}$. Hence $q(\pi(\mathcal{A} k_0 \mathcal{A}) \xi) = q(\pi(D_0)\xi)$ is $\mathrm{SO}_{i,j}$-invariant, and the previous inequality becomes 
\[
  \|q(\pi(D(t,0,-t) \xi)) - q(\pi(D_0) \xi)\|_V \leq C(X,V) C^2 e^{-(\alpha - 2\gamma) t},
\]
which goes to zero as $t\to \infty$ by our assumption on $\gamma$. Hence, $q(\pi(D(t,0,-t) \xi))$ is at distance $o(1)$ from the subspace  $V_{i,j} \subset V$  of $\mathrm{SO}_{i,j}$-invariant vectors. This holds for $i\leq n-2 < j \leq 2n-4$. Similarly, by conjugating by the Cartan automorphism $g \mapsto (g^t)^{-1}$, we see that $q(\pi(D(t,0,-t) \xi))$ is at distance $o(1)$ from the subspace of $\mathrm{SO}_{i,j}$-invariant vectors for all $n-2<j\leq 2n-4 < i$. Hence, $q(\pi(D(t,0,-t) \xi))$ is at distance $o(1)$ from the intersection $\cap_{i,j} V_{i,j}$ of these subspaces, which is the subspace of vectors in $V$ invariant under all the subgroups $\mathrm{SO}_{i,j}$ for $i\leq n-2 < j \leq 2n-4$ or $n-2<j\leq 2n-4 < i$. But these subgroups generate the whole group $K$ and $V$ is assumed to have no nonzero $K$-invariant vectors. This proves that $q(\pi(D(t,0,-t) \xi))$ is at distance $o(1)$ from $0$.
\end{proof}
We can now conclude the proof of Proposition \ref{prp=strongtsln} by giving the proof of \underline{Step 3}.
\begin{proof}[Proof of \underline{Step 3}.]
As in the proof of \underline{Step 2}, we set $D_i = D(2i,-i,-i)$, and we will use that $D_i K D_j \subset \mathcal D$ for all $i,j$.

To see that Lemma \ref{lem=vanishing_noninvariant_coeff} indeed implies that every vector $\xi$ in the image of $P$ is $\pi(G)$-invariant, observe firstly that it is clear that $\xi$ is $\pi(K)$-invariant, because each measure $m_u$ is left $K$-invariant. Writing
\[
  \pi(D_1) \xi = \lim_{j \to \infty} \pi(D_1) \pi(\widetilde m_{D_j}) \xi = \lim_j \int_K \pi(D_1 k D_j)\xi dk
\]
and recalling that $D_1 k D_j \in K \mathcal D K$ for all $k \in K$, we see that $q(\pi(D_1) \xi) = 0$ for every $V$ and $q\colon X \to V$ as in Lemma \ref{lem=vanishing_noninvariant_coeff}. By the Peter-Weyl theorem for representations of compact groups on Banach spaces (see \cite[Theorem 2.5]{delasalle1}), this implies that $\pi(D_1) \xi$ is $\pi(K)$-invariant. From \eqref{eq=piDP=P}, we obtain
\[
  \xi = \pi(\widetilde m_{D_1}) P \xi= \int_K \pi(k) \pi(D_1) \xi dk,
\]
and it follows that $\xi = \pi(D_1)\xi$. Hence, $\xi$ is invariant under $\pi(g)$ for all $g$ in the closed subgroup of $G$ generated by $D_1$ and $K$, which is exactly the group $G$.
\end{proof}

\section{Property (FF$_X$) for $\mathrm{SL}(N,\mathbb{R})$ and $\mathrm{SL}(N,\mathbb{Z})$} \label{sec=ffpp}
In this section, we present a proof of Theorem \ref{thm=ffpp}, without appealing to strong property (T). Instead, we will deduce Theorem \ref{thm=ffpp} from property (T$_X$) for $\mathrm{SL}(N,\mathbb{R})$, which has been obtained in \cite{delaatdelasalle2} for reflexive Banach spaces by a much simpler argument than that in the proof of  Theorem~\ref{thm=strongt}. For non-reflexive Banach spaces, property (T$_X$) for $\mathrm{SL}(N,\mathbb{R})$ is a particular case of Theorem~\ref{thm=strongt} for isometric representations, and (although all steps are needed) obtaining just property (T$_X$) for $\mathrm{SL}(N,\mathbb{R})$ along the lines of the proof of Theorem~\ref{thm=strongt} involves simpler computations, as one does not have to take care of the growth of the representations. From this, we will be able to deduce the boundedness property of (quasi-)$1$-cocycles. In \cite{baderfurmangelandermonod}, an argument of this sort was provided for simple algebraic groups. However, there a Howe--Moore type property for isometric representations on superreflexive Banach spaces due to Shalom (see Appendix 9 in \cite{baderfurmangelandermonod}) is used, and it is not known how to extend their arguments beyond the reflexive case. In this article, we utilize a different method, namely, ``increasing the rank of the group". This part of the argument is established in Proposition \ref{prp=fromttoffpp} and is based on previous work of the second-named author \cite{mimura}. We will first state a lemma, which we strictly speaking do not need in order to prove the proposition. The reason for stating this lemma is the key role it plays if $X$ is reflexive, and because it motivates us to consider the seminorm $\mathcal N$ for a general $X$ in the proof of Proposition~\ref{prp=fromttoffpp}.
\begin{lem}\label{lem=dual}
Let $W$ be a topological group, and let $(Y,\pi)$ be a weakly almost periodic $W$-space, i.e.~a Banach space $Y$ with an isometric representation $\pi \colon W \rightarrow O(X)$ with relatively weakly compact orbits (i.e.~the weak closure is weakly compact). Then for any $\eta \in Y$, we have $\sup_{w\in W} \|\pi(w)\eta -\eta \| \geq  \|\overline{\eta} \|$, where $\xi\mapsto \overline{\xi}$ denotes the quotient map $Y\twoheadrightarrow Y/Y^{\pi(W)}$.
\end{lem}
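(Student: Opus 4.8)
The plan is to manufacture a single $\pi(W)$-invariant vector $\eta_0$ that lies in the closed convex hull of the orbit of $\eta$, and then to compare $\eta$ with $\eta_0$. Write $s=\sup_{w\in W}\|\pi(w)\eta-\eta\|$; if $s=\infty$ there is nothing to prove, so assume $s<\infty$. Note that $Y^{\pi(W)}=\bigcap_{w}\ker(\pi(w)-\mathrm{Id})$ is a closed subspace, so the quotient norm makes sense and $\|\overline{\eta}\|=\mathrm{dist}(\eta,Y^{\pi(W)})$. Let $O=\{\pi(w)\eta:w\in W\}$ be the orbit and let $C$ be the closed convex hull of $O$. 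By hypothesis $O$ is relatively weakly compact, so by the Krein--\v{S}mulian theorem $C$ is weakly compact; by Mazur's theorem the weak and norm closures of $\mathrm{conv}(O)$ agree, so $C$ is also the norm-closed convex hull of $O$ and is nonempty (it contains $\eta=\pi(e)\eta$).

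Next I would observe that each $\pi(w)$ is linear and satisfies $\pi(w)O\subseteq O$, hence maps $\mathrm{conv}(O)$ into itself and, by continuity, maps $C$ into $C$. Thus $W$ acts on the weakly compact convex set $C$ by affine isometries. Since an isometry $T$ satisfies $\|Tx-Ty\|=\|x-y\|$, the family $\{\pi(w)|_C:w\in W\}$ is a noncontracting semigroup of affine maps of $C$, so the Ryll--Nardzewski fixed point theorem applies and produces a common fixed point $\eta_0\in C$, i.e.\ $\pi(w)\eta_0=\eta_0$ for all $w\in W$, so $\eta_0\in Y^{\pi(W)}$. This is the step in which the weak almost periodicity hypothesis is genuinely used, and I expect it to be essentially the only substantial point of the argument; everything else is bookkeeping.

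It remains to estimate $\|\eta-\eta_0\|$. Any element of $\mathrm{conv}(O)$ has the form $\sum_{i=1}^m t_i\pi(w_i)\eta$ with $t_i\ge 0$ and $\sum_i t_i=1$, and then
\[
  \Bigl\| \eta - \sum_{i=1}^m t_i\pi(w_i)\eta \Bigr\| = \Bigl\| \sum_{i=1}^m t_i\bigl(\eta - \pi(w_i)\eta\bigr) \Bigr\| \le \sum_{i=1}^m t_i\,\|\eta - \pi(w_i)\eta\| \le s.
\]
Since $\eta_0$ is a norm-limit of such convex combinations, passing to the limit gives $\|\eta-\eta_0\|\le s$. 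Finally, because $\eta_0\in Y^{\pi(W)}$, the quotient norm yields
\[
  \|\overline{\eta}\| = \mathrm{dist}(\eta,Y^{\pi(W)}) \le \|\eta-\eta_0\| \le s = \sup_{w\in W}\|\pi(w)\eta-\eta\|,
\]
which is exactly the asserted inequality.
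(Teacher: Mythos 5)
Your argument is correct and is essentially the paper's own proof: both take the closed convex hull of the orbit, invoke weak compactness of that hull, apply the Ryll--Nardzewski fixed point theorem to obtain an invariant vector in it, and then compare with $\eta$ via the quotient (distance) norm; the paper merely phrases the final estimate contrapositively. Your write-up just makes explicit the bookkeeping (the bound $\|\eta-\eta_0\|\leq s$ and the noncontracting hypothesis for Ryll--Nardzewski) that the paper leaves implicit.
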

\begin{proof}
Let $\eta \in Y$, and let $B$ denote the closure of the convex hull of the set $\{\pi(w)\eta \mid w\in W\}$. Then $B$ is weakly compact because the convex hull of a relatively weakly compact set is relatively weakly compact. By the Ryll-Nardzewski fixed point theorem, there exists a $\pi(W)$-fixed point in $B$. If $\sup_{w\in W} \|\pi(w)\eta -\eta \| <  \|\overline{\eta} \|$, then $B\cap Y^{\pi(W)}=\emptyset$, but this is absurd.
\end{proof}
The main step in the proof of Theorem~\ref{thm=ffpp} is to prove the following proposition. We state it in a full generality. Recall that for an associative and unital topological ring (not necessarily commutative) $A$ and for $n\geq 2$, the matrix $e_{i,j}(a)$, where $a\in A$, $1\leq i\leq n$ and $1\leq j\leq n$ with $i\ne j$, denotes the element in $\mathrm{Mat}(n,A)$ whose diagonal entries are all $1$ and whose off-diagonal entries are $0$ for all but the $(i,j)$-th one, which equals $a$. We call such a matrix an elementary matrix. The elementary group $\mathrm{E}(n,A)$ denotes the multiplicative group generated by all elementary matrices in $\mathrm{Mat}(n,A)$ equipped with the natural topology induced from the topology on $A$. Note that by the commutation relation $[e_{i,j}(a_1),e_{j,k}(a_2)]=e_{i,k}(a_1a_2)$ for $i\ne j\ne k\ne i$ and $a_1,a_2\in A$, the group $\mathrm{E}(n,A)$ is compactly generated if the ring $A$ is compactly generated and $n\geq 3$. If $A$ is a commutative and euclidean ring (such as $\mathbb{R}$ or $\mathbb{Z}$), then $\mathrm{E}(n,A)$ coincides with $\mathrm{SL}(n,A)$. 
\begin{prp} \label{prp=fromttoffpp}
Let $A$ be an associative and unital topological ring that is compactly generated, let $X$ be a Banach space and $n \geq 2$. Assume that the pair $\mathrm{E}(n,A)\ltimes A^n > A^n$ has weak relative property (T$_X$). Then the pair $\mathrm{E}(n+1,A)\ltimes A^{n+1} > A^{n+1}$ has relative property (FF$_X$).
\end{prp}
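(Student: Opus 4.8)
The plan is to reduce the assertion to weak relative property (T$_X$) of the subpairs $\mathrm{E}(n,A)\ltimes A^n\triangleright A^n$ sitting inside $G:=\mathrm{E}(n+1,A)\ltimes A^{n+1}$, and then to run the argument of \cite{mimura} that converts such a relative-(T)-type statement into a boundedness statement for quasi-cocycles. Fix an isometric representation $\rho\colon G\to O(X)$ and a quasi-$1$-cocycle $c\colon G\to X$ of defect $D$; the goal is to show that $c$ is bounded on $N:=A^{n+1}$. I would first record the routine quasi-cocycle estimates $\|c(e)\|\le D$, $\|c(g^{-1})+\rho(g^{-1})c(g)\|\le 2D$, $\|c(g_1g_2)\|\le\|c(g_1)\|+\|c(g_2)\|+D$, together with the identities
\[
 c(ghg^{-1})=(1-\rho(ghg^{-1}))c(g)+\rho(g)c(h)+O(D),\qquad
 c([g,h])=(1-\rho(ghg^{-1}))c(g)+(\rho(g)-\rho([g,h]))c(h)+O(D),
\]
where $O(D)$ denotes a vector of norm at most a universal multiple of $D$. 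A first consequence: if a subgroup is generated by $m$ subgroups on each of which $c$ is bounded, every element being a product of at most $m$ of their elements, then $c$ is bounded on it; since $N=H_1H_2$ with $H_j:=\{v\in N:v_j=0\}\cong A^n$, it suffices to bound $c$ on each $H_j$.

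Next I would introduce the seminorm $\mathcal N(\xi):=\sup_{r\in N}\|\rho(r)\xi-\xi\|$ on $X$; it is $\rho(G)$-invariant because $N\triangleleft G$, satisfies $\mathcal N\le 2\|\cdot\|$, and, by Lemma~\ref{lem=dual}, coincides up to a factor $2$ with the quotient norm on $X/X^{\rho(N)}$ when $X$ is reflexive — for general $X$ one has to work with $\mathcal N$ itself, which is exactly why it is introduced. Writing $M_j\cong\mathrm{E}(n,A)$ for the subgroup of $\mathrm{E}(n+1,A)$ acting on the coordinates other than $j$, the pairs $M_j\ltimes H_j\triangleright H_j$ are of the form covered by the hypothesis; since $N=H_1H_2$, $\langle M_1,M_2\rangle=\mathrm{E}(n+1,A)$ (here $n+1\ge3$ is used), and $\sup_{r\in H_1H_2}\|\rho(r)\xi-\xi\|\le\sum_{j=1,2}\sup_{h\in H_j}\|\rho(h)\xi-\xi\|$, the two weak relative (T$_X$) inequalities assemble into a single inequality $\sup_{s\in S}\|\rho(s)\xi-\xi\|\ge\tilde\epsilon\,\mathcal N(\xi)$ valid for all $\xi\in X$, with $\tilde\epsilon>0$ and $S$ a compact generating set of $G$. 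In other words, the pair $G\triangleright N$ itself has weak relative property (T$_X$).

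The heart of the proof is then to deduce, following \cite{mimura}, relative property (FF$_X$) for $G\triangleright N$ from weak relative property (T$_X$) for $G\triangleright N$ together with the algebraic structure. The additional input is that $N=A^{n+1}$ is boundedly generated by commutators $[e_{i,j}(a),v]$ with $a$ in the compact generating set of $A$ and $v\in N$ (this rests on $n+1\ge2$ and on $A$ being compactly generated), and that $\mathrm{E}(n+1,A)$ is perfect and acts on $A^{n+1}$ with very few orbits — these last facts again use the passage $n\rightsquigarrow n+1$. One feeds the commutator relations into the second identity above and applies $\sup_{s\in S}\|\rho(s)\xi-\xi\|\ge\tilde\epsilon\,\mathcal N(\xi)$ along $c$. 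The $\rho(N)$-invariant part of $c|_N$, on which $\mathcal N$ vanishes and the spectral gap says nothing, has to be treated separately: there $c|_N$ is a quasi-homomorphism $N\to X^{\rho(N)}$ which is $\mathrm{E}(n+1,A)$-equivariant up to a bounded error, and the perfectness and orbit structure of the $\mathrm{E}(n+1,A)$-action force it to be bounded.

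The step I expect to be the main obstacle is this last one, and the reason is that the naive seminorm estimates are \emph{circular}: conjugating by a generator only reproduces $\sup_{h\in N}\|c(h)\|$ on the right-hand side with a coefficient that is at least $1$ (the constant $\tilde\epsilon$ need not exceed $1$), so one obtains a tautology rather than an a priori bound. Breaking the circularity is exactly the combinatorial core of \cite{mimura}; in practice I would argue by contradiction: if $c$ were unbounded on $N$, rescale a sequence $\xi_k:=c(h_k)/\|c(h_k)\|$ with $h_k\in N$, $\|c(h_k)\|\to\infty$, and pass to an ultraproduct of (translated copies of) the representation $\rho$, so that the defect $D$ and the $\rho(N)$-invariant contribution both become negligible in the limit, producing a unit vector that is $S$-almost invariant yet displaced by $N$ — contradicting $\sup_{s\in S}\|\rho(s)\xi-\xi\|\ge\tilde\epsilon\,\mathcal N(\xi)$. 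Checking that the limit representation is well defined and strongly continuous (where compact generation of $A$ re-enters) and that the invariant part truly disappears is the delicate point; once it is in place, $c$ is bounded on $N$, which is the assertion of the proposition.
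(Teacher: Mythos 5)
Your setup (the seminorm $\mathcal N$, the quasi-cocycle identities, the copies of $\mathrm{E}(n,A)\ltimes A^n > A^n$ inside $G$, and the assembled inequality $\sup_{s\in S}\|\rho(s)\xi-\xi\|\geq\tilde\epsilon\,\mathcal N(\xi)$) is consistent with the paper's proof, but the step you yourself flag as the main obstacle is a genuine gap, and the fix you propose does not work. Weak relative property (T$_X$) for $G\triangleright N$ alone cannot yield relative (FF$_X$), and your contradiction-via-ultraproduct argument fails on both counts: (i) the rescaled vectors $\xi_k=c(h_k)/\|c(h_k)\|$ are only almost invariant under generators that commute with $h_k$ (for the other generators one only gets $\rho(s)c(h_k)\approx c(sh_ks^{-1})$ up to bounded error, which is not small relative to $\|c(h_k)\|$), and (ii) more fundamentally, the limit vector is \emph{not} displaced by $N$: the commutation trick plus the hypothesis applied to the vectors $c(h)$ shows precisely that $\mathcal N(c(h))$ is uniformly bounded on $H=A^{n+1}$ (this is Step 1 of the paper's proof), so $\mathcal N(\xi_k)\to 0$ and the ultraproduct limit is $N$-invariant, giving no contradiction with the spectral-gap inequality. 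In other words, the ``$\rho(N)$-invariant contribution'' does not become negligible -- if $c$ is unbounded on $N$ it is the dominant contribution, and handling it is the entire content of the proposition; appealing to perfectness and the orbit structure of the $\mathrm{E}(n+1,A)$-action is a gesture, not an argument.

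What the paper actually does at this point is concrete and quite different. From the identity $(\gamma,0)(I,u)=(I,\gamma u)(\gamma,0)$ and the quasi-cocycle relation one gets \eqref{eq:cocycle_conjugation}, i.e. $c(\gamma u)\approx\rho((\gamma,0))c(u)+(I-\rho(\gamma u))c((\gamma,0))$. Combining this with the Step 1 bound gives a bound on $\mathcal N\bigl(\rho(w)c((\gamma,0))-c((\gamma,0))\bigr)$ for all $w\in H$, and a Ces\`aro-averaging argument (averaging $\rho(iw)c((\gamma,0))$ over $i$) upgrades this seminorm bound to the norm bound $\|(I-\rho(w))c((\gamma,0))\|\leq M'$, i.e. $\mathcal N(c((\gamma,0)))\leq M'$; this averaging is exactly the substitute for reflexivity/Lemma~\ref{lem=dual} and is absent from your proposal. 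Finally, since $\gamma u\in H$, the term $(I-\rho(\gamma u))c((\gamma,0))$ is controlled by $\mathcal N(c((\gamma,0)))$, so $\|c(\gamma u)\|\leq M'+2\Delta+\|c(u)\|$, and every element of $A^{n+1}$ is a sum of at most $n+1$ elements of the form $\gamma u$ with $u\in S\cap H$; this bounded generation by ``elementary times generator'' elements, fed through \eqref{eq:cocycle_conjugation}, is what bounds $c$ on $N$, including along the $\rho(N)$-invariant directions where the spectral gap says nothing. Without an argument of this type (or some other mechanism controlling the invariant part), your proof does not go through.
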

We refer to Definition~\ref{def=ffpp} for the notion of weak relative property (T$_X$) and relative property (FF$_X$). Proposition \ref{prp=fromttoffpp} is a generalization of \cite[Theorem~6.4]{mimura} beyond the superreflexive case. The essential idea of the first part of the proof comes from \cite[Section~3]{mimura}, but we include a full proof for completeness. The proof is formulated as quantitatively as possible in terms of the norm bound of quasi-$1$-cocycles.
\begin{proof}
Let $Q$ be a symmetric (i.e.~closed under taking the additive inverse) compact generating set of $A$ that contains the identity $1$. Let $G=\mathrm{E}(n+1,A)\ltimes A^{n+1}$, and realize $G$ inside $\mathrm{E}(n+2,A)$ by putting $\mathrm{E}(n+1,A)$ in the upper left corner of $\mathrm{E}(n+2,A)$ and $H:=A^{n+1}$ in the first $n+1$ entries of the $(n+2)$-th column. Then the set $S:=\{e_{i,j}(q) \in \mathrm{E}(n+2,A) \mid i\ne j, q\in Q\} \cap G$ is a compact generating set of $G$. We define $H_1 ,H_2 < H$ in the following way: $H_1 \cong A$ is the additive group inside $H$ on the $(1,n+2)$-th entry of $\mathrm{E}(n+2,A)$, and $H_2 \cong A^n$ is the additive group inside $H$ corresponding to the ``rest of $H$'', i.e.~the $(2,n+2)$-th, $(3,n+2)$-th, $\ldots$ and $(n+1,n+2)$-th entries of $\mathrm{E}(n+2,A)$. We define $S_0\subset S$ as the set of all elements of $S$ whose $(i,1)$-th entry is zero for $i=2,\dots,n+1$. Finally, define $G_1,G_2 <G$ and $L \triangleleft G_1$ by
\begin{align*}
&G_1:=\left\{\left(\begin{array}{ccc}
1 & (v')^t & 0 \\
0 & \gamma' & 0 \\
0 & 0 &1
\end{array}\right)\middle| \ \gamma'\in \mathrm{E}(n,A), v'\in A^n
\right\},\\ 
&G_2:=\left\{\left(\begin{array}{ccc}
1 & 0 & 0 \\
0 & \gamma' & v' \\
0 & 0 &1
\end{array}\right)\middle| \ \gamma'\in \mathrm{E}(n,A), v'\in A^n
\right\}, \ \mathrm{and}\ \\
&L:=\left\{g \in G_1 \mid \gamma'=I_n\right\}.
\end{align*}
Let $\rho \colon G\to O(X)$ be an isometric representation. Because $H$ is normal in $G$, the representation $\rho$ can be restricted to $X^{\rho(H)}$ as an isometric representation of $G$. Let $c\colon G\to X$ be a quasi-$1$-cocycle into $\rho$. Then $c$ induces a map $\overline{c}\colon G\to X/X^{\rho(H)}$, and $\overline{c}$ is a quasi-$1$-cocycle into $\overline{\rho}$. Define $\Delta<\infty $ as the defect of $c$ and $M<\infty$ as $\sup_{s\in S}\|c(s)\|$. 

For $x \in X$ let us denote $\mathcal{N}(x) = \sup_{w \in H} \|\rho(w) x - x\|$. Then $\mathcal{N}$ is a $\rho(G)$-invariant seminorm on $X$ (recall that $H$ is normal in $G$), which plays the role of $\|\cdot\|_{X/X^{\rho(H)}}$. In fact, if the representation $(X,\rho)$ is weakly almost periodic (see \cite{baderrosendalsauer} for the definition), which is in particular the case if $X$ is reflexive, then $\|x\|_{X/X^{\rho(H)}} \leq \mathcal{N}(x) \leq  2\|x\|_{X/X^{\rho(H)}}$ (see the discussion following Definition~\ref{def=ffpp}).

The proof roughly consists of three parts. Firstly, we will show that \linebreak $\sup_{h \in H} \mathcal{N}(c(h)) < \infty$. One of the crucial points is that $H_1$ commutes with $S_0$. Therefore, for any $h\in H_1$ and $s\in S_0$, we have that
\begin{align*}
\|\rho(s)c(h)-c(h)\| &\leq \|c(sh) -c(s)-c(h)\|+\Delta \\
&\leq \|c(hs)-c(h)\| +M+\Delta \\
& \leq \| \rho(h)c(s)\|+M+2\Delta \leq 2(M+\Delta).
\end{align*}
Another point is that the pairs $G_1> L$ and $G_2> H_2$ are isomorphic copies of the pair $\mathrm{E}(n,A)\ltimes A^n > A^n$ (note that $\gamma'$ should be changed to $(\gamma'^{-1})^{t}$ to construct a concrete group isomorphism for the first pair, but that $\mathrm{E}(n,A)$ and $\mathrm{E}(n,A) \cap S$ are, as sets, stable under taking this operation). Therefore, by assumption, the minimum $\tilde{\kappa}$ of $\tilde{\kappa}(G_1,L, S_0\cap G_1,\rho\mid_{G_1})$ and $\tilde{\kappa}(G_2,H_2, S_0\cap G_2,\rho\mid_{G_2})$ is strictly positive (see Definition~~\ref{def=ffpp} for these constants). Hence, $\sup_{w \in L} \| \rho(w) c(h)-c(h)\| \leq 2\tilde{\kappa}^{-1} (M+\Delta)$ and $\sup_{w \in H_2} \| \rho(w) c(h)-c(h)\| \leq 2\tilde{\kappa}^{-1} (M+\Delta)$. The third point is that $H\subset LH_2LH_2$. Hence, for any $h\in H_1$, we have that
\[
\sup_{w \in H} \| \rho(w) c(h)-c(h)\|\leq 8 \tilde{\kappa}^{-1}(M+\Delta),
\]
or equivalently, $\mathcal{N}(c(h)) \leq 8 \tilde{\kappa}(M+\Delta)$ for all $h \in H_1$. We can apply the same reasoning with $H_1$ replaced by the additive group sitting at the $(i,n+2)$-th entry for $1 \leq i \leq n+1$. At the end we get
\[
\sup_{c \in H} \mathcal{N}(c(h)) \leq (n+1) \tilde{\kappa}(8M+9\Delta),
\]
which verifies our assertion above.

Secondly we claim that, in fact, $\mathcal{N}(c(\cdot))$ is bounded on the whole group $G$. A key step in the proof of this claim is the identity $(\gamma,0)(I_{n+1},u)=(I_{n+1},\gamma u) (\gamma,0)$, where we write $g=(\gamma,u)$ where $\gamma\in \mathrm{E}(n+1,A)$ and $u\in A^{n+1}$. We identify $v \in A^{n+1}$ with $(I_{n+1},v) \in H$. Then, from the identity above and the quasi-$1$-cocycle relation, for any $\gamma \in \mathrm{E}(n+1,A)$ and $u\in A^{n+1}$, we have that
\begin{equation}\label{eq:cocycle_conjugation}
\|c(\gamma u) -\rho((\gamma,0))c(u) -(I- \rho(\gamma u))c((\gamma,0))\|\leq 2\Delta. 
\end{equation}
By passing to $\mathcal{N}$, using that $\mathcal{N}(x) \leq 2\|x\|$ and that $\mathcal{N}$ is $\rho(G)$-invariant, we obtain that 
\begin{align*} \mathcal{N}(\rho(\gamma u)c((\gamma,0))-c((\gamma,0)) ) &\leq 4\Delta + \mathcal{N}(c(\gamma u)) + \mathcal{N}(c(u))\\ 
& \leq 2(n+1)\tilde{\kappa}^{-1}(8M+11\Delta)
\end{align*}
by the boundedness of $\mathcal{N}(c(\cdot))$ on $H$, as we have showed in the first step. Denote $M'= 2(n+1)\tilde{\kappa}^{-1}(8M+11\Delta)$. Fix $\gamma \in \mathrm{E}(n+1,A)$ and move $u\in H$ arbitrarily. Then
\[
\sup_{w\in H} \mathcal{N}(\rho(w) c((\gamma,0))-c((\gamma,0)) )\leq M'.
\]
Now fix $w \in H$ and denote $x_n = \frac{1}{n} \sum_{i=0}^{n-1} \rho(iw) c((\gamma,0))$, where $iw = w+\dots+w$ ($i$ times). By using that $\mathcal{N}$ is a seminorm, we have $\mathcal{N}(x_n - c((\gamma,0))) \leq M'$. In particular, $\|(I-\rho(w)) (x_n - c((\gamma,0)))\| \leq M'$. This implies that
\begin{align*}
\|(I-\rho(w)) c((\gamma,0))\| &\leq M' + \|(I-\rho(w)) x_n\| \\
&= M' + \frac 1 n \|c((\gamma,0)) - \rho(nw) c((\gamma,0))\|\\
&\leq M' +\frac{2}{n}\|c((\gamma,0))\|
\end{align*}
By taking $n \to \infty$, we obtain that $\|(I-\rho(w)) c((\gamma,0))\| \leq M'$ for any $w\in H$, and hence that $\mathcal{N}(c((\gamma,0))) \leq M'$, as requested. From this, it is easy to see that $\mathcal{N}(c(\cdot))$ is bounded on the whole $G$, as we claimed. In what follows, we only need the boundedness on $\mathrm{E}(n+1,A)$.

Finally, we will prove that $c$ itself is bounded on $H$. By \eqref{eq:cocycle_conjugation} and what we just proved in the second step, we conclude that for any $\gamma \in \mathrm{E}(n+1,A)$ and any $u\in H$,
\[ \|c(\gamma u) \| \leq M' + 2\Delta + \|c(u)\|.\]
The final observation is that every element $v\in H\simeq A^{n+1}$ can be written as the sum of (at most) $n+1$ elements of the form $\gamma u$, where $\gamma \in \mathrm{E}(n+1,A)$ and $u \in S\cap H$. This leads us to the inequality
\[
\sup_{v\in H}\| c(v)\| \leq (n+1)(M'+M+3\Delta)\leq (n+1)^2\tilde{\kappa}^{-1}(17M+25\Delta),
\]
which completes our proof.
\end{proof}
We also state the following easy observation for the proof of Theorem~\ref{thm=ffpp}.
\begin{lem}\label{lem=easy}
Let $A$ be an associative and unital topological ring that is compactly generated, let $X$ be a Banach space and $n \geq 2$. Assume that $\mathrm{E}(n,A)$ has property (T$_X$). Then the pair $\mathrm{E}(n,A)\ltimes A^n > A^n$ has weak relative property (T$_X$).
\end{lem}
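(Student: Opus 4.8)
The plan is to reduce weak relative property~(T$_X$) for the pair $\mathrm{E}(n,A)\ltimes A^n > A^n$ to property~(T$_X$) of $\Gamma:=\mathrm{E}(n,A)$, using only the commutation relation $[e_{i,j}(a),e_{j,k}(a')]=e_{i,k}(aa')$. Let $Q$ be a symmetric compact generating set of $A$ with $1\in Q$, write $H:=A^n$ and $G:=\Gamma\ltimes H$, and realize $G$ inside $\mathrm{E}(n+1,A)$ with $\Gamma$ in the upper left corner and $H$ as the first $n$ entries of the last column, so that $S:=\{e_{i,j}(q)\mid 1\le i\ne j\le n,\ q\in Q\}\cup\{e_{i,n+1}(q)\mid 1\le i\le n,\ q\in Q\}$ is a compact generating set of $G$; the constants appearing below will not depend on this choice, by the remark at the end of Section~\ref{sec=properties}. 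Fix an isometric representation $\rho\colon G\to O(X)$ and a vector $\xi\in X$, and set $c:=\sup_{s\in S}\|\rho(s)\xi-\xi\|$. The goal is to bound $\sup_{h\in H}\|\rho(h)\xi-\xi\|$ by a constant (allowed to depend on $\rho$) times $c$.

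First I would apply property~(T$_X$) of $\Gamma$ to the restriction $\sigma:=\rho|_\Gamma$: the induced representation of $\Gamma$ on $X/X^{\sigma(\Gamma)}$ has no almost invariant vectors, so there is an $\epsilon=\epsilon(\rho)>0$ with $\|\bar\xi\|_{X/X^{\sigma(\Gamma)}}\le\epsilon^{-1}\sup_{s\in S\cap\Gamma}\|\sigma(s)\xi-\xi\|\le\epsilon^{-1}c$. Hence for every $\delta>0$ there is a $\Gamma$-invariant vector $\xi_0\in X^{\sigma(\Gamma)}$ with $\|\xi-\xi_0\|<\epsilon^{-1}c+\delta$, and it suffices to control $\|\rho(h)\xi_0-\xi_0\|$ for $h\in H$, since $\|\rho(h)\xi-\xi\|\le\|\rho(h)\xi_0-\xi_0\|+2\|\xi-\xi_0\|$.

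The core step is a commutator estimate for the $\Gamma$-fixed vector $\xi_0$. Given $1\le i\le n$ and $a\in A$, choose an index $j\ne i$ with $1\le j\le n$ (possible since $n\ge 2$) and write the elementary matrix $h:=e_{i,n+1}(a)\in H$ as $h=[g,t]$ with $g:=e_{i,j}(a)\in\Gamma$ and $t:=e_{j,n+1}(1)\in S\cap H$; this is legitimate because $i$, $j$, $n+1$ are distinct. From the identity $ht=gtg^{-1}$ together with $\rho(g)\xi_0=\rho(g^{-1})\xi_0=\xi_0$ we get $\rho(h)\rho(t)\xi_0=\rho(g)\rho(t)\xi_0$, and since $h$ and $t$ commute in the abelian group $H$ and $\rho(t)$ is an isometry,
\[
\|\rho(h)\xi_0-\xi_0\|=\|\rho(h)\rho(t)\xi_0-\rho(t)\xi_0\|=\|\rho(g)(\rho(t)\xi_0-\xi_0)-(\rho(t)\xi_0-\xi_0)\|\le 2\|\rho(t)\xi_0-\xi_0\|,
\]
while $\|\rho(t)\xi_0-\xi_0\|\le 2\|\xi-\xi_0\|+\|\rho(t)\xi-\xi\|\le 2\epsilon^{-1}c+2\delta+c$. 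Writing a general $h\in H\cong A^n$ as a product of $n$ one-coordinate elementary matrices $e_{i,n+1}(v_i)$, summing these estimates, and letting $\delta\to 0$ yields $\sup_{h\in H}\|\rho(h)\xi-\xi\|\le\big((4n+2)\epsilon^{-1}+2n\big)c$, i.e.\ $\tilde{\kappa}(G,H,S,\rho)\ge\big((4n+2)\epsilon^{-1}+2n\big)^{-1}>0$, which is exactly weak relative property~(T$_X$) for $G>H$.

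I do not expect a genuine obstacle here: once property~(T$_X$) of $\Gamma$ is invoked, everything is elementary. The only points requiring a little care are bookkeeping ones: checking that $S$ genuinely generates $G$ (this uses the semidirect product structure) and that the positivity of the constants is independent of the generating set; and noting that $\epsilon$, and therefore $\tilde{\kappa}$, is permitted to depend on $\rho$, in accordance with the definition of weak relative property~(T$_X$) in Definition~\ref{def=ffpp}. The commutator identity itself uses only the three distinct indices $i$, $j$, $n+1$, so it remains valid for all $n\ge 2$.
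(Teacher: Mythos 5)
Your argument is correct, but it takes a genuinely different route from the paper. The paper never passes to an exactly $\Gamma$-invariant vector: it only uses the \emph{weak} form of property (T$_X$) for $\mathrm{E}(n,A)$ (which follows from property (T$_X$) via the inequality $2\tilde{\kappa}\geq\kappa$, valid for any $X$), applied to the displacement function $b(g)=\rho(g)\xi-\xi$ of the original vector $\xi$; it then combines the subadditivity $\|b(g_1g_2)\|\leq\|b(g_1)\|+\|b(g_2)\|$ with the observation that every element of $A^n$ is a product of boundedly many elements of $\mathrm{E}(n,A)\cdot S\cdot \mathrm{E}(n,A)$, giving $\tilde{\kappa}_2\geq \bigl(n(2\tilde{\kappa}_1^{-1}+1)\bigr)^{-1}$. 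You instead use the full strength of (T$_X$): the quotient-norm bound $\|\bar\xi\|\leq\epsilon^{-1}c$ together with the fact that the quotient norm is the distance to the closed subspace $X^{\sigma(\Gamma)}$ produces an honest $\Gamma$-fixed vector $\xi_0$ near $\xi$, and then the exact commutator identity $e_{i,n+1}(a)=[e_{i,j}(a),e_{j,n+1}(1)]$ evaluated at $\xi_0$ bounds $\|\rho(e_{i,n+1}(a))\xi_0-\xi_0\|$ by twice a generator displacement; all steps (the identity $ht=gtg^{-1}$, the isometry trick with $\rho(t)$, the transfer back to $\xi$, the explicit constant $((4n+2)\epsilon^{-1}+2n)^{-1}$) check out. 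What each approach buys: yours is slightly slicker and yields a clean explicit constant in terms of the Kazhdan-type constant $\epsilon$ of $\Gamma$, while the paper's proof works under the formally weaker hypothesis of weak property (T$_X$) for $\mathrm{E}(n,A)$, avoids choosing an auxiliary vector, and runs in the same displacement/subadditivity formalism as the quasi-cocycle estimates of Proposition \ref{prp=fromttoffpp}. One shared caveat (present in the paper's proof as well): for small $n$ and general $A$ the set $\{e_{i,j}(q)\mid i\neq j\leq n,\ q\in Q\}$ need not topologically generate $\mathrm{E}(n,A)$, so one should allow $S$ to contain a genuine compact generating set of $\mathrm{E}(n,A)$; this does not affect your commutator step, which only needs $e_{j,n+1}(\pm 1)\in S$.
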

\begin{proof}
In a similar way as in the proof of Proposition \ref{prp=fromttoffpp}, take a natural embedding of $\mathrm{E}(n,A)\ltimes A^n$ into $\mathrm{E}(n+1,A)$ that sends $\mathrm{E}(n,A)$ to the upper left corner of $\mathrm{E}(n+1,A)$ and $A^n$ to the first $n$ entries of the $(n+1)$-th column. We identify $\mathrm{E}(n,A)$ and $A^n$ with their respective images under this embedding. Let $Q$ be a compact generating set of $A$ that contains $1$. Let $S$ be defined as \linebreak $S:=\{e_{i,j}(q) \in \mathrm{E}(n+1,A) \mid i \ne j,q\in Q\}\cap \left(\mathrm{E}(n,A)\ltimes A^n \right)$. In what follows, we will show that for any isometric representation $\rho$ of $\mathrm{E}(n,A)\ltimes A^n$ on $X$, 
\[
\tilde{\kappa}_2 \geq  \frac{1}{n(2\tilde{\kappa}_1^{-1}+1)},
\]
where
\begin{align*}
 \tilde{\kappa}_1 &:=\tilde{\kappa}(\mathrm{E}(n,A), \mathrm{E}(n,A), S\cap \mathrm{E}(n,A),\rho\mid_{\mathrm{E}(n,A)}), \textrm{ and}\\
 \tilde{\kappa}_2 &:=\tilde{\kappa}(\mathrm{E}(n,A)\ltimes A^n, A^n, S,\rho).
\end{align*}
This assertion will immediately prove the lemma.

Fix such a $\rho$ and take an arbitrary $\xi \in X$. We define $b\colon \mathrm{E}(n,A)\ltimes A^n\to X$ by $b(g):=\rho(g)\xi -\xi$. This defines a $1$-cocycle into $\rho$ (in fact, a $1$-coboundary into $\rho$), and hence $b$ satisfies the cocycle relation $b(g_1g_2)=b(g_1)+\rho(g_1)b(g_2)$ for all $g_1,g_2 \in \mathrm{E}(n,A)$. In particular, by the triangle inequality, for every $g_1,g_2 \in \mathrm{E}(n,A)\ltimes A^n$, we have $\|b(g_1g_2)\| \leq \|b(g_1)\| +\|b(g_2)\|$. 

By the definition of $\tilde{\kappa}_1$, we have that 
\[
\tilde{\kappa}_1 \sup_{\gamma \in \mathrm{E}(n,A)} \|b(\gamma)\| \leq  \sup_{s\in S\cap \mathrm{E}(n,A)} \|b(s)\| \leq \sup_{s\in S} \|b(s)\|.
\]
Note that $\tilde{\kappa}_1>0$ by assumption. Then, by the triangle inequality, 
\[
\sup_{ h'\in \mathrm{E}(n,A)\cdot S \cdot \mathrm{E}(n,A)} \|b(h')\| \leq  (2\tilde{\kappa}_1^{-1}+1) \sup_{s\in S} \|b(s)\|.
\]
Here $\mathrm{E}(n,A)\cdot S \cdot \mathrm{E}(n,A)$ denotes the product subset in $\mathrm{E}(n,A)\ltimes A^n$. 

Finally, note that, as in the final observation in proof of Proposition~\ref{prp=fromttoffpp}, any $h\in A^n$ may be be written as the product of (at most) $n$ elements in the set $\mathrm{E}(n,A)\cdot S \cdot \mathrm{E}(n,A)$. Therefore, we conclude that
\[
\sup_{h \in A^n} \|b(h)\| \leq  n(2\tilde{\kappa}_1^{-1}+1) \sup_{s\in S} \|b(s)\|,
\]
which verifies our assertion above.
\end{proof}
From Lemma~\ref{lem=easy} we can in fact conclude weak relative property (T$_X$) for the pair $\mathrm{E}(n,A)\ltimes A^n > \mathrm{E}(n,A)\ltimes A^n$.
\begin{proof}[Proof of Theorem~\ref{thm=ffpp}]
Let $\beta < \frac{1}{2}$, and let $X$ be a Banach space satisfying \eqref{eq=d_kX_grows_slowly}. Instead of appealing to strong property (T), we employ property (T$_X$). As we argued after Definition~\ref{def=ffpp} it follows from Theorem \ref{thm=strongt} that for such an $X$, there exists an $N'$ such that $\mathrm{SL}(N',\mathbb{R})$ has property (T$_X$). In the case that $X$ is reflexive, the proof of this assertion is considerably less involved than the proof of the full result of Theorem~\ref{thm=strongt} and follows from the work of \cite{delaatdelasalle2} and using the Howe-Moore property from \cite{veech} (see Remark \ref{rem=remstrongt}). Even if $X$ is not reflexive, the proof is simpler, because here we only have to deal with isometric representations. 

Hence, it follows that $\mathrm{SL}(N',\mathbb{R})$ has property (T$_{X}$). Then, by Lemma~\ref{lem=easy}, the pair $\mathrm{SL}(N',\mathbb{R})\ltimes \mathbb{R}^{N'}> \mathbb{R}^{N'}$ satisfies weak relative property (T$_{X}$). 

By applying Proposition~\ref{prp=fromttoffpp} for $A=\mathbb{R}$, it follows that the pair $\mathrm{SL}(N'+1,\mathbb{R})\ltimes \mathbb{R}^{N'+1}> \mathbb{R}^{N'+1}$ has relative property (FF$_X$). This implies that for every isometric representation $\rho \colon \mathrm{SL}(N'+2,\mathbb{R}) \to O(X)$ and any quasi-1-cocycle $c\colon \mathrm{SL}(N'+2,\mathbb{R})\to X$ into $\rho$, there exists a constant $M>0$ such that
\[
\sup\left\{\|c(g)\|\ \middle| \ g \in \bigcup_{i\ne j, 1\leq i\leq N'+2, 1\leq j\leq N'+2}\{e_{i,j}(a) \mid a\in \mathbb{R}\} \right\} \leq M.
\]
In order to see this, embed $\mathrm{SL}(N'+1,\mathbb{R})\ltimes \mathbb{R}^{N'+1}$ into $\mathrm{SL}(N'+2,\mathbb{R})$ in several ways (more precisely, choose several different entries among $\{1,2,\ldots ,N'+2\}$). 

Finally, observe that $\mathrm{SL}(N'+2,\mathbb{R})$ is boundedly generated by elementary matrices in $\mathrm{Mat}(N'+2,\mathbb{R})$, i.e.~there exists an integer $k=k(N'+2)$ such that $\mathrm{SL}(N'+2,\mathbb{R})=\left(\bigcup_{i\ne j} \{e_{i,j}(a)\mid a\in \mathbb{R}\}\right)^k$. This easily follows because $\mathbb{R}$ is a field. We conclude that for $c$ as above, 
\[
\sup_{g\in \mathrm{SL(N'+2,\mathbb{R})}} \|c(g)\| \leq kM+(k-1)\Delta \quad (<\infty),
\]
where $\Delta$ is the defect of $c$. Therefore, $N:=N'+2$ satisfies our conditions.
\end{proof}
\begin{rem}\label{remark=steinberg}
The group $\mathrm{E}(n,A)$ can be viewed as the elementary Chevalley group $\mathrm{E}(\Phi,A)$ associated with the root system $\Phi$ of type $A_{n-1}$. If we consider the case where $A$ is commutative, then an assertion similar to Proposition~\ref{prp=fromttoffpp} holds for elementary Chevalley groups associated with reduced irreducible classical root systems of other types. However, if the root system is not simply-laced, the proof needs more care. From this, it is probably possible to extend the result of Theorem~\ref{thm=ffpp} to connected simple Lie groups of sufficiently large rank.

Another remark is that Proposition~\ref{prp=fromttoffpp} can be generalized to the case of (Kac--Moody--)Steinberg groups, in the sense of \cite[Subsection~6.1]{ershovjaikinzapirain}, with appropriate pairs of groups if the ring $A$ is discrete (cf.~\cite[Appendix~A]{ershovjaikinzapirain}). This version of the result is used by the second-named author in \cite{mimura15}.
\end{rem}
The proof of Theorem~\ref{thm=ffpp} implies the following corollary, which will play a key role in \cite{mimurasako}.
\begin{cor}\label{cor=ffppforSLnZ}
For every $\beta < \frac{1}{2}$, there exists an $N$ such that for any $X$ satisfying \eqref{eq=d_kX_grows_slowly}, the group $\mathrm{SL}(N,\mathbb{Z})$ has property (FF$_X$). 
\end{cor}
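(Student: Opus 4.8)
The plan is to repeat the proof of Theorem~\ref{thm=ffpp} \emph{verbatim} with the ring $A=\Z$ in place of $A=\R$; the two points at which that argument genuinely uses something about $\R$, and which therefore need to be replaced, are (a) property (T$_X$) for $\mathrm{SL}(N',\cdot)$ and (b) bounded generation by elementary matrices. Fix $\beta<1/2$ and a Banach space $X$ satisfying \eqref{eq=d_kX_grows_slowly}. By Theorem~\ref{thm=strongt} there is an $N'\geq 2$ such that $\mathrm{SL}(N',\R)$ has strong property (T), hence property (T$_X$), with respect to \emph{every} Banach space satisfying \eqref{eq=d_kX_grows_slowly}. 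As recalled in the proof of Corollary~\ref{cor=fpplattices}, the class of spaces satisfying \eqref{eq=d_kX_grows_slowly} for our fixed $\beta$ is stable under $Y\mapsto L^2(\Omega,\mu;Y)$; since property (T$_X$) with respect to a class of Banach spaces stable under passing to vector-valued $L^2$-spaces passes from a locally compact group to its lattices by the induction-of-representations argument (see \cite{baderfurmangelandermonod}), the lattice $\mathrm{SL}(N',\Z)=\mathrm{E}(N',\Z)$ has property (T$_X$). It is essential here to work with plain property (T$_X$) rather than with strong property (T): the latter is not known to pass to non-cocompact lattices such as $\mathrm{SL}(N',\Z)$, and this is precisely why the route to property (FF$_X$) through Proposition~\ref{prp=fromttoffpp} was designed to require only property (T$_X$).

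From here I would follow the proof of Theorem~\ref{thm=ffpp} with $A=\Z$. By Lemma~\ref{lem=easy}, the pair $\mathrm{SL}(N',\Z)\ltimes\Z^{N'}>\Z^{N'}$ has weak relative property (T$_X$); applying Proposition~\ref{prp=fromttoffpp} with $n=N'$ and $A=\Z$, the pair $\mathrm{SL}(N'+1,\Z)\ltimes\Z^{N'+1}>\Z^{N'+1}$ has relative property (FF$_X$). Embedding this pair into $\mathrm{SL}(N'+2,\Z)$ in the several natural ways (choosing different triples of indices among $\{1,\dots,N'+2\}$, as in the proof of Theorem~\ref{thm=ffpp}), one gets that for every isometric representation $\rho\colon\mathrm{SL}(N'+2,\Z)\to O(X)$ and every quasi-$1$-cocycle $c$ into $\rho$ there is a constant $M$ with $\|c(e_{i,j}(a))\|\leq M$ for all $i\ne j$ and all $a\in\Z$. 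The final step is the only one substantially different from the field case: whereas $\mathrm{SL}(N,\R)$ is trivially boundedly generated by elementary matrices, the fact that $\mathrm{SL}(N'+2,\Z)$ is boundedly generated by its elementary subgroups, i.e.\ $\mathrm{SL}(N'+2,\Z)=\big(\bigcup_{i\ne j}\{e_{i,j}(a)\mid a\in\Z\}\big)^{k}$ for some integer $k=k(N'+2)$, is a nontrivial theorem of Carter and Keller, valid for $N'+2\geq 3$. Granting it, the defect inequality for $c$ yields $\sup_{g\in\mathrm{SL}(N'+2,\Z)}\|c(g)\|\leq kM+(k-1)\Delta<\infty$, where $\Delta$ is the defect of $c$, exactly as at the end of the proof of Theorem~\ref{thm=ffpp}. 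Hence $N:=N'+2$ works.

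The step I expect to be the main obstacle is establishing property (T$_X$) for the non-cocompact lattice $\mathrm{SL}(N',\Z)$ for a possibly non-reflexive $X$ in the class \eqref{eq=d_kX_grows_slowly}: one must check that the induction step, producing a representation of $\mathrm{SL}(N',\R)$ on $L^2(\mathrm{SL}(N',\R)/\mathrm{SL}(N',\Z);X)$, stays inside the class \eqref{eq=d_kX_grows_slowly} and transports almost-invariant vectors correctly in the absence of reflexivity. The appeal to bounded elementary generation of $\mathrm{SL}(N,\Z)$ is the only other external input, and it is a standard black box.
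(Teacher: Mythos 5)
Your proposal is correct and follows essentially the same route as the paper: property (T$_X$) for $\mathrm{SL}(N',\R)$ (from Theorem~\ref{thm=strongt}), transferred to the lattice $\mathrm{SL}(N',\Z)$ by $L^2$-induction as in \cite{baderfurmangelandermonod}, then Lemma~\ref{lem=easy}, Proposition~\ref{prp=fromttoffpp} with $A=\Z$, the embeddings into $\mathrm{SL}(N'+2,\Z)$, and finally the Carter--Keller bounded elementary generation theorem. The paper is exactly as terse as you are about the induction step for possibly non-reflexive $X$, simply citing the $L^2$-induction method of \cite{baderfurmangelandermonod}, so the point you flag as a potential obstacle is not treated in more detail there either.
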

\begin{proof}
Property (T$_{X}$) for $\mathrm{SL}(N',\mathbb{Z})$ can be decuded from the corresponding assertion with $A=\mathbb{R}$ by the method of $L^2$-induction (see \cite{baderfurmangelandermonod}). From this, together with Lemma~\ref{lem=easy}, we obtain weak relative property (T$_X$) for the pair $\mathrm{SL}(N'+1,\mathbb{Z})\ltimes \mathbb{Z}^{N'+1}> \mathbb{Z}^{N'+1}$. Then we can apply Proposition~\ref{prp=fromttoffpp} for $A=\mathbb{Z}$ and proceed along the same lines as in the proof of Theorem \ref{thm=ffpp}. Note that in the case of $A = \mathbb{Z}$, unlike that of $A=\mathbb{R}$, the bounded generation (of $\mathrm{SL}(N,\mathbb{Z})$ by elementary matrices for $N\geq 3$) is highly non-trivial. This was proved by Carter and Keller \cite{carterkeller}, see also \cite{shalom}. 
\end{proof}
We finally point out that regarding property (FF$_{X}$), we do not know a direct argument of taking inductions to ($L^2$-integrable) lattices if $(X,\rho)$ is not the contragredient representation of an isometric representation on a dual Banach space. The problem lies in the deduction of the boundedness of the original quasi-cocycle from that of the induced quasi-cocycle. This issue becomes harmless if we treat fixed point properties instead of boundedness properties (and thus the proof of Corollary~\ref{cor=fpplattices} works without any issues). 
Burger and Monod \cite[Corollary~11]{burgermonod} resolved the problem above in the affirmative if the original isometric representation $(X,\rho)$ is separable and contragredient.

\begin{rem}\label{rem=ell1}
As announced in Remark~\ref{rem=fpffp}, we sketch the proof of the fact that $\mathrm{SL}(4,\mathbb{Z})$ has property (FF$_{\ell^1_0}$). First, we use the fact that the distance function on $\ell^1$, i.e.~the map $(\xi,\eta)\mapsto \|\xi-\eta\|_{\ell^1}$, is a conditionally negative defnite kernel (for definitions and details we refer to \cite[Appendix~C]{bekkadelaharpevalette}). It is well known that the pair $\mathrm{SL}(2,\mathbb{Z})\ltimes \mathbb{Z}^2 > \mathbb{Z}^2$ has relative property (T) (see \cite{bekkadelaharpevalette}), which is equivalent to ``relative property (FH)''. By combining these facts, we may conclude that for any affine isometric action of $\mathrm{SL}(2,\mathbb{Z})\ltimes \mathbb{Z}^2$ on any closed subspace of $\ell^1$, its $\mathbb{Z}^2$-orbits are bounded. From this, it is not difficult to see that the pair $\mathrm{SL}(2,\mathbb{Z})\ltimes \mathbb{Z}^2> \mathbb{Z}^2$ has weak relative property (T$_{\ell^1_0}$). Indeed, observe that the countable direct $\ell^1$-sums of $\ell^1_0$ is isometrically isomorphic to a closed subspace of $\ell^1$. If $\tilde{\kappa}$ in the current setting were zero for some isometric representation, then by taking an $\ell^1$-direct sum in an appropriate manner, we could construct an affine isometric action of $\mathrm{SL}(2,\mathbb{Z})\ltimes \mathbb{Z}^2$ on some closed subspace of $\ell^1$ whose $\mathbb{Z}^2$-orbits are unbounded. This is absurd, and we are done. Then, Proposition~\ref{prp=fromttoffpp} implies that the pair $\mathrm{SL}(3,\mathbb{Z})\ltimes \mathbb{Z}^3 > \mathbb{Z}^3$ has relative property (FF$_{\ell^1_0}$). The aforementioned bounded generation of Carter and Keller ends our proof.
\end{rem}

\subsection*{Acknowledgements}
We thank the referee for several valuable suggestions and remarks that 
improved the content of the article. We also thank the organizers of the 
International Conference on Banach methods in Noncommutative Geometry at 
Wuhan University, where the work that lead to this article was 
initiated.

\end{document}